\newcommand{\rs}{\mathrm{rs}}
\newcommand{\trs}{\theta\mathchar`-\mathrm{rs}}
\newcommand{\srs}{\mathrm{srs}}
\newcommand{\strs}{\theta\mathchar`-\mathrm{srs}}
\newcommand{\supp}{\mathrm{supp}}
\newcommand{\RHS}{\mathrm{RHS}}
\newcommand{\lan}{\langle}
\newcommand{\ran}{\rangle}
\newcommand{\ol}{\overline}
\newcommand{\mcO}{\mathcal{O}}
\newcommand{\mfp}{\mathfrak{p}}
\newcommand{\Z}{\mathbb{Z}}
\newcommand{\F}{\mathbb{F}}
\newcommand{\R}{\mathbb{R}}
\newcommand{\Q}{\mathbb{Q}}
\newcommand{\C}{\mathbb{C}}
\newcommand{\ra}{\rightarrow}
\newcommand{\Ra}{\Rightarrow}
\DeclareMathOperator{\tr}{tr}
\DeclareMathOperator{\GL}{GL}
\DeclareMathOperator{\SO}{SO}
\DeclareMathOperator{\SL}{SL}
\DeclareMathOperator{\Sp}{Sp}
\DeclareMathOperator{\cInd}{c-Ind}
\DeclareMathOperator{\Ind}{Ind}
\DeclareMathOperator{\Hom}{Hom}
\DeclareMathOperator{\Ad}{Ad}
\DeclareMathOperator{\Kl}{Kl}
\DeclareMathOperator{\Cent}{Cent}
\DeclareMathOperator{\diag}{diag}
\theoremstyle{plain}
\newtheorem{thm}{Theorem}[section]
\newtheorem*{thm*}{Theorem}
\newtheorem{prop}[thm]{Proposition}
\newtheorem{lem}[thm]{Lemma}
\newtheorem{cor}[thm]{Corollary}
\theoremstyle{definition}
\newtheorem{defn}[thm]{Definition}
\theoremstyle{remark}
\newtheorem{rem}[thm]{Remark}
\newtheorem*{claim*}{Claim}
\title{Endoscopic lifting of simple supercuspidal representations of $\SO_{2n+1}$ to $\GL_{2n}$}
\author{Masao Oi}
\address{Graduate School of Mathematical Sciences, 
the University of Tokyo, 3-8-1 Komaba, Meguro-ku, Tokyo 153-8914, Japan.}
\email{masaooi@ms.u-tokyo.ac.jp}
\begin{document}

\begin{abstract}
We compute the characters of simple supercuspidal representations of the twisted $\GL_{2n}(F)$ and $\SO_{2n+1}(F)$ for a $p$-adic field $F$.
Comparing them by the endoscopic character relation, we determine the liftings of simple supercuspidal representations of $\SO_{2n+1}(F)$ to $\GL_{2n}(F)$, 
under the assumption that $p$ is not equal to $2$.
\end{abstract}

\subjclass[2010]{Primary: 22E50; Secondary: 11F70, 11L05}

\maketitle

\section{Introduction}
Let $\mathbf{G}$ be a connected reductive group over a $p$-adic field $F$, and $G:=\mathbf{G}(F)$ its $F$-rational points.
Let $\Pi(G)$ be the set of equivalence classes of irreducible smooth representations of $G$,
and $\Phi(G)$ the set of $\widehat{G}$-conjugacy classes of $L$-parameters of $G$.
Here $\widehat{G}$ is the Langlands dual group of $\mathbf{G}$ and an $L$-parameter of $G$ is a homomorphism from the Weil-Deligne group $W_F\times \SL_2(\C)$ to the $L$-group ${}^L\mathbf{G}=\widehat{G}\rtimes W_F$ of $\mathbf{G}$.
Then the conjectural local Langlands correspondence predicts that there exists a natural map from $\Pi(G)$ to $\Phi(G)$ with finite fibers ($L$-packets).

For $\mathbf{G}=\GL_n$, it was established by Harris--Taylor \cite{MR1876802} and Henniart \cite{MR1738446}.
In this case, the map from $\Pi(G)$ to $\Phi(G)$ is bijective.

For quasi-split symplectic and orthogonal groups, the correspondence was established recently by Arthur in his book \cite{MR3135650}, under the assumption of the stabilization of twisted trace formulas for general linear groups and even orthogonal groups.
He characterized the $L$-packets for $L$-parameters of those groups via the $\mathit{endoscopic}$ $\mathit{character}$ $\mathit{relation}$, and constructed them in consequence of the comparison of stable trace formulas.

We explain the endoscopic character relation.
For simplicity, we consider the special orthogonal group $\SO_{2n+1}$, which is a main object in this paper.
This group is a twisted endoscopic group for $\GL_{2n}$, and there exists a natural $L$-embedding 
\[
\iota \colon {}^{L}\!\SO_{2n+1}=\Sp_{2n}(\C)\times W_F \hookrightarrow \GL_{2n}(\C)\times W_F={}^{L}\!\GL_{2n}.
\] 
Hence we can regard an $L$-parameter $\phi'$ of $\SO_{2n+1}(F)$ as an $L$-parameter $\phi$ of $\GL_{2n}(F)$ by composing it with $\iota$.
By using the local Langlands correspondence for $\GL_{2n}(F)$, we can get the representation $\pi$ of $\GL_{2n}(F)$ corresponding to $\phi$.
\[
\xymatrix{
\Pi\bigl(\GL_{2n}(F)\bigr) \ni \pi& \ar@{<~>}[r]^-{\text{LLC for $\GL_{2n}(F)$}} && W_F\times\SL_2(\C) \ar[rd]_-{\phi'} \ar[r]^-{\phi} & {}^{L}\!\GL_{2n}\\
\Pi\bigl(\SO_{2n+1}(F)\bigr) \supseteq \Pi_{\phi'}& \ar@{<~>}[r]^-{\text{LLC for $\SO_{2n+1}(F)$}} &&& {}^{L}\!\SO_{2n+1} \ar@{^{(}->}[u]_-{\iota} \\
}
\]
In this situation, $\pi$ is called the $\mathit{lifting}$ of the $L$-packet $\Pi_{\phi'}$ of $\phi'$.
When the $L$-parameter $\phi'$ is tempered, $\pi$ and $\Pi_{\phi'}$ satisfy the endoscopic character relation, which is an equality between the characters of representations in $\Pi_{\phi'}$ and the twisted character of $\pi$, and the correspondence $\pi\leftrightarrow\Pi_{\phi'}$ is characterized by this relation.

Therefore the local Langlands correspondence for $\SO_{2n+1}(F)$ is reduced to determining the liftings of representations of $\SO_{2n+1}(F)$ to $\GL_{2n}(F)$, and it is important to compute the characters of representations for this.

In this paper, we consider this problem for so-called $\mathit{simple}$ $\mathit{supercuspidal}$ representations of $\SO_{2n+1}(F)$.
These representations are supercuspidal representations which were introduced in \cite{MR2730575} and \cite{MR3164986}, and have been studied in the context of finding an explicit description of the local Langlands correspondence.

To state our main result, we explain some notations. 
From now on, we assume that the residual characteristic of $F$ is not equal to $2$.
Roughly speaking, simple supercuspidal representations are obtained by the compact induction of $\mathit{affine}$ $\mathit{generic}$ characters, 
and we can parametrize equivalence classes of such characters explicitly (see Section \ref{sec:ssc} for the details).
Then, for $\SO_{2n+1}(F)$, equivalence classes of simple supercuspidal representations of $\SO_{2n+1}(F)$ are parametrized by the finite set $k^{\times}\times\{\pm1\}$, where $k^{\times}$ is the multiplicative group of the residue field of $F$.
In a similar way to $\SO_{2n+1}(F)$, equivalence classes of self-dual simple supercuspidal representations of $\GL_{2n}(F)$ with trivial central character are parametrized by $k^{\times}\times\{\pm1\}$.
Under these parametrization, for $(a,\zeta)\in k^{\times}\times\{\pm1\}$ (resp.\ $(b,\xi)\in k^{\times}\times\{\pm1\}$), we denote by $\pi_{a, \zeta}$ (resp.\ $\pi'_{b, \xi}$) the corresponding simple supercuspidal representations of $\GL_{2n}(F)$ (resp.\ $\SO_{2n+1}(F)$). 
Then our main theorem on the liftings of simple supercuspidal representations of $\SO_{2n+1}(F)$ to $\GL_{2n}(F)$ is stated as follows:

\begin{thm}[Theorem \ref{thm:main}]\label{thm:intro}
We assume $p\neq2$. 
Let $b \in k^{\times}$ and $\xi \in \{\pm1\}$.
\begin{enumerate}
\item
The $L$-packet containing the simple supercuspidal representation $\pi'_{b, \xi}$ of $\SO_{2n+1}(F)$ is a singleton.
In particular, the character of $\pi'_{b, \xi}$ is stable.
\item
The lifting of the simple supercuspidal representation $\pi'_{b, \xi}$ of $\SO_{2n+1}(F)$ to $\GL_{2n}(F)$ is again simple supercuspidal, and given by $\pi_{2b, \xi}$.
\end{enumerate}
\end{thm}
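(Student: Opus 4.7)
The plan is to invoke the endoscopic character relation (ECR) directly. For a $\theta$-regular semisimple element $g\in\GL_{2n}(F)$ admitting a norm $\gamma\in\SO_{2n+1}(F)$, the ECR has the form
\[
\Theta^{\theta}_{\pi}(g) = \sum_{\pi'\in\Pi_{\phi'}}\Delta(\gamma,g)\,\Theta_{\pi'}(\gamma),
\]
so to identify the lifting of $\pi'_{b,\xi}$ and establish the singleton property simultaneously, I will compute both sides on a sufficiently rich family of test elements and match them.

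First, I would compute the character $\Theta_{\pi'_{b,\xi}}$ on a carefully chosen family of affine generic (elliptic regular, minimal positive depth) elements of $\SO_{2n+1}(F)$. Since $\pi'_{b,\xi}$ is compactly induced from an affine generic character $\chi$ of the pro-unipotent radical of an Iwahori, the Frobenius formula writes the character as a sum of values of $\chi$ over those double cosets which contribute, and for affine generic targets only a controlled set of cosets contributes. An entirely analogous computation produces the twisted character $\Theta^{\theta}_{\pi_{a,\zeta}}$ on $\theta$-affine generic elements of $\GL_{2n}(F)$. In both cases the resulting sums should collapse to exponential (Gauss/Kloosterman type) sums over the residue field $k$ in the parameters $(a,\zeta)$ and $(b,\xi)$.

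Next, I would set up the norm correspondence between $\theta$-regular affine generic classes in $\GL_{2n}(F)$ and affine generic classes in $\SO_{2n+1}(F)$, and compute the Langlands--Shelstad transfer factor $\Delta(\gamma,g)$ for matched pairs. The factor of $2$ in the map $(b,\xi)\mapsto(2b,\xi)$ is expected to surface here: the norm of a $\theta$-affine generic element with $\GL$-parameter $a$ should land in the $\SO$-affine generic class with parameter $a/2$, reflecting the quadratic relationship between the pinnings tied together by $\iota$. This is exactly the step where $p\neq 2$ enters in an essential way, since division by $2$ must stay within the affine generic stratum.

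Substituting the two character formulas and the transfer factor into the ECR, and varying the test element over affine generic classes, yields an identity of exponential sums indexed by $(b,\xi)$ on the one side and by $(a,\zeta)$ together with representations in $\Pi_{\phi'}$ on the other. A Fourier-type comparison over the finite parameter set $k^{\times}\times\{\pm1\}$ should force the right-hand side to reduce to a single term, yielding simultaneously (i) that $\Pi_{\phi'}=\{\pi'_{b,\xi}\}$ and hence the character is stable, and (ii) that the lifting is $\pi_{2b,\xi}$. The hardest part will be the explicit evaluation step: the two character formulas must be computed precisely enough to recognize them as the same exponential sum after the norm and transfer-factor substitutions, and any sign error, Haar measure miscalibration, or sloppy bookkeeping of cosets would destroy the match. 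Once that identification is secured, the Fourier inversion that produces the singleton conclusion is essentially automatic.
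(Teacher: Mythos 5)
Your overall strategy — compute characters on affine generic elements of both groups, compute the transfer factor and norm correspondence on the affine generic stratum, substitute into the endoscopic character relation, and finish with a Fourier argument over $k^{\times}$ — tracks the paper's skeleton. But two structural pieces are missing, and one of them is the heart of the proof.

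First, the singleton conclusion in part (1) does not come out of the character comparison and is not something a Fourier inversion over the parameter set can deliver. In the paper it is a consequence of Arthur's classification (Theorem \ref{thm:Arthur}): since $\pi_{2b,\xi}$ is supercuspidal, its $L$-parameter $\phi$ is irreducible as a representation of $W_F$, hence $\mathcal{S}_\phi$ is trivial and the $L$-packet $\widetilde{\Pi}_\phi$ of $\phi_H$ is automatically a singleton. That $\phi$ even factors through $\Sp_{2n}(\C)$ is a separate nontrivial input (Proposition \ref{prop:Mieda}). Your proposal never establishes the singleton; without it, the ECR expresses $\Theta_{\pi,\theta}$ as a \emph{sum} over the packet and the character data at affine generic elements alone do not separate the summands, so there is nothing to invert.

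Second, and more seriously, even after one knows $\widetilde{\Pi}_\phi=\{\pi_H\}$, the claim that $\pi_H$ is \emph{simple} supercuspidal requires work that your proposal skips entirely. Matching Kloosterman sums at affine generic elements is consistent with $\pi_H$ being simple supercuspidal, but it does not by itself rule out $\pi_H$ being a depth-zero supercuspidal representation; depth-zero supercuspidals can and do have nonzero characters on the affine generic stratum. The paper first shows $\pi_H$ has a nonzero $I_H^{++}$-fixed vector (Corollary \ref{cor:depthbound}), decomposes $\pi_H^{I_H^{++}}$ under $I_H^+/I_H^{++}$, and then argues by contradiction (Proposition \ref{prop:simplity}): if the resulting character $\eta$ of $I_H^+$ fails affine genericity, $\pi_H$ has depth zero; being generic, it is compactly induced from a hyperspecial maximal compact (DeBacker--Reeder), and the character formula then forces $\Theta_{\pi_H}(N'(1+\varphi_u))$ to be \emph{independent of $u$}, contradicting the non-constancy of the Kloosterman sum $\Kl^{n+1}_{2^{2(n-1)}u}(\psi;1,2,\ldots,2,1)$. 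Only after this elimination does the Fourier argument (Proposition \ref{prop:Fourier}) pin down $b$ and $\xi$. Your proposal's final ``Fourier inversion that produces the singleton conclusion is essentially automatic'' conflates these steps; the inversion identifies parameters \emph{within} the class of simple supercuspidals, it does not select simple supercuspidality itself, nor does it produce the singleton. You would need to add both the irreducibility-implies-trivial-$\mathcal{S}_\phi$ argument and the depth-zero elimination to close these gaps.
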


We remark that the $L$-parameters of simple supercuspidal representations of general linear groups have been described explicitly by the works of \cite{MR2148193} and \cite{Imai:2015aa}.
From Theorem \ref{thm:intro}, we know that the $L$-parameter of $\pi'_{b, \xi}$ is equal to that of $\pi_{2b, \xi}$.
Therefore we get an explicit description of the $L$-parameters of simple supercuspidal representations of $\SO_{2n+1}(F)$. 

We explain the outline of our proof.
To prove Theorem \ref{thm:intro}, we consider the converse direction.
That is, we descend simple supercuspidal representations of $\GL_{2n}(F)$ instead of lifting simple supercuspidal representations of $\SO_{2n+1}(F)$.
We first take the self-dual simple supercuspidal representation $\pi_{2b, \xi}$ of $\GL_{2n}(F)$ for $b\in k^{\times}$ and $\xi \in \{\pm1\}$, and show that the $L$-parameter $\phi$ of $\pi_{2b,\xi}$ factors through an $L$-parameter $\phi'$ of $\SO_{2n+1}(F)$, and that its $L$-packet $\Pi_{\phi'}$ is a singleton.
Then we show that the unique representation in $\Pi_{\phi'}$ is $\pi'_{b, \xi}$.

The first step follows from general results.
Since the representation $\pi_{2b,\xi}$ is self-dual, the $L$-parameter $\phi$ of $\pi_{2b,\xi}$ is also self-dual by a property of the local Langlands correspondence for $\GL_{2n}(F)$.
Hence the image of $\phi$ in $\widehat{\GL_{2n}}=\GL_{2n}(\C)$ is contained in either $\mathrm{O}_{2n}(\C)$ or $\Sp_{2n}(\C)$.
By a result in \cite{Mieda:2016}, the image is in $\Sp_{2n}(\C)=\widehat{\SO_{2n+1}}$, 
hence $\phi$ factors through an $L$-parameter $\phi'$ of $\SO_{2n+1}(F)$.
Then, by the local classification theorem in \cite{MR3135650} and the theorem for parametrizing supercuspidal representations of $\SO_{2n+1}(F)$ in \cite{MR3713922}, we know that the $L$-packet for $\phi'$ is a singleton consisting of a supercuspidal representation $\pi'$.

The key point of the proof is the second step to show that this representation $\pi'$ is in fact simple supercuspidal.
Our strategy is to compare characters of $\pi_{2b,\xi}$ and $\pi'$.
We first compute the twisted character of the simple supercuspidal representation $\pi_{2b,\xi}$ at special elements, which are called $\mathit{affine}$ $\mathit{generic}$ elements.
By using the twisted character formula for supercuspidal representations, we write these character values explicitly in terms of Kloosterman sums.
We remark that such a computation was already done by many people in the case of the standard $\GL_n$ (for example, \cite{MR3843393}), but our computation in this paper is more conceptual and valid for other groups.

Then, by the endoscopic character relation, we can express the character of $\pi'$ in terms of the twisted character of $\pi_{2b,\xi}$, which is already computed.
From this relation, we can show that $\pi'$ is either simple supercuspidal or depth-zero supercuspidal.
To eliminate the possibility that $\pi'$ is depth-zero supercuspidal, we next compute the character of depth-zero supercuspidal representations, and compare them.
Once we know that $\pi'$ is simple supercuspidal, we can show that $\pi'=\pi'_{b, \xi}$ easily by computing the characters of simple supercuspidal representations of $\SO_{2n+1}(F)$ and considering the Fourier transform of Kloosterman sums, and this completes the proof.

We remark that the liftings of simple supercuspidal representations of $\SO_{2n+1}(F)$ to $\GL_{2n}(F)$ were determined in \cite{MR3518182} under the assumption that $p\geq(2+e)(2n+1)$, where $e$ is the ramification index of $F$ over $\Q_p$.
Hence our results are new for odd primes less than $(2+e)(2n+1)$.
In his proof, he uses the stability of the characters of simple supercuspidal representations of $\SO_{2n+1}(F)$ in \cite{MR3402796}, M{\oe}glin's result about the stability of $L$-packets in \cite{MR3220932}, and special arguments for discrete series representations of $\GL_{2n}(F)$.
On the other hand, in our proof, the stability is naturally deduced from Arthur's theorem.

We also remark that our method based on the endoscopic character relation is basically valid for other classical groups.
For example, a quasi-split unitary group $\mathrm{U}_{E/F}(N)$ is a twisted endoscopic group for $\mathrm{Res}_{E/F}(\GL_N)$, where $E/F$ is a quadratic extension of $p$-adic fields.
The endoscopic classification of representations for these groups has also been established in \cite{MR3338302}, and we can apply the same argument for them.
We proved the same type result when $E$ is unramified over $F$ in \cite{Oi:2016aa}, and the other cases are in progress now.

Finally, we explain the organization of this paper.
In Section \ref{sec:ssc}, we review some fundamental properties about Iwahori subgroups and simple supercuspidal representations.
In addition, we introduce the notion of affine genericity for elements in Iwahori subgroups, which will play important roles in a comparison of characters.
In Section \ref{sec:char}, we compute the characters of simple supercuspidal representations of the twisted $\GL_{2n}(F)$ and $\SO_{2n+1}(F)$ at affine generic elements.
In Section \ref{sec:norm}, we investigate the norm correspondence for $\GL_{2n}$ and $\SO_{2n+1}$.
The norm correspondence is used to formulate the endoscopic character relation.
We determine norms of some special affine generic elements and compute their transfer factors.
In Section \ref{sec:main}, we first recall the endoscopic character relation in \cite{MR3135650}.
Then we determine the liftings of simple supercuspidal representations by combining the endoscopic character relation with the results in Sections \ref{sec:char} and \ref{sec:norm}.
In Appendix \ref{sec:Kl}, we list some properties about Gauss and Kloosterman sums.

\medbreak
\noindent{\bfseries Acknowledgment.}\quad
This paper is part of the master's thesis of the author.
He would like to thank his advisor Yoichi Mieda for his constant support and encouragement.
He always corrected the author's misunderstanding and led him to the right direction.
The author would not have been able to write this paper without his guidance.
He is grateful to Moshe Adrian, Naoki Imai, Tasho Kaletha, Teruhisa Koshikawa and Koji Shimizu for many helpful comments and pointing out a lot of mistakes and typos in the draft version of this paper.
He would like to thank Takahiro Tsushima for teaching him some techniques concerning the Fourier transform of Kloosterman sums.
Finally, He wishes to express his sincere gratitude to the referee for carefully reading this paper and giving him many constructive comments.

This work was carried out with the support from the Program for Leading Graduate Schools, MEXT, Japan.
This work was also supported by JSPS Research Fellowship for Young Scientists and KAKENHI Grant Number 17J05451.

\setcounter{tocdepth}{2}
\tableofcontents

\medbreak
\noindent{\bfseries Notation.}\quad
Let $p$ be an odd prime number.
We fix a $p$-adic field $F$.
We denote its ring of integers, its maximal ideal, and its residue field by $\mcO$, $\mfp$, and $k$, respectively.
We fix a uniformizer $\varpi$ of $F$.
Let $q$ be the order of $k$.
For $x \in \mcO$, $\bar{x}$ denotes the image of $x$ in $k$.

For an algebraic group $\mathbf{G}$ over $F$, we denote its $F$-rational points $\mathbf{G}(F)$ by $G$. 
For an algebraic group $\mathbf{T}$, we write $X^\ast(\mathbf{T})$ for its character group and $X_{\ast}(\mathbf{T})$ for its cocharacter group.

Throughout this paper, we fix an additive character $\psi$ on $F$ of level one.
Then its restriction $\psi|_{\mcO}$ to $\mcO$ induces a nontrivial additive character on $k$.
We denote it by $\psi$ again.

\section{Simple supercuspidal representations}\label{sec:ssc}
\subsection{Iwahori subgroups}
Let $\mathbf{G}$ be a connected split reductive group over $F$, and $\mathbf{Z}$ its center.
Let $\mathbf{T}$ be an $F$-split maximal torus in $\mathbf{G}$.
We denote the set of roots of $\mathbf{T}$ in $\mathbf{G}$ by $\Phi$, and the set of affine roots by $\Psi$.
For each root $a \in \Phi$, we denote by $\mathbf{U}_{a}$ the corresponding root subgroup of $\mathbf{G}$.
For each affine root $\alpha \in \Psi$, we denote by $U_\alpha$ the corresponding affine root subgroup of $G=\mathbf{G}(F)$.

We fix an alcove $C$ in the apartment $\mathcal{A}(\mathbf{G}, \mathbf{T})\cong X_{\ast}(\mathbf{T})\otimes_\Z\R$ of $\mathbf{T}$ in $\mathbf{G}$.
This determines an affine root basis $\Pi$ of $\Psi$ and the set $\Psi^+$ of positive affine roots.
We set the Iwahori subgroup associated to $C$ and its subgroups as follows: 
\begin{align*}
I &:= \lan T_0, U_\alpha \mid \alpha \in \Psi^+\ran,\\
I^+ &:= \lan T_1, U_\alpha \mid \alpha \in \Psi^+\ran \text{, and}\\
I^{++} &:= \lan T_1, U_\alpha \mid \alpha \in \Psi^+ \setminus \Pi\ran,
\end{align*}
where $T_0$ is the maximal compact subgroup of $T=\mathbf{T}(F)$, and 
\[
T_1 := \{t\in T_0 \mid \lambda(t)\in 1+\mfp \text{ for every } \lambda\in X^\ast(\mathbf{T})\}.
\]
These groups are the first three steps of the Moy-Prasad filtration of the Iwahori subgroup $I$ associated to the barycenter of the alcove $C$ (see \cite[Section 2.6]{MR3164986}).

We define subgroups of $T$ and $Z=\mathbf{Z}(F)$ by
\begin{align*}
T(q) &:= \{t\in T \mid t^q=t\} \text{ and } \\
Z(q) &:= \{t\in Z \mid t^q=t\},
\end{align*}
respectively.
These are sets of representatives of $T_0/T_1$ and $(Z\cap T_0)/(Z\cap T_1)$.

\begin{prop}[{\cite[Lemma 9.2]{MR2730575}}]\label{prop:I}
\begin{enumerate}
 \item The subgroup $I^+$ is normal in $I$, and we have
 \[
 I/I^+ \cong T_0/T_1 \cong T(q).
 \]
 \item The subgroup $I^{++}$ is normal in $I^+$, and we have
 \[
 I^+/I^{++} \cong \bigoplus_{\alpha \in \Pi } U_\alpha/U_{\alpha+1}.
 \]
\end{enumerate}
\end{prop}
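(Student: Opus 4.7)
The plan is to exploit the unique factorization of the Iwahori subgroup in terms of affine root subgroups together with the affine Chevalley commutator relations.

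For part (1), I would first invoke the Iwahori factorization: after fixing an ordering of $\Psi^+$, the product map $T_0 \times \prod_{\alpha \in \Psi^+} U_\alpha \to I$ is a bijection of sets, and likewise with $T_1$ in place of $T_0$ giving $I^+$. Normality of $I^+$ in $I$ then reduces to two observations: the torus $T_0$ normalizes each $U_\alpha$ via the corresponding character, and $T_1$ is characteristic in $T_0$. The canonical map $T_0 \hookrightarrow I \twoheadrightarrow I/I^+$ is surjective, and its kernel is $T_0 \cap I^+$; the uniqueness part of the factorization above forces this intersection to be $T_1$, yielding $I/I^+ \cong T_0/T_1 \cong T(q)$.

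For part (2), the essential input is the affine Chevalley commutator formula
\[
[U_\alpha, U_\beta] \subset \prod_{\substack{i,j \in \Z_{>0} \\ i\alpha+j\beta \in \Psi}} U_{i\alpha+j\beta},
\]
valid whenever $\alpha+\beta \neq 0$. When $\alpha, \beta \in \Pi$ are distinct simple affine roots, every combination $i\alpha+j\beta$ with $i,j \geq 1$ lies in $\Psi^+\setminus \Pi$, hence $[U_\alpha, U_\beta]\subset I^{++}$. Moreover $[T_1, U_\alpha] \subset U_{\alpha+1} \subset I^{++}$, because the action of $T_1$ on $U_\alpha$ is through a character taking values in $1+\mfp$, which shifts an affine root subgroup up by one in the filtration. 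Together with the defining inclusion $T_1 \subset I^{++}$ and the $U_\alpha$'s for $\alpha \in \Psi^+\setminus \Pi$, this shows $I^{++}$ is normal in $I^+$ and that $I^+/I^{++}$ is abelian. Combining the factorization of $I^+$ with the fact that modulo $I^{++}$ only the contributions from the simple affine roots survive (each truncated by $U_{\alpha+1}$), one obtains the desired isomorphism $I^+/I^{++} \cong \bigoplus_{\alpha \in \Pi} U_\alpha / U_{\alpha+1}$.

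The main obstacle I anticipate is the affine root combinatorics underlying part (2): one must verify that for distinct $\alpha, \beta \in \Pi$ and any $i, j \in \Z_{>0}$, the affine root $i\alpha+j\beta$ really lies in $\Psi^+\setminus \Pi$, which is the statement that simple affine roots are indecomposable in the positive cone. This is standard, but requires some care when two elements of $\Pi$ share the same finite root direction (as happens with the highest-root affine root in the untwisted case). Once these combinatorial facts are in place, and the identification of $U_\alpha$ and $U_{\alpha+1}$ with the corresponding Moy-Prasad steps is matched as in \cite[Section 2.6]{MR3164986}, both assertions follow cleanly from the Iwahori factorization and the commutator formula.
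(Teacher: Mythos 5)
The paper does not actually prove this proposition; it cites \cite[Lemma 9.2]{MR2730575} verbatim, so there is no in-paper argument to compare against. Your approach --- Iwahori factorization plus affine Chevalley commutator relations, identified with the Moy--Prasad filtration steps --- is the standard one underlying the cited result, and the argument is essentially correct. There is, however, one genuine gap to fill, and it is not quite the one you flag. The affine commutator formula you quote, $[U_\alpha, U_\beta] \subset \prod U_{i\alpha+j\beta}$, is valid only when the gradient (finite) parts of $\alpha$ and $\beta$ are not opposite; when they are opposite, $[U_\alpha, U_\beta]$ additionally acquires a torus contribution. For a distinct pair $\alpha, \beta \in \Pi$ this can happen only in type $A_1$ (e.g.\ $\GL_2$, where $\Pi = \{e_1 - e_2,\ e_2 - e_1 + 1\}$ has opposite gradients). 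There one must observe that since $\alpha + \beta$ has positive constant term, the torus contribution sits at depth $\geq 1$, i.e.\ in $T_1 \subset I^{++}$, so the conclusion survives; the same remark disposes of the analogous pairs $\alpha \in \Pi$, $\beta \in \Psi^+ \setminus \Pi$ of opposite gradient that arise when checking normality. Your parenthetical ``when two elements of $\Pi$ share the same finite root direction, as happens with the highest-root affine root in the untwisted case'' misidentifies this: the relevant phenomenon is \emph{opposite} gradients, and it occurs only in type $A_1$, not generically for the affine simple root attached to the negative highest root. Finally, a small notational point: the Iwahori factorization is a bijection $\prod_{a\in\Phi^+}U_{a+r_a}\times T_0\times\prod_{a\in\Phi^-}U_{a+r_a} \ra I$ with one factor per finite root $a$ at its minimal affine level $r_a$ --- exactly the form quoted in the proof of Lemma \ref{lem:key} --- rather than a product indexed by the infinite set $\Psi^+$.
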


We denote the image of $x$ under the map $I^+ \twoheadrightarrow \bigoplus_{\alpha \in \Pi } U_\alpha/U_{\alpha+1}$ by $(x_\alpha)_{\alpha \in \Pi}$, and call each $x_\alpha$ the $\mathit{simple}$ $\mathit{affine}$ $\mathit{component}$ of $x$.

\begin{defn}
\begin{enumerate}
 \item An element $x \in I^+$ is said to be $\mathit{affine}$ $\mathit{generic}$ if $x_\alpha$ is nonzero for every $\alpha \in \Pi$.
 \item A character $\chi \colon ZI^+ \ra \C^{\times}$ is called $\mathit{affine}$ $\mathit{generic}$ if $\chi|_{I^{+}}$ factors through the quotient $I^+/I^{++}$ and is nontrivial on $U_\alpha/U_{\alpha+1}$ for every $\alpha \in \Pi$.
\end{enumerate}
\end{defn}

Let $\mathbf{N}_{\mathbf{T}}$ be the normalizer of $\mathbf{T}$ in $\mathbf{G}$, and $\widetilde{W}$ the Iwahori-Weyl group of $\mathbf{T}$ defined by 
\[
\widetilde{W} := N_{T}/T_{0} = \mathbf{N}_{\mathbf{T}}(F)/T_{0}.
\]
Then we have the following proposition.

\begin{prop}[{\cite[Proposition 8 and Lemma 14]{MR2435422}}]\label{prop:IW}
\begin{enumerate}
 \item We have $G=IN_{T}I$, and the map $InI \mapsto \dot{n}$ induces a bijection $I\backslash G/I \cong \widetilde{W}$.
 \item
 There exists an exact sequence 
 \[
 1 \ra W_\mathrm{aff} \ra \widetilde{W} \xrightarrow{\kappa_G} X^{\ast}\bigl(Z(\widehat{G})\bigr) \ra 1,
 \]
 where $W_\mathrm{aff}$ is the affine Weyl group of $\mathbf{T}$, $Z(\widehat{G})$ the center of the Langlands dual group $\widehat{G}$ of $\mathbf{G}$, and $\kappa_G$ the Kottwitz homomorphism defined in \cite[Section 7]{MR1485921}.
Moreover the subgroup $\widetilde{\Omega} \subseteq \widetilde{W}$ consisting of the elements normalizing $I$ maps isomorphically to $X^{\ast}(Z(\widehat{G}))$, and we have $\widetilde{W} \cong W_\mathrm{aff} \rtimes \widetilde{\Omega}$.
\end{enumerate}
\end{prop}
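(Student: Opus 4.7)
The plan is to treat this as the standard Iwahori--Bruhat structure theorem for the split reductive group $G$, due in its classical form to Iwahori--Matsumoto and reformulated in the generality above by Bruhat--Tits and Haines--Rapoport. The approach I would take splits naturally into two pieces matching the two parts of the proposition: first prove the double coset decomposition using the action of $G(F)$ on its Bruhat--Tits building $\mathcal{B}(G,F)$, and then analyse the Iwahori--Weyl group $\widetilde{W}$ combinatorially to extract the exact sequence and the splitting.

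For part (1), the key observation is that $I$ is precisely the (pointwise) stabilizer of the chosen alcove $C$ in $\mathcal{B}(G,F)$, while $N(F)$ acts on the standard apartment $\mathcal{A}(G,T)$ inducing all translations in $X_\ast(T)$ together with the action of the finite Weyl group $W_0=N(F)/T(F)$. Given $g\in G(F)$, I would produce $n\in N(F)$ and $i_1,i_2\in I$ with $g=i_1ni_2$ as follows. The alcove $gC$ lies in some apartment $A$; by the building axioms there exists $i_2\in I$ with $i_2^{-1}A = \mathcal{A}(G,T)$ and $i_2^{-1}$ fixing $C$, so $gi_2^{-1}\cdot C$ lies in $\mathcal{A}(G,T)$. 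Since the stabilizer of $\mathcal{A}(G,T)$ in $G(F)$ meets the set of translations/Weyl-reflections transitively on alcoves of $\mathcal{A}(G,T)$ through $N(F)$, there is $n\in N(F)$ with $nC = gi_2^{-1}C$, hence $n^{-1}gi_2^{-1}\in\mathrm{Stab}(C)=I$. This gives $G(F)=IN(F)I$. For the bijection $I\backslash G(F)/I\cong \widetilde{W}$, if $InI=In'I$ then $nC$ and $n'C$ coincide as alcoves in $\mathcal{A}(G,T)$ (since $I\cap N(F)=T_0$ fixes $\mathcal{A}(G,T)$ pointwise up to $T_0$), so $n$ and $n'$ represent the same class in $N(F)/T_0=\widetilde{W}$.

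For part (2), I would exploit the canonical exact sequence
\[
1\to X_\ast(T)\to \widetilde{W}\to W_0\to 1,
\]
where $X_\ast(T)\hookrightarrow \widetilde{W}$ via $\lambda\mapsto \varpi^{\lambda}T_0$ and $W_0=N(F)/T(F)$. The affine Weyl group $W_\mathrm{aff}$ is the subgroup generated by reflections in the walls of $C$, which in these coordinates is $Q^\vee\rtimes W_0$ with $Q^\vee$ the coroot lattice. Hence $\widetilde{W}/W_\mathrm{aff}\cong X_\ast(T)/Q^\vee$, and for a split group this quotient is naturally isomorphic to $X^\ast(Z(\widehat{G}))$ via the perfect pairing $X_\ast(T)\times X^\ast(\widehat{T})\to \Z$; under this isomorphism the quotient map matches the Kottwitz homomorphism $\kappa_G$ of \cite{MR1485921}, which can be described in the split case simply as the valuation map composed with reduction modulo $Q^\vee$. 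For the splitting, I would identify $\widetilde{\Omega}$ as the stabilizer of the alcove $C$ inside $\widetilde{W}$: any element of $W_\mathrm{aff}$ that stabilizes $C$ is trivial (since $W_\mathrm{aff}$ acts simply transitively on alcoves), so the composition $\widetilde{\Omega}\hookrightarrow \widetilde{W}\to \widetilde{W}/W_\mathrm{aff}$ is injective; surjectivity then follows from picking, for each class in $X^\ast(Z(\widehat{G}))$, a translation representative whose action on $\mathcal{A}(G,T)$ is a length-zero automorphism of the affine Dynkin diagram preserving $C$. This yields both the exactness and the semidirect product decomposition $\widetilde{W}\cong W_\mathrm{aff}\rtimes \widetilde{\Omega}$.

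The main obstacle is really the first part: the Bruhat--Tits decomposition $G(F)=IN(F)I$ genuinely requires either the full apparatus of the affine building (transitivity on alcoves, existence of apartments through any two alcoves, pointwise stabilizer of the alcove being $I$) or, equivalently, a direct verification that $(I,N(F))$ forms a generalized Tits system in $G(F)$ so that one can invoke the abstract BN-pair Bruhat decomposition. Once this is granted, part (2) is essentially a combinatorial unwinding of the lattice picture in the apartment and is straightforward.
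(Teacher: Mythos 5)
The paper does not prove this proposition; it is quoted from the Haines--Rapoport appendix \cite{MR2435422} (Proposition~8 and Lemma~14 there), and the text right after the statement explicitly defers to the argument ``in the proof in \cite{MR2435422}.'' So there is no in-paper proof to compare against. Your sketch is the standard building-theoretic route, which is indeed what Haines--Rapoport do, so the approach is the right one.

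Two points deserve a caveat. First, you assert that $I$ ``is precisely the (pointwise) stabilizer of $C$'' in $\mathcal{B}(G,F)$. For a general split reductive group this is not correct: the group $I$ generated by $T_0$ and the positive affine root subgroups is the \emph{parahoric} subgroup attached to $C$, and in general it is a proper subgroup of the pointwise stabilizer. For $\GL_n$, e.g., the centre $Z(F)$ acts trivially on the reduced building, so the pointwise stabilizer of $\overline{C}$ is $Z(F)\cdot I$, not $I$; and the setwise stabilizer is larger still (it contains the $\Omega$-elements, which permute the vertices of $\overline{C}$). The correct characterization, and the one Haines--Rapoport work with, is that $I$ is the intersection of the (setwise) stabilizer of $C$ with the kernel of the Kottwitz homomorphism. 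If one naively takes $I$ to be the alcove stabilizer, the argument yields a double coset space of the form $(IZ(F)\Omega)\backslash G(F)/(IZ(F)\Omega)$, which bijects with a quotient of $\widetilde{W}$, not with $\widetilde{W}=N(F)/T_0$ itself; so the distinction is not cosmetic. Second, when you choose an apartment $A$, you only require $gC\subset A$, but you then want an isomorphism $A\to\mathcal{A}(G,T)$ fixing $C$; the building axioms give you an apartment containing \emph{both} $C$ and $gC$, and that is what you must invoke. Both points are repairable and are handled in the cited reference, but they are the places where the abstract building formalism requires care for non-semisimple, non-simply-connected $G$ (which includes $G=\GL_{2n}$, the group actually used in this paper).
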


By the same argument in the proof in \cite{MR2435422}, we can easily generalize this proposition as follows:

\begin{prop}\label{prop:I^+W}
We have $G=I^{+}N_{T}I^{+}$, and the map $I^+nI^+ \mapsto \dot{n}$ induces a bijection 
\[
I^+ \backslash G / I^+ \cong N_{T}/T_1.
\]
\end{prop}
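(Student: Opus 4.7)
The plan is to deduce this proposition from Proposition~\ref{prop:IW} by exploiting that $I^+$ is the pro-unipotent radical of $I$ and that $I/I^+ \cong T_0/T_1 \cong T(q)$ is finite of order coprime to $p$.

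For the surjectivity $G(F) = I^+ N(F) I^+$, I first lift the identification $I/I^+ \cong T_0/T_1$ to a factorization $I = I^+ \cdot T_0 = T_0 \cdot I^+$; the two orderings agree because $T_0$ normalizes $I^+$ (as it normalizes $T_1$ and each affine root subgroup). Substituting into $G(F) = I N(F) I$ from Proposition~\ref{prop:IW}(1) and using $T_0 \subseteq N(F)$ to absorb the outer $T_0$'s into the middle factor yields $G(F) = I^+ (T_0 N(F) T_0) I^+ = I^+ N(F) I^+$.

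For the bijection $N(F)/T_1 \cong I^+ \backslash G(F)/I^+$ via $n T_1 \mapsto I^+ n I^+$, well-definedness follows from $T_1 \subseteq I^+$ and surjectivity from the previous step. For injectivity, suppose $I^+ n_1 I^+ = I^+ n_2 I^+$ with $n_1, n_2 \in N(F)$. Passing to $I$-double cosets and invoking Proposition~\ref{prop:IW}(1) gives $n_2 = n_1 t$ for some $t \in T_0$; I must show $t \in T_1$. Writing $n_1 t = i_1 n_1 i_2$ with $i_1, i_2 \in I^+$ and setting $j := n_1^{-1} i_1 n_1 \in n_1^{-1} I^+ n_1$ yields $t = j \cdot i_2$. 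Since $t \in T_0 \subseteq I$ and $i_2 \in I^+ \subseteq I$, we conclude $j \in I \cap n_1^{-1} I^+ n_1$.

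The decisive step is the inclusion $I \cap n_1^{-1} I^+ n_1 \subseteq I^+$, which I argue structurally: $I^+$ is pro-$p$ (being built from $T_1 \cong (1+\mfp)^{\dim T}$ and pro-$p$ affine root subgroups), so $n_1^{-1} I^+ n_1$ is pro-$p$, and hence so is the intersection $I \cap n_1^{-1} I^+ n_1$. But every pro-$p$-subgroup of $I$ must lie in $I^+$, since its image in the prime-to-$p$ finite quotient $I/I^+ \cong T(q)$ is trivial. Thus $j \in I^+$, whence $t = j \cdot i_2 \in T_0 \cap I^+ = T_1$ (the last equality by uniqueness of the Iwahori factorization of $I^+$). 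The main point to check carefully is the pro-$p$-ness of $I^+$, but this is a standard consequence of its Moy--Prasad description.
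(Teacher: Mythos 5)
Your proof is correct. Both halves are handled cleanly: the surjectivity $G(F) = I^+ N(F) I^+$ from $I = I^+ T_0$ and $T_0 \subset N(F)$; and the injectivity via the pro-$p$ argument. The decisive step --- that $I \cap n^{-1} I^+ n \subseteq I^+$ because $n^{-1} I^+ n$ is pro-$p$ and any pro-$p$ subgroup of $I$ has trivial image in the prime-to-$p$ quotient $I/I^+ \cong T(q)$ --- is sound, and the conclusion $t \in T_0 \cap I^+ = T_1$ follows (the last equality can even be seen directly from $I/I^+ \cong T_0/T_1$ without invoking the Iwahori factorization).

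Worth noting, though: the paper does not spell out a proof; it simply remarks that the result follows ``by the same argument in the proof in \cite{MR2435422},'' i.e.\ by rerunning the Bruhat--Tits/BN-pair machinery of Haines--Rapoport with $I^+$ in place of $I$. Your approach is genuinely different in flavour: rather than re-proving the decomposition from scratch, you \emph{deduce} the $I^+$-version from the already-stated $I$-version (Proposition~\ref{prop:IW}) by a purely group-theoretic reduction using the pro-$p$ structure of $I^+$ and the coprime-to-$p$ quotient $I/I^+ \cong T(q)$. This buys economy --- no need to revisit the BN-pair axioms --- at the cost of leaning on the ambient result as a black box. The paper's route, while not written out, would be self-contained at the level of Bruhat--Tits theory. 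Both are standard and correct; yours is the more efficient one given what is already on the page.
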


We fix a set of representatives $\Omega \subseteq N_{T}$ of $\widetilde{\Omega} \subseteq \widetilde{W}=N_{T}/T_0$.
Let $N_G(I)$ and $N_G(I^+)$ be the normalizers of $I$ and $I^+$ in $G$, respectively.

\begin{lem}\label{lem:NI}
We have
\[
N_G(I)=N_G(I^+)=I\Omega.
\]
\end{lem}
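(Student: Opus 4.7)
The proof splits into two inclusions. For the easy direction $I\Omega \subseteq N_G(I) \cap N_G(I^+)$, it suffices to check separately that each of $I$ and $\Omega$ normalizes both $I$ and $I^+$. Clearly $I$ normalizes itself, and Proposition \ref{prop:I}(1) exhibits $I^+$ as the kernel of the quotient $I \twoheadrightarrow T(q)$, so $I^+$ is normal in $I$. For $\Omega$: the defining property of $\widetilde{\Omega}$ gives $\Omega \subseteq N_G(I)$, and since $\Omega \subseteq N(F)$ preserves $T$ (hence $T_1$) while its image in $\widetilde{W}$ stabilizes the alcove $C$ (hence permutes $\Psi^+$), conjugation by $\Omega$ also preserves the generating set of $I^+$, so $\Omega \subseteq N_G(I^+)$ as well.

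For the nontrivial inclusion $N_G(I) \subseteq I\Omega$, I would exploit the Bruhat-type decomposition of Proposition \ref{prop:IW}(1): given $g \in N_G(I)$, write $g = i_1 n i_2$ with $i_1, i_2 \in I$ and $n \in N(F)$. Then $n = i_1^{-1} g i_2^{-1}$ lies in $N_G(I) \cap N(F)$, so $\dot n \in \widetilde{\Omega}$ by Proposition \ref{prop:IW}(2). Hence $n \in T_0 \cdot \Omega \subseteq I \cdot \Omega$, and writing $n = t\omega$ with $t \in T_0$ and $\omega \in \Omega$ we obtain $g = (i_1 t)(\omega i_2 \omega^{-1})\omega \in I\Omega$ after using $\omega I \omega^{-1} = I$.

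The remaining inclusion $N_G(I^+) \subseteq I\Omega$ follows the same template, but uses the refined decomposition of Proposition \ref{prop:I^+W}: write $g = i_1 n i_2$ with $i_1, i_2 \in I^+$ and $n \in N(F)$, so that $n$ normalizes $I^+$. The main step, and the only real content of the lemma, is to deduce that the image $w$ of $n$ in $\widetilde{W}$ belongs to $\widetilde{\Omega}$. For this, I would use that conjugation by $n$ sends the affine root subgroup $U_\alpha$ to $U_{w\alpha}$, together with the fact that for $\beta \in \Psi$ one has $U_\beta \subseteq I^+$ if and only if $\beta \in \Psi^+$; thus $w$ must send $\Psi^+$ into itself, which, since $w$ is a bijection on affine roots, forces $w$ to stabilize $\Psi^+$ setwise and hence stabilize the alcove $C$, i.e.\ $w \in \widetilde{\Omega}$. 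The same rearrangement as in the previous step, now using $I^+ \subseteq I$ and $\omega I^+ \omega^{-1} = I^+$, then yields $g \in I\Omega$. Because this argument simultaneously shows $N_G(I^+) \subseteq I\Omega \subseteq N_G(I)$, the two normalizers coincide.
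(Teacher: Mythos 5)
Your proof is correct, and while it lands on the same skeleton (reduce to $n\in N(F)$ via the relevant double coset decomposition, then identify $\dot n$ with an element of $\widetilde{\Omega}$), your treatment of $N_G(I^+)$ diverges from the paper's in a way worth noting. The paper first proves $N_G(I)=N_G(I^+)$ and only then matches this common group with $I\Omega$: the inclusion $N_G(I)\subset N_G(I^+)$ is dispatched in one line because $I^+$ is the pro-unipotent radical of $I$ and hence characteristic, and the reverse inclusion $N_G(I^+)\subset N_G(I)$ is obtained, after reducing to $n\in N(F)$ via Proposition~\ref{prop:I^+W}, by the simple observation that $n$ normalizes $T$ and $I^+$, and $I=T_0I^+$ (Proposition~\ref{prop:I}(1)), so $n$ normalizes $I$. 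You instead bound $N_G(I^+)$ by $I\Omega$ directly, using the affine-root characterization ``$U_\beta\subseteq I^+\Leftrightarrow\beta\in\Psi^+$'' to show the Iwahori--Weyl image $w$ of $n$ maps $\Psi^+$ into itself, hence stabilizes $\Psi^+$ (using $w(-\alpha)=-w(\alpha)$), hence stabilizes the alcove and lies in $\widetilde{\Omega}$. Both routes are sound. The paper's is more economical, hiding the affine-root bookkeeping inside the cited facts $I=T_0I^+$ and ``pro-unipotent radical is characteristic''; yours makes the role of $\widetilde{\Omega}$ as the stabilizer of $\Psi^+$ explicit and establishes $N_G(I^+)\subseteq I\Omega$ without passing through $N_G(I)$ first, at the cost of having to justify that $U_\beta\not\subseteq I^+$ for $\beta\in\Psi^-$ and to complete the ``into implies onto'' step for the $w$-action on $\Psi^+$.
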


\begin{proof}
Since $I^+$ is the pro-unipotent radical of $I$, we have $N_G(I) \subseteq N_G(I^+)$.
We prove the other inclusion.
Let $g \in N_G(I^+)$.
It suffices to prove $I^+gI^+ \subseteq N_G(I)$.
By Proposition \ref{prop:I^+W}, we can replace $g$ with $n \in N_{T}$.
As $nI^+n^{-1}=I^+$ and $nT_{0}n^{-1}=T_{0}$, we have $n \in N_G(I)$. 
Hence $N_G(I)=N_G(I^+)$.

We next prove the second equality.
The inclusion $N_G(I) \supseteq I\Omega$ follows from the definition of $\Omega$.
Let $g \in N_G(I)$.
It suffices to prove $IgI \subseteq I\Omega$.
By Proposition \ref{prop:IW} (1), we may assume $g \in N_{T}$.
Then we have $g \in \Omega T_0$ by the definition of $\Omega$.
Hence $IgI\subseteq I\Omega$.
\end{proof}

The following lemma is a key to compute characters.

\begin{lem}\label{lem:key}
Let $y \in G$.
If $y$ satisfies $ygy^{-1} \in I$ for an affine generic element $g \in I^+$, then $y \in I\Omega$.
\end{lem}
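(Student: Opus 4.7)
My plan is to reduce the statement to the case $y = n \in N(F)$ via the Iwahori decomposition, and then read off the affine-simple-root components of $ngn^{-1}$ to pin down $\dot n$ inside $\widetilde{\Omega}$.

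First I would apply Proposition \ref{prop:IW}(1) to write $y = i_1 n i_2$ with $i_1, i_2 \in I$ and $n \in N(F)$, and set $g' := i_2 g i_2^{-1}$. Conjugation by $I$ preserves $I^+$, and the induced action on $I^+/I^{++} \cong \bigoplus_{\alpha \in \Pi} U_\alpha/U_{\alpha+1}$ factors through $T_0/T_1$, scaling each summand by a nonzero element of $k^\times$; hence $g'$ is again affine generic. Absorbing the outer conjugation by $i_1$, which stabilizes $I$, the problem reduces to showing that if $ng'n^{-1} \in I$ for $n \in N(F)$ and affine generic $g' \in I^+$, then $n \in I\Omega$. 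Since $\Omega \subset N_G(I)$ gives $I\Omega = \Omega I$, this will be enough to recover $y \in I\Omega$.

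Setting $w := \dot n \in \widetilde{W}$, I would next decompose $g' \equiv t \prod_{\alpha \in \Pi} u_\alpha \pmod{I^{++}}$ with $t \in T_1$ and each $u_\alpha \in U_\alpha$ representing a nonzero class in $U_\alpha/U_{\alpha+1}$. Conjugation by $n$ sends $T_1$ into $T_1$ and induces isomorphisms $U_\alpha \xrightarrow{\sim} U_{w\alpha}$ taking the nonzero class of $u_\alpha$ to a nonzero class in $U_{w\alpha}/U_{w\alpha+1}$. By the uniqueness of the Iwahori factorization of $I$ in terms of $T_0$ and the positive affine root subgroups (with any fixed convex ordering), the containment $ng'n^{-1} \in I$ then forces $w\alpha \in \Psi^+$ for every $\alpha \in \Pi$: a nonzero contribution in $U_{w\alpha}$ with $w\alpha \notin \Psi^+$ cannot be absorbed into $I$, while the other factors coming from $n I^{++} n^{-1}$ live in strictly deeper affine-root strata and so cannot cancel it.

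Finally I would invoke Proposition \ref{prop:IW}(2) to write $w = w_\mathrm{aff} \omega$ with $w_\mathrm{aff} \in W_\mathrm{aff}$ and $\omega \in \widetilde{\Omega}$. Since $\omega$ normalizes $I$ it preserves the alcove $C$ and hence permutes $\Pi$, so the condition $w(\Pi) \subset \Psi^+$ is equivalent to $w_\mathrm{aff}(\Pi) \subset \Psi^+$. In the Coxeter system $W_\mathrm{aff}$ this forces $w_\mathrm{aff} = 1$, because any nontrivial element admits a descent, i.e., a simple reflection $s_\alpha$ with $w_\mathrm{aff}(\alpha) \in -\Psi^+$. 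Therefore $w = \omega \in \widetilde{\Omega}$, i.e., $n \in \Omega T_0 \subset I\Omega$. The main obstacle will be the middle paragraph: rigorously justifying that the nonzero class of $nu_\alpha n^{-1}$ in $U_{w\alpha}/U_{w\alpha+1}$ cannot be cancelled by contributions from $nT_1 n^{-1}$ or from $n I^{++} n^{-1}$. This will require the convex-ordering argument, together with the observation that the simple affine roots $\alpha \in \Pi$ are minimal positive affine roots, so that their $w$-images sit in strictly shallower strata than the images of any affine root appearing in $I^{++}$.
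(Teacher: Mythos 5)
Your proposal has the right skeleton, and it matches the paper's overall strategy: reduce to $y = n \in N(F)$ via a double coset decomposition, show that the image $w$ of $n$ in $\widetilde{W}$ satisfies $w(\Pi) \subset \Psi^{+}$, and then conclude $n \in I\Omega$. Your final step is in fact a clean alternative to the paper's: where the paper runs the factor-by-factor argument for both $g$ and $g^{-1}$ to conclude $yIy^{-1} = I$ and then invokes Lemma \ref{lem:NI}, you instead pass directly to Coxeter combinatorics and use the descent property of $W_{\mathrm{aff}}$ to force $w_{\mathrm{aff}} = 1$. That is a genuine and somewhat more conceptual finish, and it works.

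The middle step, however, has a real gap — one you yourself flag. You decompose $g'$ only modulo $I^{++}$, i.e.\ $g' = t\prod_{\alpha} u_{\alpha}\cdot g''$ with $g''\in I^{++}$, and then want to argue that after conjugating by $n$ the contribution from $n g'' n^{-1}$ cannot interfere with the leading classes of the $n u_{\alpha} n^{-1}$. The justification you offer — that the $w$-images of $\Pi$ sit in ``strictly shallower affine-root strata'' than the $w$-images of roots generating $I^{++}$ — is false as a global statement: $w$ certainly need not preserve any height or depth ordering across different root directions. The claim is only true within a fixed root direction (if $\beta = \alpha + k$ with $k\geq 1$ then $w\beta = w\alpha + k$ is deeper than $w\alpha$), which is not enough, because after conjugation the factors are no longer sorted into positive/torus/negative form and re-sorting them produces commutator terms in yet other root directions which your argument does not control. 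The paper sidesteps all of this by using the \emph{exact} Bruhat--Tits factorization $g = \prod_{a\in\Phi^{+}}x_{a}\cdot t\cdot\prod_{a\in\Phi^{-}}x_{a}$ with $x_{a}\in U_{a+r_{a}}$, for which the multiplication map is a bijection onto $I$ ``in any order''. Conjugating that factorization by $n\in N(F)$ preserves it factor-by-factor (each $n x_{a} n^{-1}$ lives in a single root group, and $n t n^{-1}\in T_{0}$), so the uniqueness of the factorization immediately forces $n x_{a} n^{-1}\in I$ for every $a$, with no commutator or re-sorting subtleties. Affine genericity then says exactly that the factors indexed by $\Pi$ are nontrivial modulo the next filtration step, which is what yields $w(\Pi)\subset\Psi^{+}$. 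You should replace your ``mod $I^{++}$ plus convex ordering'' argument with this exact-factorization argument; after that, your Coxeter finish goes through.
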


\begin{proof}
Let $y \in G$ satisfying $ygy^{-1} \in I$ for an affine generic element $g \in I^+$.
Since affine genericity is preserved by $I^+$-conjugation, any element of $IyI^+$ satisfies the same condition as $y$.
Therefore, by Proposition \ref{prop:I^+W}, we may assume $y \in N_{T}$.

As $yIy^{-1}$ and $I$ have the same volume for a Haar measure of $G$, we only have to show $yIy^{-1}\subseteq I$.

We recall that the multiplication map 
\[
\prod_{a\in\Phi^+}\mathbf{U}_{a}(F)\times T \times\prod_{a\in\Phi^-}\mathbf{U}_{a}(F) \ra G
\]
is injective in any order (see \cite[2.2.3]{MR756316}), where $\Phi^{+}$ (resp.\ $\Phi^{-}$) is the set of positive (resp.\ negative) roots.
Moreover this induces a bijection 
\[
\prod_{a\in\Phi^+}U_{a+r_a}\times T_0\times\prod_{a\in\Phi^-}U_{a+r_a} \ra I
\]
in any order, where $r_a\in\Z$ is the smallest integer such that $a+r_a\in\Psi^+$ (see \cite[6.4]{MR0327923}).
By using this decomposition, we write 
\[
g=\prod_{a\in\Phi^+}x_a\cdot t \cdot \prod_{a\in\Phi^-}x_a,
\]
where $t\in T_0$ and $x_a\in U_{a+r_a}$ for each $a\in\Phi$.
Then, since $g$ is affine generic, for a simple affine root $\alpha=a+r_{a}\in\Pi$,
 $x_{a}$ belongs to $U_{a+r_{a}}$ but not to $U_{a+r_{a}+1}$.
Therefore a simple affine root $\alpha\in\Pi$ is the maximal affine root such that
\begin{itemize}
\item its gradient is $a$ and
\item $x_{a}$ belongs to $U_{\alpha}$.
\end{itemize}
Hence the affine root $y\alpha y^{-1}\in\Psi$ is the maximal affine root such that
\begin{itemize}
\item its gradient is $yay^{-1}$ and
\item $yx_{a}y^{-1}$ belongs to $U_{y\alpha y^{-1}}$.
\end{itemize}

On the other hand, we have
\[
ygy^{-1}=\prod_{a\in\Phi^+}yx_ay^{-1}\cdot yty^{-1} \cdot \prod_{a\in\Phi^-}yx_ay^{-1}.
\]
By the above uniqueness of expression, the assumption $ygy^{-1}\in I$ implies that $yx_ay^{-1}$ belongs to $U_{yay^{-1}+r_{yay^{-1}}}\subseteq\mathbf{U}_{yay^{-1}}(F)$ for every $a\in\Phi$.

Therefore, for every simple affine root $\alpha=a+r_{a}\in\Pi$, by the maximality of $y\alpha y^{-1}$, we get
\[
y\alpha y^{-1}\geq yay^{-1}+r_{yay^{-1}}\geq0.
\]
In particular, $y\alpha y^{-1}$ is a positive affine root for every simple affine root $\alpha\in\Pi$.
Hence $y\alpha y^{-1}$ is positive for every positive affine root $\alpha\in\Psi^{+}$.
This implies $yIy^{-1}\subseteq I$ (note that we have $yT_0y^{-1} =T_0$ since $y \in N_{T}$).
\end{proof}

\subsection{Simple supercuspidal representations}
Let $\chi$ be an affine generic character on $ZI^{+}$.
For $g\in G$ and a subgroup $J$ of $G$, we set $J^{g}:=g^{-1}Jg$.
We put
\[
 N_{G}(I^{+}; \chi) := \{n \in N_G(I^+) \mid \chi^n=\chi\},
\]
where $\chi^n$ is the character of $(ZI^+)^{n}=n^{-1}(ZI^{+})n=ZI^{+}$ defined by $\chi^n(g):=\chi(ngn^{-1})$.
This subgroup satisfies $ZI^+ \subseteq N_{G}(I^{+}; \chi) \subseteq I\Omega$ by Lemma \ref{lem:NI}.
For an irreducible constituent $\tilde\chi$ of $\cInd_{ZI^+}^{N_{G}(I^{+}; \chi)}\chi$, we define 
\[
\pi_{\tilde\chi} := \cInd_{N_{G}(I^{+}; \chi)}^{G} \tilde\chi.
\]

\begin{rem}\label{rem:ssc}
\begin{enumerate}
 \item If $\mathbf{G}$ is split, semisimple, and simply connected, then the group $\widetilde{\Omega}$ is trivial by Proposition \ref{prop:IW} (2).
 Hence we have $N_{G}(I^{+})=ZI$.
 By using $I=T(q)I^{+}$, we can check easily
 \[
 \{n \in ZI \mid \chi^n=\chi\}=ZI^{+}.
 \]
 Thus we get $N_{G}(I^{+}; \chi)=ZI^{+}$.
 \item In this paper, we will consider the cases of $\mathbf{G}=\GL_{N}$ and $\mathbf{G}=\SO_{2n+1}$.
 For these groups, the quotient $N_{G}(I^{+}; \chi)\twoheadrightarrow N_{G}(I^{+}; \chi)/ZI^+$ splits, and $\cInd_{ZI^+}^{N_{G}(I^{+}; \chi)}\chi$ decomposes as a direct sum of characters.
\end{enumerate}
\end{rem}

We next consider the decomposition of the compact induction of $\chi$ to $G$.
The following proposition is proved in \cite[Proposition 9.3]{MR2730575} for $\mathbf{G}$ which is split simple simply connected, and in {\cite[Proposition 2.4]{MR3164986}} for $\mathbf{G}$ which is semisimple and tamely ramified.
We explain the proof for the sake of completeness.

\begin{prop}\label{prop:ssc}
\begin{enumerate}
 \item We have a decomposition 
 \[
 \cInd_{ZI^+}^{G}\chi \cong \bigoplus_{\tilde\chi} \dim(\tilde\chi)\cdot\pi_{\tilde\chi},
 \]
 where the sum is over the set of irreducible constituents of $\cInd_{ZI^+}^{N_{G}(I^{+}; \chi)}\chi$.
 \item The representation $\pi_{\tilde\chi}$ is irreducible, hence supercuspidal.
 \item Let $(\chi', {\tilde\chi}')$ be another pair as above. Then, $\pi_{\tilde\chi}$ and $\pi_{\tilde\chi'}$ are equivalent if and only if $\chi^n=\chi'$ and ${\tilde\chi}^n\cong{\tilde\chi}'$ for some $n \in T_{0}\Omega$.
\end{enumerate}
\end{prop}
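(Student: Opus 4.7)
The plan is to apply Mackey theory, with Lemma~\ref{lem:key} supplying the essential control over intertwiners of the affine generic character $\psi$.

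For (1), induction in stages gives $\cInd_{ZI^+}^{G(F)}\psi \cong \cInd_{N(\psi)}^{G(F)} \cInd_{ZI^+}^{N(\psi)}\psi$. Since $N(\psi) \subset I\Omega$ by Lemma~\ref{lem:NI} and $I\Omega/ZI^+$ is finite, the inner induced representation is finite-dimensional and splits as $\bigoplus_\chi \dim(\chi)\cdot\chi$ over its irreducible constituents. Compact induction distributes over finite direct sums, yielding (1).

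For (2), the group $N(\psi)$ is open and compact modulo $Z$, so once $\pi_\chi = \cInd_{N(\psi)}^{G(F)}\chi$ is shown to be irreducible, supercuspidality follows automatically by the standard criterion for compact induction. Irreducibility reduces, by Mackey's criterion, to
\[
\Hom_{N(\psi)\cap gN(\psi)g^{-1}}(\chi, \chi^g) = 0 \quad \text{for every } g \in G(F) \setminus N(\psi).
\]
Suppose this Hom space is nonzero. Since $\chi$ occurs in $\cInd_{ZI^+}^{N(\psi)}\psi$, Frobenius reciprocity yields a nonzero element of $\Hom_{ZI^+ \cap gZI^+g^{-1}}(\psi, \psi^g)$; equivalently $\psi^g=\psi$ on $ZI^+ \cap gZI^+g^{-1}$. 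The key step is to deduce $g \in I\Omega$ from this: using Proposition~\ref{prop:I^+W} to write $g = i_1 n i_2$ with $n \in N(F)$ and $i_1, i_2 \in I^+$, one may replace $g$ by $n$. The image of $I^+ \cap n^{-1}I^+n$ in $I^+/I^{++} = \bigoplus_{\alpha\in\Pi} U_\alpha/U_{\alpha+1}$ must contain an affine generic element, since otherwise the non-triviality of $\psi$ on each affine simple root summand together with the intertwining equality would produce a contradiction. Lifting such an element to an affine generic $x \in I^+ \cap n^{-1}I^+n$ and applying Lemma~\ref{lem:key} with $y = n$ yields $n \in I\Omega$. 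Combined with $\psi^g=\psi$, this places $g$ in $N(\psi)$, a contradiction.

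For (3), any isomorphism $\pi_\chi \cong \pi_{\chi'}$ produces, via Mackey and Frobenius reciprocity, an element $n \in G(F)$ conjugating $(\psi, \chi)$ to $(\psi', \chi')$ on suitable subgroups. The intertwining argument of (2) applied to the pair $(\psi, \psi')$ forces $n \in I\Omega$; since modifying $n$ by an element of $I^+$ alters neither $\psi^n$ nor the isomorphism class of $\chi^n$, one may further reduce to $n \in T_0\Omega$. The converse direction is immediate from the definitions. The main obstacle throughout is the intertwining step in (2): one must translate the character-level equality $\psi = \psi^g$ on $ZI^+ \cap gZI^+g^{-1}$ into the containment hypothesis of Lemma~\ref{lem:key}, namely the existence of an affine generic element in $I^+ \cap g^{-1}I^+g$. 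The affine genericity of $\psi$ is precisely what makes this translation possible, since non-triviality on every $U_\alpha/U_{\alpha+1}$ forbids the intersection from projecting into the coordinate hyperplanes of $I^+/I^{++}$.
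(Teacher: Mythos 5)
The argument for (1) matches the paper's: induction in stages plus finiteness of $N(\psi)/ZI^+$. The difficulty is the intertwining step in (2), where the proposal departs from the paper and, as written, has a genuine gap.

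You reduce to showing that if $\psi^n=\psi$ on $ZI^+\cap n(ZI^+)n^{-1}$ for $n\in N(F)$, then $n\in I\Omega$, and you try to do this by producing an affine generic element $x\in I^+\cap n^{-1}I^+n$ and invoking Lemma~\ref{lem:key} with $y=n$. The trouble is that no such $x$ exists unless $n\in I\Omega$, which is exactly what one wants to prove. Indeed, let $w\in\widetilde{W}$ be the image of $n$. If $w\notin\widetilde{\Omega}$, then $\ell(w)>0$, so there is a simple affine root $\alpha=a+r_a\in\Pi$ with $w(\alpha)\in\Psi^-$. For any $x\in I^+\cap n^{-1}I^+n$, write $x$ via the Iwahori factorization with component $x_a\in U_a(F)\cap I^+=U_\alpha$; uniqueness of the factorization for $nxn^{-1}\in I^+$ forces $nx_an^{-1}\in U_{n(a)}(F)\cap I^+=U_{n(a)+r_{n(a)}}$, hence $x_a\in n^{-1}U_{n(a)+r_{n(a)}}n=U_{w^{-1}(n(a)+r_{n(a)})}\subset U_{\alpha+1}$, so the $\alpha$-coordinate of $x$ in $I^+/I^{++}$ vanishes. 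Thus for $n\notin I\Omega$ the image of $I^+\cap n^{-1}I^+n$ in $I^+/I^{++}$ lies in a coordinate hyperplane and contains no affine generic element. The justification you offer for the claim (``the non-triviality of $\psi$ on each affine simple root summand together with the intertwining equality would produce a contradiction'') does not supply the missing step: the equality $\psi^n=\psi$ on the intersection is vacuous on coordinates where the intersection projects to zero, so there is no contradiction to be had.

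What actually works (and what the paper does) is the opposite strategy: instead of seeking an affine generic element to feed into Lemma~\ref{lem:key}, one looks for a \emph{single} simple affine root $\alpha\in\Pi$ with $w(\alpha)\in\Psi^+\setminus\Pi$ (this is the content of \cite[Lemma~9.1]{MR2730575} when $w\notin\widetilde{\Omega}$). Then $U_\alpha\subset I^+\cap n^{-1}I^+n$, $\psi^n$ is trivial on $U_\alpha$ because $nU_\alpha n^{-1}=U_{w(\alpha)}\subset I^{++}$ and $\psi$ is trivial on $I^{++}$, yet $\psi'$ is nontrivial on $U_\alpha$ by affine genericity. This contradicts the intertwining equality on $U_\alpha$ directly, with no need to construct an affine generic element of the intersection. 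Since part (3) relies on the same intertwining step, the gap propagates there as well; the remaining steps of your argument (reduction to $n\in N(F)$ via Proposition~\ref{prop:I^+W}, normalization of $I$ once $w\in\widetilde\Omega$, and the one-dimensionality of $\Hom_{N(\psi)}(\chi,\chi)$) are fine and agree with the paper.
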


\begin{proof}
Since $ZI^{+}$ is normal in $N_{G}(I^{+}; \chi)$ and the quotient $N_{G}(I^{+}; \chi)/ZI^{+}$ is finite, we have
\[
\cInd_{ZI^+}^{N_{G}(I^{+}; \chi)}\chi \cong \bigoplus_{\tilde\chi} \dim(\tilde\chi)\cdot\tilde\chi.
\]
Hence it suffices to prove (2) and (3) by the transitivity of compact induction.
By Mackey's theorem, we have
\[
\Hom_{G}(\pi_{\tilde\chi}, \pi_{\tilde\chi'}) \cong \bigoplus_{n \in N_{G}(I^{+}; \chi)\backslash G/N_{G}(I^{+}; \chi')} \Hom_{N_{G}(I^{+}; \chi)^n \cap N_{G}(I^{+}; \chi')} ({\tilde\chi}^n, {\tilde\chi}').
\]
Let $n \in G$ such that $\Hom_{N_{G}(I^{+}; \chi)^n \cap N_{G}(I^{+}; \chi')} ({\tilde\chi}^n, {\tilde\chi}')\neq0$.
Then we may assume $n\in N_{T}$ by Proposition \ref{prop:I^+W}.
Since $(ZI^{+})^{n}\cap ZI^{+}\subseteq N_{G}(I^{+}; \chi)^{n}\cap N_{G}(I^{+}; \chi)$, we also have $\Hom_{(ZI^{+})^{n}\cap ZI^{+}} ({\tilde\chi}^n, {\tilde\chi}')\neq 0$.
As ${\tilde\chi}^{n}|_{(ZI^{+})^{n}}=({\chi}^{n})^{\oplus \dim\tilde{\chi}}$, and ${\tilde\chi}'|_{ZI^{+}}={\chi}'^{\oplus \dim\tilde{\chi}'}$, we have ${\chi}^{n}={\chi}'$ on $(ZI^{+})^{n}\cap ZI^{+}$.

We show that the image $w$ of $n$ under 
\[
I^+ \backslash G / I^+ \twoheadrightarrow W_\mathrm{aff} \rtimes\widetilde{\Omega}
\]
lies in $\widetilde{\Omega}$.
We assume that $w\notin\widetilde{\Omega}$.
Then we can take a simple affine root $\alpha\in\Pi$ such that $w(\alpha)\in\Psi^{+}\setminus\Pi$ (see \cite[Lemma 9.1]{MR2730575}).
By the definition of $I^{++}$, $U_{w(\alpha)}$ is contained in $I^{++}$.
Hence $\chi$ is trivial on $U_{w(\alpha)}$, and $\chi^{n}$ is trivial on $n^{-1}U_{w(\alpha)}n=U_{\alpha}$.
Since $U_{\alpha}=n^{-1}U_{w(\alpha)}n \subseteq (ZI^{+})^{n}\cap ZI^{+}$, we have $\chi^{n}=\chi'$ on $U_{\alpha}$.
However $\chi'$ is nontrivial on $U_{\alpha}$ since $\alpha\in\Pi$.
This is a contradiction.

As $w \in \widetilde{\Omega}$, $n$ normalizes $I$, hence also normalizes $I^+$.
Therefore we have $ZI^{+}\subseteq N_{G}(I^{+}; \chi)^{n}\cap N_{G}(I^{+}; \chi')$, and $\chi^{n}=\chi'$.
Since $N_{G}(I^{+}; \chi)^{n}=N_{G}(I^{+}; \chi^{n})=N_{G}(I^{+}; \chi')$, we have ${\tilde\chi}^{n}\cong{\tilde\chi}'$.

We finally show the irreducibility.
We take $\tilde\chi=\tilde\chi'$.
Then the above calculation shows that the only contribution to $\Hom_{G}(\pi_{\tilde\chi}, \pi_{\tilde\chi})$ comes from the trivial double coset of $N_{G}(I^{+}; \chi)\backslash G/N_{G}(I^{+}; \chi)$ and we have 
\[
\Hom_{G}(\pi_{\tilde\chi}, \pi_{\tilde\chi}) \cong \Hom_{N_{G}(I^{+}; \chi)}(\tilde\chi, \tilde\chi).
\]
The dimension of the right-hand side is one and therefore $\pi_{\tilde\chi}$ is irreducible.
\end{proof}

The irreducible supercuspidal representations $\pi_\chi$ constructed in this way are called $\mathit{simple}$ $\mathit{supercuspidal}$ representations of $G$.

\subsection{Parametrization: the case of $\GL_N(F)$}
In this subsection, we consider the case of $\GL_N$.
Let $\mathbf{G}$ be $\GL_N$ over $F$.
We choose $\mathbf{T}$ to be the subgroup of diagonal matrices.
Then we have 
\begin{align*}
\Phi&=\{\pm(e_i-e_j) \mid 1\leq i<j\leq N\}, \text{ and}\\
\Psi&=\{a+r \mid a \in \Phi, r \in \Z \}.
\end{align*}
We take the root basis 
\[
\Delta=\{e_1-e_2, \ldots, e_{N-1}-e_N\} 
\]
corresponding to the Borel subgroup $\mathbf{B}$ consisting of upper triangular matrices.
We let $C$ be the fundamental alcove of $\mathcal{A}(\mathbf{G}, \mathbf{T})$ (i.e., $C$ is contained in the chamber which is defined by $\mathbf{B}$, and the closure $\ol{C}$ of $C$ contains $0$).
Then the corresponding affine root basis is 
\[
\Pi=\{e_1-e_2, \ldots, e_{N-1}-e_N, e_N-e_1+1\}, 
\]
and the Iwahori subgroup and its filtrations are given by
\[
I = \begin{pmatrix}
 \mcO^{\times}&&\mcO\\
 &\ddots&\\
 \mfp&&\mcO^{\times}
\end{pmatrix}, \,
I^+ = \begin{pmatrix}
 1+\mfp&&\mcO\\
 &\ddots&\\
 \mfp&&1+\mfp
\end{pmatrix} \text{, and }
\]
\[
I^{++} = \begin{pmatrix}
 1+\mfp&\mfp&&\mcO\\
 &\ddots&\ddots&\\
 &\mfp&\ddots&\mfp\\
\mfp^2&&&1+\mfp
\end{pmatrix}.
\]
For $x=(x_{ij})_{ij} \in I^+$, we regard its simple affine components $(x_\alpha)_{\alpha} \in \bigoplus_{\Pi} U_\alpha/U_{\alpha+1}$ as an element of $k^{\oplus N}$ by
\begin{align*}
I^+/I^{++} &\cong \bigoplus_{\alpha\in\Pi}U_\alpha/U_{\alpha+1} \cong k^{\oplus N} \\
(x_{ij})_{ij} &\mapsto \left(\ol{x_{1 2}}, \ldots, \ol{x_{N-1, N}}, \ol{x_{N 1}\varpi^{-1}}\right).
\end{align*}

For $a \in k^{\times}$, we set 
\[
\varphi_a := 
\begin{pmatrix}
0 & I_{N-1} \\
\varpi a & 0 
\end{pmatrix} \in G.
\]
Here, we regard $a$ as an element of $F^{\times}$ by the Teichm$\ddot{\mathrm{u}}$ller lift.
This element satisfies $\varphi_a^N=\varpi aI_N$, and, for any $a\in k^{\times}$, we can choose a set of representatives $\Omega$ of $\widetilde{\Omega}$ to be $\lan\varphi_{a}\ran$.

For $a \in k^{\times}$, we define an affine generic character $\psi_{a} \colon ZI^+ \ra \C^{\times}$ by
\begin{align*}
\psi_{a}(z) &= 1 \text{ for $z \in Z$, and}\\
\psi_{a}(x)&:=\psi\left(\ol{x_{12}}+\cdots+\ol{x_{N-1, N}}+a\ol{x_{N1}\varpi^{-1}}\right) \text{ for $x=(x_{ij})_{ij} \in I^+$},
\end{align*}
where $\psi$ is the fixed additive character on $k$.
Then we have $N_{G}(I^{+}; \psi_a)=ZI^+\lan\varphi_{a^{-1}}\ran$.

For an $N$-th root of unity $\zeta \in \mu_N$, 
let $\chi_{a, \zeta} \colon ZI^+\lan\varphi_{a^{-1}}\ran \ra \C^{\times}$ be the character defined by
\begin{align*}
 \chi_{a, \zeta}(x) &= \psi_a(x) \text{ for $x \in ZI^+$, and}\\
 \chi_{a, \zeta}(\varphi_{a^{-1}}) &= \zeta. 
\end{align*}
Let $\pi_{a, \zeta}$ be the simple supercuspidal representation of $\GL_N(F)$ defined by
\[
\pi_{a, \zeta}:=\cInd^{G}_{ZI^+\lan\varphi_{a^{-1}}\ran} \chi_{a, \zeta}.
\]
Then, by Proposition \ref{prop:ssc} (3), we can check that the set 
\[
\{(a, \zeta) \mid a \in k^{\times}, \zeta \in \mu_N\}
\]
parametrizes the set of equivalence classes of simple supercuspidal representations of $\GL_N(F)$ with trivial central character.

\subsection{Parametrization: the case of $\SO_{2n+1}(F)$}
In this subsection, we consider the case of  
\[
\SO_{2n+1} := \bigl\{g \in \GL_{2n+1} \mid {}^{t}\!gJg=J, \det(g)=1 \bigr\}, 
\]
with
\[
J = \begin{pmatrix}
 &&&1\\
 &&-1&\\
 &\adots&&\\
 (-1)^{2n}&&&
\end{pmatrix}.
\] 
Let $\mathbf{H}$ be $\SO_{2n+1}$ over $F$.
Let $\mathbf{T}_{\mathbf{H}}$ be the subgroup of diagonal matrices in $\mathbf{H}$:
\[
\mathbf{T}_{\mathbf{H}}:=\bigl\{\diag(t_1, \ldots, t_n, 1, t_n^{-1}, \ldots, t_1^{-1}) \mid t_i\neq0\bigr\}.
\]
Then we have 
\begin{align*}
\Phi_{\mathbf{H}}&=\{\pm e_i\pm e_j \mid 1\leq i<j\leq n\} \cup \{ \pm e_i \mid 1\leq i \leq n\}, \text{ and}\\
\Psi_{\mathbf{H}}&=\{a+r \mid a \in \Phi_{\mathbf{H}}, r \in \Z \}.
\end{align*}

We take the root basis 
\[
\Delta_{\mathbf{H}}=\{e_1-e_2, \ldots, e_{n-1}-e_n, e_n\}
\]
corresponding to the Borel subgroup $\mathbf{B}_{\mathbf{H}}$ consisting of upper triangular matrices in $\mathbf{H}$.
We let $C_{\mathbf{H}}$ be the fundamental alcove of $\mathcal{A}(\mathbf{H}, \mathbf{T}_{\mathbf{H}})$.
Then the corresponding affine root basis is 
\[
\Pi_{\mathbf{H}}=\{e_1-e_2, \ldots, e_{n-1}-e_n, e_n, -e_1-e_2+1\}.
\]
We denote the Iwahori subgroup and its subgroups by $I_{H}$, $I_{H}^{+}$, and $I_{H}^{++}$.

For $y=(y_{ij})_{ij} \in I_H^+$, we regard its simple affine components $(y_\alpha)_{\alpha} \in \bigoplus_{\Pi_{\mathbf{H}}} U_\alpha/U_{\alpha+1}$ as an element of $k^{\oplus (n+1)}$ by
\begin{align*}
I_{H}^+/I_{H}^{++} &\cong \bigoplus_{\alpha\in\Pi_{\mathbf{H}}}U_\alpha/U_{\alpha+1} \cong k^{\oplus (n+1)} \\
(y_{ij})_{ij} &\mapsto \left(\ol{y_{1 2}}, \ldots, \ol{y_{n, n+1}}, \ol{y_{2n, 1}\varpi^{-1}}\right).
\end{align*}

For $b \in k^{\times}$, we set 
\[
\varphi'_b := 
-\begin{pmatrix}
 &&\varpi^{-1}b^{-1}\\
 &I_{2n-1}&\\
 \varpi b&& 
 \end{pmatrix} \in H.
\]
Here, we regard $b$ as an element of $F^{\times}$ by the Teichm$\ddot{\mathrm{u}}$ller lift.
The order of this element is two, and, for any $b\in k^{\times}$, we can choose a set of representatives $\Omega$ of $\widetilde{\Omega}$ to be $\lan\varphi'_{b^{-1}}\ran$.

For $b \in k^{\times}$, we define an affine generic character $\psi'_b \colon I_H^+ \ra \C^{\times}$ by 
\[
\psi'_b(y) := \psi\left(\ol{y_{12}}+\cdots+\ol{y_{n, n+1}}+b\ol{y_{2n, 1}\varpi^{-1}}\right) \text{ for $y=(y_{ij})_{ij} \in I_H^+$}.
\]
Then we have $N_{H}(I_{H}^{+}; \psi'_b)=I_H^+\lan\varphi'_{b^{-1}}\ran$.

For $\xi \in \{\pm1\}$, 
let $\chi'_{b, \xi} \colon I_H^+\lan\varphi'_{b^{-1}}\ran \ra \C^{\times}$ be the character defined by
\begin{align*}
 \chi'_{b, \xi}(y) &= \psi'_b(y) \text{ for $y \in I_H^+$ and }\\
 \chi'_{b, \xi}(\varphi'_{b^{-1}}) &= \xi. 
\end{align*}
Let $\pi'_{b, \xi}$ be the simple supercuspidal representation of $\SO_{2n+1}(F)$ defined by
\[
\pi'_{b, \xi}:=\cInd^{H}_{I_H^+\lan\varphi'_{b^{-1}}\ran} \chi'_{b, \xi}.
\]
Then, by Proposition \ref{prop:ssc} (3), we can check that the set 
\[
\{(b, \xi) \mid b \in k^{\times}, \xi \in \{\pm1\}\}
\]
parametrizes the set of equivalence classes of simple supercuspidal representations of $\SO_{2n+1}(F)$.

\section{Characters of simple supercuspidal representations}\label{sec:char}
\subsection{The case of the standard $\GL_N(F)$}
Let us first recall the characters of representations of $p$-adic reductive groups.
For a connected reductive group $\mathbf{G}$ over $F$, 
we write $G^{\rs}$ for the set of regular semisimple elements of $G$.
This is an open subset of $G$.
We denote by $\mathcal{H}(G)$ the set of compactly supported locally constant functions on $G$.

\begin{thm}[\cite{MR0414797}]\label{thm:char}
Let $\mathbf{G}$ be a connected reductive group over $F$.
Let $\pi$ be an irreducible smooth representation of $G$.
Then there exists a unique locally constant function $\Theta_\pi$ on $G^{\rs}$ such that 
\[
\tr \pi(f) = \int_{G^{\rs}} \Theta_\pi(g) f(g)\,dg
\]
for every $f \in \mathcal{H}(G)$ satisfying $\supp(f) \subseteq G^{\rs}$, 
where $\tr \pi$ is the distribution character of $\pi$.
\end{thm}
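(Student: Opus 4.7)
The plan is to establish the existence of $\Theta_\pi$ in two stages: first show that $f \mapsto \tr\pi(f)$ is a well-defined invariant distribution on $G(F)$, then prove that its restriction to $G^{\rs}(F)$ is represented by a locally constant function. Uniqueness is automatic once existence is shown, because $G^{\rs}(F)$ is open and a locally constant function on an open set is determined by the distribution it defines (functions in $\mathcal{H}(G)$ with support in $G^{\rs}(F)$ separate points there).

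For the first stage I would use admissibility of $\pi$. For $f \in \mathcal{H}(G)$, choose an open compact subgroup $K$ small enough that $f$ is bi-$K$-invariant. Then $\pi(f)$ factors through the finite-dimensional subspace $\pi^{K}$, so $\pi(f)$ has finite rank and its trace is defined. The resulting functional $\tr\pi \colon \mathcal{H}(G) \to \C$ is linear, continuous in the natural inductive limit topology, and manifestly invariant under conjugation, hence defines an invariant distribution on $G(F)$.

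The heart of the matter is the second stage, and this is where I would invoke (or reprove) Harish-Chandra's regularity theorem for invariant distributions: any invariant eigendistribution on $G(F)$ is represented by a locally integrable function that is locally constant on $G^{\rs}(F)$. To apply this, one first shows that $\tr\pi$ is an eigendistribution for the Bernstein center (or, in Harish-Chandra's original setup, for the appropriate algebra of invariant distributions acting by convolution), with eigencharacter given by the infinitesimal/central character of $\pi$. Then Harish-Chandra's descent machinery reduces the problem from a neighborhood of a semisimple element $\gamma \in G^{\rs}(F)$ to a neighborhood of the identity in the centralizer $\Cent_G(\gamma)$, which is a torus at regular semisimple points; on a torus, invariant distributions are trivially represented by locally constant functions away from singular loci.

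The main obstacle, by far, is the regularity theorem itself: establishing local integrability of $\Theta_\pi$ and local constancy on the regular set requires the full strength of Harish-Chandra's analysis — Shalika germ expansions, submersion principle, and the study of orbital integrals and their boundedness. In a self-contained treatment I would devote the bulk of the work to this input and only briefly verify the eigendistribution property and perform the descent; in practice, for the purposes of this paper, one simply cites \cite{MR0414797}, where all of this is carried out.
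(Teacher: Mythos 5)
The paper gives no proof for this statement; it is quoted verbatim from Harish-Chandra \cite{MR0414797} and the citation is the entire justification. You recognize this in your last sentence, so your overall strategy matches the paper's. Since you also sketched what the underlying proof looks like, a few remarks on the sketch: the existence and uniqueness discussion in your first two paragraphs is correct, modulo the standard fact (which you should state, not assume) that an irreducible smooth representation of a $p$-adic reductive group is automatically admissible — this is what makes $\tr\pi$ well-defined. In the regularity discussion, the eigendistribution/Bernstein-center framing is the real-group picture and is not how Harish-Chandra actually organizes the $p$-adic argument in \cite{MR0414797}; there the main tools are Jacquet module finiteness, descent to centralizers of semisimple elements, and the Howe finiteness machinery for orbital integrals, not an eigenvalue equation. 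You also invoke full local integrability of $\Theta_\pi$, which is a stronger statement than what the theorem asserts (a locally constant representing function on the open set $G^{\rs}(F)$ only) and historically came later; for the statement as given here the weaker conclusion in \cite{MR0414797} suffices. None of this is a genuine gap, though, precisely because you defer to the citation — which is exactly what the paper does.
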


We call $\Theta_\pi$ the $\mathit{character}$ of $\pi$.
This function is invariant under conjugation.

When $\pi$ is a supercuspidal representation which is compactly induced from an open subgroup, 
we have the following formula to describe its character.

\begin{thm}[Character formula, \cite{MR1039842}]\label{thm:CF}
Let $\mathbf{G}$ be a connected reductive group over $F$ and $\mathbf{Z}$ its center.
Let $K$ be an open subgroup of $G$ such that $K$ contains $Z$ and $K/Z$ is compact.
Let $\rho$ be a finite-dimensional irreducible smooth representation of $K$.
We assume that the representation $\pi:=\cInd_K^{G} \rho$ is irreducible, so supercuspidal.
Then, for every $g \in G^{\rs}$, we have
\[
\Theta_\pi(g)
=\sum_{\begin{subarray}{c} y\in K\backslash G\\ ygy^{-1} \in K \end{subarray}} \tr\rho(ygy^{-1}),
\]
provided that the sum is finite.
\end{thm}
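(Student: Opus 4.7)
The plan is to evaluate the distribution character $\tr\pi(f)$ explicitly for test functions $f\in\mathcal{H}(G)$ supported in $G^{\rs}(F)$, match it with the integral of $f$ against the claimed sum, and then invoke the uniqueness clause of Theorem \ref{thm:char} to read off the pointwise formula for $\Theta_\pi$.

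First I would realize $\pi=\cInd_K^{G(F)}\rho$ on the space of compactly-supported-modulo-$K$ functions $\phi\colon G(F)\to V_\rho$ satisfying $\phi(kg)=\rho(k)\phi(g)$, with $\pi(g)$ acting by right translation. Fixing a set $\{y_i\}$ of representatives for $K\backslash G(F)$ and writing $\pi(f)\phi(x)=\int_G f(x^{-1}g)\phi(g)\,dg$, a direct computation shows that $\pi(f)$ is an integral operator whose block at $(y_i,y_j)$ is
\[
K_f(y_i,y_j)=\int_K f(y_i^{-1}ky_j)\rho(k)\,dk.
\]
Taking the diagonal trace and changing variables $g=y_i^{-1}ky_i$ (which preserves Haar measure by unimodularity) in each summand yields
\[
\tr\pi(f)=\sum_{i}\int_{y_i^{-1}Ky_i} f(g)\,\tr\rho(y_igy_i^{-1})\,dg.
\]
Interchanging sum and integral, combined with $\supp(f)\subset G^{\rs}(F)$, converts this into $\int_{G^{\rs}(F)}f(g)\,S(g)\,dg$, where $S(g)$ denotes the sum appearing in the statement; the uniqueness in Theorem \ref{thm:char} then forces $\Theta_\pi(g)=S(g)$ on $G^{\rs}(F)$.

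The main obstacle is justifying the interchange of sum and integral, which amounts to the local finiteness statement that for every regular semisimple $g$ the set $\{y\in K\backslash G(F)\mid ygy^{-1}\in K\}$ is finite. Geometrically this counts the number of $K$-conjugacy classes in $K\cap (\text{$G(F)$-orbit of }g)$; it is finite because the semisimple orbit is closed in $G(F)$, the regular elements form an open subset on which $K$ acts with open orbits, and $K/Z$ is compact. Once this local finiteness is established, Fubini is legal, the function $S$ is locally constant on $G^{\rs}(F)$ (locally a finite sum of translates of $\tr\rho$), and the uniqueness part of Theorem \ref{thm:char} promotes the integral identity to the pointwise formula.
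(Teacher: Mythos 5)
The route you take — realize $\pi(f)$ as a block-integral operator on the space of the induced representation, compute the diagonal blocks $\int_K f(y_i^{-1}ky_i)\rho(k)\,dk$, change variables, and invoke the uniqueness clause of Theorem~\ref{thm:char} — is the standard Frobenius-type derivation, and the algebra in your first two paragraphs is correct. The paper cites this statement to Sally and gives no proof of its own, so the only question is whether your argument is complete; the completion hinges, as you say, on finiteness of $\{Ky\in K\backslash G(F): ygy^{-1}\in K\}$, and that is exactly where the gap lies.

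Your geometric identification of this set with ``the number of $K$-conjugacy classes in $K\cap(G(F)\text{-orbit of }g)$'' is not correct. Writing $C_g$ for the conjugacy class of $g$ and $T=\Cent_G(g)$, the natural map $Ky\mapsto K\cdot(ygy^{-1})$ onto $K\backslash(C_g\cap K)$ has nontrivial fibers: the fiber over the $K$-orbit of $y_0gy_0^{-1}$ is $(y_0^{-1}Ky_0\cap T(F))\backslash T(F)$, which is finite precisely when $T(F)/Z$ is compact, that is, when $g$ is \emph{elliptic}. For a non-elliptic regular semisimple $g$ whose class meets $K$, those fibers are infinite, and since each element of a fiber contributes the identical summand $\tr\rho(y_0gy_0^{-1})$, the sum literally diverges unless one separately shows that $\tr\rho$ vanishes on the relevant elements (which is how such formulas are actually reconciled in the non-elliptic case); your sketch does not distinguish these cases. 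Two further issues: the compactness of $C_g\cap K$, needed to pass from ``open $K$-orbits'' to ``finitely many orbits'', is asserted but does not follow merely from ``$C_g$ closed and $K/Z$ compact'' — $K$ itself is noncompact, and one has to bound $C_g$ inside a compact-modulo-center set by a separate argument, e.g.\ via the Steinberg map; and exchanging the sum over $i$ with $\int_{G^{\rs}(F)}$ requires the finiteness uniformly on $\supp(f)$, not merely point-by-point, so one more compactness step is needed there as well. In short, the structure of the argument is right, but the finiteness lemma is the heart of the theorem, and your justification of it is both incomplete and, taken as a counting statement, incorrect outside the elliptic case.
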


We apply this formula to a computation of the characters of simple supercuspidal representations.

Let $\mathbf{G}$ be $\GL_N$ over $F$.

\begin{lem}\label{lem:Eisen}
Let $g \in I^+ \subseteq \GL_N(F)$ be an affine generic element.
Then $g$ is strongly regular semisimple elliptic. 
\end{lem}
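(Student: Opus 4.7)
The plan is to show that the characteristic polynomial $P(T)=\det(TI-g)$ is irreducible over $F$. In characteristic zero, this immediately implies that $g$ is regular semisimple (the eigenvalues are distinct in $\ol{F}$), and the centralizer $Z_G(g)=F[g]^{\times}\cong L^{\times}$ for $L=F(\lambda)$ a degree-$N$ field extension is the anisotropic torus $\mathrm{Res}_{L/F}\mathbb{G}_m$ modulo $Z$, so $g$ is also elliptic.

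To prove irreducibility, I would perform the substitution $T=1+S$ and study $Q(S):=P(1+S)=\det(SI-A)$ where $A:=g-I$. The affine genericity of $g$ translates into precise information about $A$: its diagonal entries lie in $\mfp$; its strictly upper-triangular entries lie in $\mcO$ and the superdiagonal entries $A_{i,i+1}$ are units; its strictly lower-triangular entries lie in $\mfp$ and $A_{N,1}$ has valuation exactly one. The goal is then to check that $Q(S)$ is Eisenstein with respect to $\mfp$, which makes it irreducible over $F$.

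The coefficient of $S^{N-i}$ in $Q(S)$ is, up to sign, the sum of all $i\times i$ principal minors of $A$. Expanding such a minor indexed by $J\subset\{1,\ldots,N\}$ with $|J|=i$ as a sum over permutations $\tau$ of $J$, observe that $\sum_k(\tau(J_k)-J_k)=0$, so one cannot have $\tau(J_k)>J_k$ for all $k$. Thus every permutation of $J$ has at least one position where $\tau(J_k)\le J_k$, at which $A_{J_k,\tau(J_k)}$ lies in $\mfp$, while all remaining factors lie in $\mcO$; this forces each principal minor (and so each non-leading coefficient of $Q$) into $\mfp$. For the constant term $c_0=(-1)^N\det A$, the same analysis on the full index set $\{1,\ldots,N\}$ shows that the long cycle $(1\,2\,\cdots\,N)$ contributes a unit times $\varpi$, coming from the superdiagonal units and $A_{N,1}\in\varpi\mcO^{\times}$. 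An elementary combinatorial check verifies that the long cycle is the unique permutation of $\{1,\ldots,N\}$ with exactly one weak descent (any other permutation has at least two weak descents, hence contributes a term of valuation $\ge 2$). Thus $v(\det A)=1$, and $Q(S)$ is Eisenstein.

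The main technical content, and the only real obstacle, is this combinatorial bookkeeping pinning down the valuations of the coefficients; once that is in place, the Eisenstein criterion finishes the proof and the regular semisimple elliptic conclusion is formal.
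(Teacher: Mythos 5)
Your proof is correct and follows exactly the same route as the paper: the paper's proof is the single sentence that the characteristic polynomial of $g-I_N$ is Eisenstein, hence irreducible, hence $g$ is regular semisimple elliptic. Your write-up just supplies the combinatorial details (valuation bounds on the principal minors and the uniqueness of the long cycle among permutations with a single weak descent) that the paper leaves to the reader.
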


\begin{proof}
The characteristic polynomial of $g-I_N$ is Eisenstein, hence it is irreducible over $F$.
Therefore $g$ is regular semisimple elliptic.
Since the derived group of $\mathbf{G}$ is given by $\SL_{N}$ and in particular simply connected, the regularity of $g$ implies the strong regularity of $g$.
\end{proof}

Now we compute the characters of simple supercuspidal representations at affine generic elements.
We take $a\in k^{\times}$ and $\zeta \in \mu_N$, and consider the simple supercuspidal representation $\pi_{a, \zeta}$ of $G$ defined in Section \ref{sec:ssc}.3.

\begin{prop}\label{prop:charGL}
Let $g \in I^+$ be an affine generic element.
Let $(g_1, \ldots, g_N)$ be the simple affine components of $g$.
Then we have
\[
\Theta_{\pi_{a,\zeta}}(g) = \Kl_{ag_{1} \cdots g_{N}}^{N} (\psi),
\]
where the right-hand side is the Kloosterman sum in Definition $\ref{defn:Kl}$.
\end{prop}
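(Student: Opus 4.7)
The plan is to invoke the character formula of Theorem \ref{thm:CF} for $\pi=\cInd_{K}^{G(F)}\chi$ with $K=ZI^+\lan\varphi\ran$. Since Lemma \ref{lem:Eisen} ensures $g\in G^\rs(F)$, we obtain
\[
\Theta_\pi(g)=\sum_{\substack{y\in K\backslash G(F)\\ ygy^{-1}\in K}}\chi(ygy^{-1}),
\]
and the task reduces to identifying the summation set and evaluating each term. If $ygy^{-1}\in K$, write it as $zu\varphi^k$ and compare Kottwitz invariants: $\kappa_G(g)=0$ together with $\kappa_G(\varphi)=1$ forces $k\in N\Z$, hence $\varphi^k\in Z$ and $ygy^{-1}\in(Z\cap I)I^+\subset I$. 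Lemma \ref{lem:key} then yields $y\in I\Omega$. Conversely, since conjugation by $\varphi$ cyclically permutes the affine simple roots, for any $y=i\varphi^k\in I\Omega$ the element $ygy^{-1}$ lies in $I^+\subset K$. Because $\lan\varphi\ran\subset K$ normalizes $I$, every coset in $K\backslash I\Omega$ has a representative in $I$, giving
\[
K\backslash I\Omega\cong (K\cap I)\backslash I\cong T_0/\bigl((Z\cap T_0)T_1\bigr)\cong T(q)/Z(q),
\]
with representatives $t=\diag(t_1,\ldots,t_{N-1},1)\in T(q)$.

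To evaluate the summand, write a general $i\in I$ as $i=tu$ with $u\in I^+$; since $I^+/I^{++}$ is abelian, conjugation by $u$ preserves affine simple components, so $\chi(igi^{-1})=\chi(tgt^{-1})$. The affine simple components of $tgt^{-1}$ are $(t_1t_2^{-1}g_1,\ldots,t_{N-1}g_{N-1},t_1^{-1}g_N)$, and using that $\chi$ is trivial on $Z$ and equals $\psi_1$ on $I^+$,
\[
\chi(tgt^{-1})=\psi\Bigl(\sum_{j=1}^{N-1}t_j t_{j+1}^{-1}g_j+t_1^{-1}g_N\Bigr).
\]
Finally, the change of variables $u_j:=t_jt_{j+1}^{-1}g_j$ for $j<N$ and $u_N:=t_1^{-1}g_N$ is a bijection from our set of representatives onto $\{(u_1,\ldots,u_N)\in(k^\times)^N:\prod_j u_j=g_1\cdots g_N\}$, converting the sum into $\Kl^N_{g_1\cdots g_N}(\psi)$ on the nose.

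The main subtle point is the first step: it is precisely the affine genericity of $g$, via Lemma \ref{lem:key} combined with the determinant constraint, that rules out nontrivial $\lan\varphi\ran$-contributions to $ygy^{-1}$. As a consequence, the value $\chi(\varphi)=\zeta$ never enters the character sum, and $\Theta_\pi(g)$ on affine generic elements is independent of $\zeta$. Everything downstream is a routine change of variables.
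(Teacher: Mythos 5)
Your proof is correct and follows essentially the same route as the paper's: invoke the character formula, use the determinant constraint (which you phrase as a Kottwitz-invariant comparison) to reduce to Lemma \ref{lem:key} as in Lemma \ref{lem:sumGL}, identify coset representatives as $\{t\in T(q): t_N=1\}$, and change variables to obtain the Kloosterman sum. Your additional remarks — checking the converse inclusion $I\Omega\subset\{y: ygy^{-1}\in K\}$ and spelling out $K\backslash I\Omega\cong T(q)/Z(q)$ — fill in details the paper leaves to the reader but do not change the argument.
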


We first prove the following lemma.

\begin{lem}\label{lem:sumGL}
Let $g \in I^+$ be an affine generic element.
If $y \in G$ satisfies $ygy^{-1} \in ZI^+\lan\varphi_{a^{-1}}\ran$, then $y \in ZI\lan\varphi_{a^{-1}}\ran$.
\end{lem}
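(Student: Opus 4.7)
The strategy is to reduce the hypothesis $ygy^{-1}\in ZI^{+}\langle\varphi\rangle$ to the stronger condition $ygy^{-1}\in I$, after which Lemma \ref{lem:key} finishes the proof. Since $\varphi^{N}=\varpi I_{N}\in Z$, every element of $ZI^{+}\langle\varphi\rangle$ can be written as $zu\varphi^{k}$ with $z=cI_{N}\in Z$, $u\in I^{+}$, and a unique $k\in\{0,1,\dots,N-1\}$, so I first write $ygy^{-1}$ in this form.

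The key step is a valuation-of-determinant computation. Since $g\in I^{+}$ we have $\det(g)\in 1+\mfp$, so $v(\det(ygy^{-1}))=0$. On the other hand $\det(u)\in 1+\mfp$ and $\det(\varphi)=(-1)^{N-1}\varpi$ yield
\[
v\big(\det(zu\varphi^{k})\big)=Nv(c)+k.
\]
The equation $Nv(c)+k=0$ with $v(c)\in\Z$ and $0\le k<N$ forces $k=0$ and $v(c)=0$. In particular $z\in I$, and therefore $ygy^{-1}=zu\in I$.

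Now Lemma \ref{lem:key} applies directly to $g$ and $y$ and gives $y\in I\Omega=I\langle\varphi\rangle\subset ZI\langle\varphi\rangle$, which is the desired conclusion. The whole argument is essentially a one-line deduction from Lemma \ref{lem:key}; the only substantive point, and hence the main obstacle (modest though it is), is the determinant bookkeeping that eliminates the $\varphi^{k}$ factor, exploiting that powers of $\varphi$ modulo $Z$ are detected by the $\varpi$-adic valuation of the determinant.
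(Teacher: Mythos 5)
Your proof is correct and follows essentially the same route as the paper: in both cases, a valuation-of-determinant argument eliminates the $\langle\varphi\rangle$ part and places $ygy^{-1}$ in $I$, after which Lemma \ref{lem:key} concludes. The paper's version is just terser (it states $\det(ygy^{-1})=\det(g)$ and immediately deduces $ygy^{-1}\in I$), whereas you spell out the decomposition $zu\varphi^{k}$ and the bookkeeping explicitly.
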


\begin{proof}
Assume that $y \in G$ satisfies $ygy^{-1} \in ZI^+\lan\varphi_{a^{-1}}\ran$.
As $\det(ygy^{-1})=\det(g)$, $ygy^{-1}$ lies in $I$.
Thus $y \in N_{G}(I)=I\Omega=ZI\lan\varphi_{a^{-1}}\ran$ by Lemma \ref{lem:key}.
\end{proof}

\begin{proof}[Proof of Proposition \ref{prop:charGL}]
By Lemma \ref{lem:sumGL}, if $y \in G$ satisfies $ygy^{-1} \in ZI^+\lan\varphi_{a^{-1}}\ran$, then $y \in ZI\lan\varphi_{a^{-1}}\ran$.
Hence, by the character formula (Theorem \ref{thm:CF}), we have 
\[
\Theta_{\pi_{a,\zeta}}(g) = \sum_{y \in ZI^+\lan\varphi_{a^{-1}}\ran\backslash ZI\lan\varphi_{a^{-1}}\ran} \chi_{a, \zeta}(ygy^{-1}).
\]
Since 
\[
\{t=\diag(t_1, \ldots, t_N) \in T(q) \mid t_N=1 \}
\]
is a system of representatives of the set $ZI^+\lan\varphi_{a^{-1}}\ran\backslash ZI\lan\varphi_{a^{-1}}\ran$, 
we have 
\begin{align*}
\RHS &= \sum_{t_1, \ldots, t_{N-1} \in k^{\times}} \psi\left(\frac{t_1}{t_2}g_1 + \cdots + \frac{t_{N-1}}{1}g_{N-1} + a\cdot\frac{1}{t_1}g_N\right) \\
&= \sum_{\begin{subarray}{c} s_1, \ldots, s_n \in k^{\times}\\ s_1\cdots s_N = a g_1 \cdots g_N \end{subarray}} \psi(s_1 + \cdots + s_N) \\
&= {\rm Kl}_{ag_1 \cdots g_N}^N (\psi).
\end{align*}
\end{proof}

\subsection{The case of the twisted $\GL_{2n}(F)$}
In this subsection, we compute the twisted characters of self-dual simple supercuspidal representations of $\GL_{2n}(F)$ with trivial central characters.

Let us first recall the twisted characters of representations of $p$-adic reductive groups.
We consider a connected reductive group $\mathbf{G}$ over $F$ and an automorphism $\theta$ of $\mathbf{G}$ over $F$.
Then $g\in \mathbf{G}(\ol{F})$ is said to be $\theta$-$\mathit{semisimple}$ if the automorphism $\mathrm{Int}(g)\circ\theta$ of $\mathbf{G}$ is quasi-semisimple (i.e., its restriction to the derived group of $\mathbf{G}$ is semisimple).
Moreover, $\theta$-semisimple element $g\in \mathbf{G}(\ol{F})$ is said to be $\theta$-$\mathit{regular}$ if the identity component $\Cent_{\mathbf{G}}(g, \theta)^{0}$ of the $\theta$-twisted centralizer $\Cent_{\mathbf{G}}(g, \theta)$ of $g$ in $\mathbf{G}$ is a torus.
We write $G^{\trs}$ for the set of $\theta$-regular $\theta$-semisimple elements of $G$.
This is an open subset of $G$.

\begin{thm}[{\cite[Theorem 1]{MR889110}}]\label{thm:tchar}
Let $\mathbf{G}$ be a connected reductive group over $F$.
Let $\theta$ be an automorphism of $\mathbf{G}$ defined over $F$.
Let $\pi$ be a $\theta$-stable $($i.e., $\pi \cong \pi^{\theta}$$)$ irreducible smooth representation of $G$, and fix an isomorphism $A \colon \pi \ra \pi^\theta$.
Then there exists a unique locally constant function $\Theta_{\pi, \theta}$ on $G^{\trs}$ such that 
\[
\tr \pi_\theta (f) = \int_{G^{\trs}} \Theta_{\pi, \theta}(g) f(g)\,dg
\]
for every $f \in \mathcal{H}(G)$ satisfying $\supp(f) \subseteq G^{\trs}$, 
where $\tr \pi_\theta$ is the $\theta$-twisted distribution character of $\pi$ with respect to $A$, that is
\[
\tr \pi_{\theta} (f):= \tr \bigl( \pi(f)\circ A \bigr).
\]
\end{thm}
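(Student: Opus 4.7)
The plan is to establish this as a twisted analogue of Harish-Chandra's regularity theorem (Theorem \ref{thm:char}), with the ordinary conjugation action replaced by the twisted conjugation $g\cdot x := gx\theta(g)^{-1}$. First I would construct the twisted distribution character: for $f\in\mathcal{H}(G)$, admissibility of $\pi$ together with the smoothness of $f$ forces $\pi(f)$ to have finite rank, so $\pi(f)\circ A$ is a finite rank operator and its trace $\tr\pi_\theta(f)$ defines a distribution on $G(F)$ that is invariant under twisted conjugation, i.e.\ $\tr\pi_\theta(f^g)=\tr\pi_\theta(f)$ where $f^g(x):=f(g^{-1}x\theta(g))$. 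Uniqueness of $\Theta_{\pi,\theta}$ on $G^{\trs}(F)$ is then immediate from the openness of that locus and the density of test functions.

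The core of the argument is to show that the distribution $\tr\pi_\theta$ is represented by a locally integrable function on $G(F)$, which is moreover locally constant on $G^{\trs}(F)$. I would follow the strategy of Clozel, later refined by Lemaire (and in the more general setting by Lemaire-M{\oe}glin-Waldspurger), mirroring Harish-Chandra's treatment of the untwisted case. The first ingredient is a \emph{local descent} near a twisted-semisimple element $x_0\in G(F)$: a $\theta$-twisted Cayley or exponential map identifies a neighborhood of $x_0$ with a neighborhood of $0$ in $\mathrm{Lie}(G^\theta_{x_0})$, where $G^\theta_{x_0}:=\{g\in G\mid gx_0\theta(g)^{-1}=x_0\}$ is the twisted centralizer, and intertwines twisted conjugation by $G(F)$ near $x_0$ with ordinary adjoint action of $G^\theta_{x_0}(F)$. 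The second ingredient is a twisted Howe finiteness statement, which produces a germ expansion of $\tr\pi_\theta$ near $x_0$ in terms of twisted nilpotent orbital integrals on $\mathrm{Lie}(G^\theta_{x_0})$. Local integrability then follows inductively, since each such nilpotent orbital integral is represented by a locally integrable function on the Lie algebra (the usual Harish-Chandra result applied to the centralizer, which has strictly smaller dimension than $G$ unless $x_0$ is already $\theta$-regular), and this representation transfers back via the descent map.

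Once local integrability is in place, local constancy on $G^{\trs}(F)$ is essentially formal: on this open locus the twisted centralizer of any element is a torus, twisted conjugacy orbits are locally closed of maximal dimension, and an invariant locally integrable function must be constant along each orbit and hence locally constant transversally. The main obstacle will be the twisted germ expansion and the accompanying homogeneity estimates for twisted nilpotent orbital integrals; these form the technical heart of Harish-Chandra theory in the untwisted setting and require careful adaptation, typically using a twisted Jacobson-Morozov argument to stratify by twisted nilpotent orbits and an induction on dimension. The descent step itself is also nontrivial, requiring a good $\theta$-equivariant exponential near every twisted-semisimple element, which is where restrictions on the residue characteristic commonly enter.
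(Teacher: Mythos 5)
The paper does not prove this theorem; it is invoked as an external result, cited from Lemaire's preprint (\cite[5.8 Corollaire]{Lemaire:2010aa}), exactly as the untwisted regularity theorem is invoked from Harish-Chandra (\cite{MR0414797}). So there is no internal proof to compare your sketch against.

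That said, your outline is a reasonable high-level summary of how the twisted regularity theorem is in fact established in the literature (Clozel's work on non-connected groups and Lemaire's refinement), and you correctly isolate the technical pillars: twisted-conjugation invariance and uniqueness on the open locus; descent near a twisted-semisimple point to the ordinary adjoint action of the twisted centralizer; a twisted Howe finiteness statement leading to a germ expansion in twisted nilpotent orbital integrals; local integrability by induction on dimension; and local constancy on $G^{\trs}(F)$. Two caveats worth flagging. First, the construction of the twisted trace needs a small correction of emphasis: one takes $\tr(A\circ\pi(f))$ (or $\tr(\pi(f)\circ A)$) where $A\colon\pi\to\pi^{\theta}$ is the fixed intertwiner, and the resulting distribution depends on the choice of $A$ up to scalar, a dependence the statement explicitly records; your sketch should make this dependence visible rather than treating the distribution as canonical. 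Second, and more importantly, the passage from ``sketch'' to ``proof'' hides essentially all the work: the twisted Jacobson-Morozov stratification, the homogeneity estimates for twisted nilpotent orbital integrals, the existence of a good $\theta$-equivariant exponential (with attendant residue-characteristic restrictions), and the verification that descent is compatible with the germ expansion occupy the bulk of Lemaire's paper and cannot be dispatched by analogy with the untwisted case. Given that this is a deep black-box ingredient and the paper deliberately cites rather than reproves it, your proposal is best read as an accurate roadmap of the cited proof rather than a self-contained argument.
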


We call $\Theta_{\pi, \theta}$ the $\theta$-$\mathit{twisted}$ $\mathit{character}$ of $\pi$.
This function is invariant under $\theta$-conjugation, and depends on an isomorphism $A \colon \pi \cong \pi^\theta$ (determined up to a scalar multiple).

Similarly to the standard case, 
we have a formula for the twisted character of supercuspidal representations.

\begin{thm}[Twisted character formula, {\cite[I.6.2 $\mathrm{Th\acute{e}or\grave{e}me}$]{MR3632513}}]\label{thm:TCF}
Let $\mathbf{G}$ be a reductive group over $F$ and $\mathbf{Z}$ its center.
Let $\theta$ be an automorphism of $\mathbf{G}$ over $F$.
Let $K$ be a $\theta$-stable open subgroup of $G$ such that $K$ contains $Z$ and $K/Z$ is compact.
Let $\rho$ be a finite-dimensional $\theta$-stable irreducible smooth representation of $K$.
We fix an isomorphism $A \colon \rho \ra \rho^{\theta}$.
We assume that the representation $\pi:=\cInd_K^{G} \rho$ is irreducible, so supercuspidal.
Then, for $g \in G^{\trs}$, we have
\[
\Theta_{\pi, \theta}(g)
=\sum_{x\in K\backslash G/K}\sum_{\begin{subarray}{c} y\in KxK\\ yg\theta(y)^{-1} \in K \end{subarray}} \tr \bigl(\rho\bigl(yg\theta(y)^{-1}\bigr)\circ A\bigr),
\]
provided that the sum is finite.
Here $\Theta_{\pi, \theta}$ is the $\theta$-twisted character of $\pi$ with respect to the isomorphism $\pi=\cInd_{K}^{G} \rho \ra \cInd_{K}^{G}(\rho^{\theta}) \cong \pi^\theta$ induced by $A$.
\end{thm}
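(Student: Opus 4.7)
The plan is to imitate the classical proof of Harish-Chandra's character formula for compactly induced supercuspidal representations (Theorem \ref{thm:CF} above), inserting the intertwiner $A$ at each step and replacing ordinary conjugation by $\theta$-twisted conjugation. I would proceed in three stages.

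First, realize $\pi=\cInd_K^{G(F)}\rho$ as the space $V_\pi$ of functions $f\colon G(F)\to V_\rho$ satisfying $f(kg)=\rho(k)f(g)$ and compactly supported modulo $K$, with $G(F)$ acting by right translation. Using the $\theta$-stability of $K$ and the fixed intertwiner $A\colon\rho\to\rho^\theta$, make the intertwiner $\tilde{A}:=\cInd_K^G A\colon\pi\to\pi^\theta$ appearing in the statement explicit via a formula of the shape $(\tilde{A}f)(g)=A\bigl(f(\theta(g))\bigr)$, after fixing conventions on $\pi^\theta$.

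Second, for any $\phi\in\mathcal{H}(G)$ expand
\[
(\pi(\phi)\circ\tilde{A})f(x)=\int_{G(F)}\phi(g)\,A\bigl(f(\theta(xg))\bigr)\,dg
\]
and rewrite the outer integral as one over $K\backslash G(F)$ using the transformation law of $f$. This presents $\pi(\phi)\circ\tilde{A}$ as an integral operator on $V_\pi$ with kernel
\[
T(x,y)=\int_K\phi\bigl(x^{-1}k\,\theta(y)\bigr)\,\rho(k)\circ A\,dk\;\in\;\mathrm{End}(V_\rho).
\]
Since $\pi$ is supercuspidal, its matrix coefficients are compactly supported modulo the center, so $\pi(\phi)\circ\tilde{A}$ is of trace class and $\tr\pi_\theta(\phi)=\int_{K\backslash G(F)}\mathrm{tr}_{V_\rho}\bigl(T(y,y)\bigr)\,dy$. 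Substituting $g=y^{-1}k\,\theta(y)$ in the inner integral of $T(y,y)$ and formally exchanging the order of integration yields
\[
\tr\pi_\theta(\phi)=\int_{G(F)}\phi(g)\sum_{\substack{y\in K\backslash G(F)\\ yg\,\theta(y)^{-1}\in K}}\mathrm{tr}\bigl(\rho(yg\,\theta(y)^{-1})\circ A\bigr)\,dg,
\]
which, compared with the defining property of $\Theta_{\pi,\theta}$ from Theorem \ref{thm:tchar}, proves the formula once the support assumption $\supp(\phi)\subset G^{\trs}(F)$ is in force.

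The hard part is justifying convergence and the interchange in this last step: for each $g\in G^{\trs}(F)$ one must show that the inner sum over $y\in K\backslash G(F)$ with $yg\,\theta(y)^{-1}\in K$ is finite, and that the resulting function of $g$ is locally constant and locally bounded so that Fubini applies. In the untwisted case this follows from the regularity of $g$ together with the compactness of $K$ modulo center. Here one must leverage $\theta$-regularity and $\theta$-semisimplicity of $g$ to show that the $\theta$-twisted conjugates of $g$ lying in $K$ form a discrete set modulo the $\theta$-centralizer, the twisted counterpart of the ellipticity argument. A subsidiary technical point is bookkeeping the Jacobian of $\theta$ on the Haar measure, which is harmless for the standard outer automorphism of $\GL_{2n}$ relevant to this paper but must be tracked in general.
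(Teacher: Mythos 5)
The paper does not prove this theorem; it is cited directly from Lemaire's \emph{Caract\`eres tordus des repr\'esentations admissibles} \cite[6.2 Th\'eor\`eme]{Lemaire:2010aa}. So there is no in-paper proof to compare against, and your attempt must be judged on its own terms. That said, the strategy you outline---realize $\pi$ on a function space, exhibit $\pi(\phi)\circ\tilde{A}$ as an integral operator with an $\mathrm{End}(V_\rho)$-valued kernel over $K\backslash G(F)$, take the trace along the diagonal, then substitute $g=y^{-1}k\theta(y)$ and swap---is exactly the standard "Sally--Harish-Chandra with a twist" argument and is the right blueprint; the final identity indeed drops out once Fubini is justified.

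Two points deserve more care. First, your explicit formula $(\tilde{A}f)(g)=A\bigl(f(\theta(g))\bigr)$ for $\tilde{A}=\cInd_K^G A$ intertwines the correct actions only because $\theta$ is an involution here (on $\GL_{2n}$). With the convention $A\rho(k)=\rho(\theta(k))A$, one finds $\tilde{A}\pi(h)=\pi(\theta^{-1}(h))\tilde{A}$, so for general $\theta$ of higher order the map you write intertwines $\pi$ with $\pi^{\theta^{-1}}$ rather than $\pi^{\theta}$; you should either note that $\theta^2=1$ in the intended application or adjust the convention. You should also check that the summand $\tr\bigl(\rho(yg\theta(y)^{-1})\circ A\bigr)$ is well-defined on left $K$-cosets, which again uses the intertwining relation for $A$. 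Second---and this is the genuine mathematical content---you flag but do not resolve the convergence issue. The untwisted proof shows the sum is empty unless $g$ is elliptic, in which case compactness of $Z_G(g)$ modulo $Z$ forces finiteness; here one needs the $\theta$-analogue: for strongly $\theta$-regular $\theta$-semisimple $g$ the twisted centralizer $\mathrm{Cent}_G(g,\theta)$ is abelian, the set $\{y: yg\theta(y)^{-1}\in K\}$ is a union of finitely many $K$--$\mathrm{Cent}_G(g,\theta)(F)$ double cosets when $g$ is $\theta$-elliptic, and empty otherwise, and the resulting function of $g$ is locally constant on $G^{\trs}(F)$ so that Fubini and the comparison with $\Theta_{\pi,\theta}$ go through. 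This is precisely where Lemaire's proof (which works in greater generality, for admissible rather than just compactly induced supercuspidal $\pi$) invests its technical effort, and a complete write-up along your lines would have to supply it.
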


Let $\mathbf{G}$ be $\GL_{2n}$ over $F$.
Let $\theta$ be the automorphism of $\mathbf{G}$ over $F$ defined by
\[
\theta(g)=J\,{}^{t}\!g^{-1}J^{-1} \text{, where } 
J=\begin{pmatrix} &&&1\\&&-1&\\&\adots&&\\(-1)^{2n-1}&&&\end{pmatrix}. 
\]
For $g \in G$, we put 
\[
\mathcal{N}(g) := g\theta(g) \in G.
\]

The automorphism $\theta$ preserves the subgroups $I^+$ and $I^{++}$, and induces the automorphism of $I^+/I^{++} \cong k^{\oplus2n}$ defined by
\[
(x_1, \ldots, x_{2n-1}, x_{2n}) \mapsto 
(x_{2n-1}, \ldots, x_1, x_{2n}).
\]
In particular we have $\psi_a^\theta=\psi_a$ for $a\in k^{\times}$.
On the other hand, by a simple computation, we can check $\theta(\varphi_{a})=-\varphi_{a}^{-1}$.
Therefore 
\[
\pi_{a, \zeta}^{\theta} \cong \cInd_{ZI^+\lan\varphi_{a^{-1}}\ran}^{G} \chi_{a, \zeta}^\theta = \cInd_{ZI^+\lan\varphi_{a^{-1}}\ran}^{G} \chi_{a,\zeta^{-1}} = \pi_{a, \zeta^{-1}},
\] 
and $\pi_{a, \zeta}$ is self-dual if and only if $\zeta=\pm1$.

For $\zeta \in \{\pm1\}$, we take an isomorphism $A\colon \chi_{a,\zeta}\ra\chi_{a,\zeta}^{\theta}=\chi_{a,\zeta}$ to be the identity map.
Then this defines the twisted character $\Theta_{\pi_{a,\zeta},\theta}$ of the simple supercuspidal representation $\pi_{a,\zeta}$ of $G$.

In this subsection, we focus on the case where $a=1$.
First, we compute the twisted character $\Theta_{\pi_{1,\zeta}, \theta}$ at $g \in I^+\cap G^{\trs}$ such that $\mathcal{N}(g)=g\theta(g) \in I^+$ is affine generic.

\begin{lem}\label{lem:sumTGL}
Let $g \in I^+$ be an element such that $\mathcal{N}(g)$ is affine generic.
If $y \in G$ satisfies $yg\theta(y)^{-1} \in ZI^+\lan\varphi_{1}\ran$, then $y \in ZI\lan\varphi_{1}\ran$.
\end{lem}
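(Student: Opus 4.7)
The plan is to reduce the twisted statement to an application of Lemma \ref{lem:key}, using the affine generic element $N(g) = g\theta(g) \in I^+$ supplied by the hypothesis. The key observation is the telescoping identity
\[
yN(g)y^{-1} = yg\theta(g)y^{-1} = \bigl(yg\theta(y)^{-1}\bigr)\cdot \theta\bigl(yg\theta(y)^{-1}\bigr) = h\theta(h),
\]
where $h := yg\theta(y)^{-1} \in ZI^+\lan\varphi\ran$ by assumption. So if I can show $h\theta(h) \in I$, then Lemma \ref{lem:key} applied to the affine generic element $N(g)$ immediately yields $y \in I\Omega = ZI\lan\varphi\ran$.

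To verify $h\theta(h) \in I$, I would use the explicit action of $\theta$ on the three types of factors generating $K = ZI^+\lan\varphi\ran$. On the center one has $\theta(z) = z^{-1}$ (since $\theta$ acts by inverse-transpose up to conjugation), on the pro-unipotent radical one has $\theta(I^+) = I^+$ (noted in the paper), and $\theta(\varphi) = -\varphi^{-1}$ (also recorded in the paper immediately before the lemma). Writing $h = zu\varphi^j$ with $z \in Z$, $u \in I^+$, and $j \in \Z$, the centrality of $z$ allows the $z$ and $z^{-1}$ to cancel, giving
\[
h\theta(h) = zu\varphi^j \cdot (-1)^j z^{-1}\theta(u)\varphi^{-j} = (-1)^j \cdot u \cdot \varphi^j\theta(u)\varphi^{-j}.
\]
Since $\varphi$ normalizes $I^+$ by Lemma \ref{lem:NI}, the conjugate $\varphi^j\theta(u)\varphi^{-j}$ again lies in $I^+$, and since $-I_{2n} \in \mcO^\times \cdot I_{2n} \subset I$, the sign $(-1)^j$ is harmlessly absorbed into $I$. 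Hence $h\theta(h) \in I$, and Lemma \ref{lem:key} finishes the argument.

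The only real subtlety I anticipate is bookkeeping of the sign coming from $\theta(\varphi) = -\varphi^{-1}$; but because $-1 \in \mcO^\times$ already sits inside the Iwahori, this sign plays no substantive role, and the rest of the proof is purely formal manipulation based on the computation of $\theta$ on the generators of $K$.
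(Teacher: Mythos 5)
Your proof is correct, and the underlying idea coincides with the paper's: the identity $yN(g)y^{-1}=N(yg\theta(y)^{-1})$ is the crux, and the game is to place this conjugate of the affine generic element $N(g)$ inside the Iwahori $I$ so that Lemma \ref{lem:key} applies. The one place you diverge is in how you pass from ``$yN(g)y^{-1}$ lies in $K=ZI^+\lan\varphi\ran$'' to ``$yN(g)y^{-1}$ lies in $I$''. The paper simply observes that $K=K\theta(K)$ contains $yN(g)y^{-1}$ and then cites Lemma \ref{lem:sumGL}, whose proof disposes of the $\varphi$- and $Z$-contributions in one stroke by comparing determinants: $\det(yN(g)y^{-1})=\det(N(g))$ has valuation zero, which pins the element down to $(Z\cap T_0)I^+\subset I$. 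You instead re-derive this containment by writing $h=zu\varphi^j$ and explicitly tracking $\theta$ on each factor, using $\theta(z)=z^{-1}$, $\theta(I^+)=I^+$, $\theta(\varphi)=-\varphi^{-1}$, and absorbing the sign into $I$. Both routes are sound; the determinant shortcut is a bit cleaner (and already packaged as Lemma \ref{lem:sumGL} precisely so that this twisted lemma can be a two-line proof), while your computation has the minor merit of making the $\theta$-stability of $K$ completely concrete. Either way, the conclusion $y\in I\Omega=ZI\lan\varphi\ran$ follows from Lemma \ref{lem:key}, as you say.
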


\begin{proof}
As $yg\theta(y)^{-1} \in ZI^+\lan\varphi_{1}\ran$, $\mathcal{N}(yg\theta(y)^{-1}) = y\mathcal{N}(g)y^{-1}$ is in $\mathcal{N}(ZI^+\lan\varphi_{1}\ran) = ZI^+\lan\varphi_{1}\ran$. 
By the assumption and Lemma \ref{lem:sumGL}, $y$ must lie in $ZI\langle \varphi_{1} \rangle$.
\end{proof}

\begin{lem}\label{lem:sumTGL2}
Let $g \in I^+$ be an element such that $\mathcal{N}(g)$ is affine generic.
Then a system of representatives of the set 
\[
\bigl\{ y \in ZI^+\lan\varphi_{1}\ran\backslash ZI\lan\varphi_{1}\ran \mid yg\theta(y)^{-1} \in ZI^+\lan\varphi_{1}\ran\bigr\}
\]
is given by 
\[
T'(q) := \{ \diag(t_1, \ldots, t_{2n})\in T(q) \mid t_1 t_{2n}=\cdots=t_nt_{n+1}, t_n=1\}.
\]
\end{lem}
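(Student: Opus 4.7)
The plan is to exploit Lemma \ref{lem:sumTGL}, which already forces any $y$ with $yg\theta(y)^{-1}\in K$ to lie in $ZI\lan\varphi\ran$; thus I only need to identify which cosets in $ZI^+\lan\varphi\ran\backslash ZI\lan\varphi\ran$ actually contain such a $y$. The first step is to parametrize this coset space: since $I^+$ is normal in $I$ with quotient $T(q)$ (Proposition~\ref{prop:I}) and $\varphi$ normalizes $I^+$, the inclusion $T(q)\hookrightarrow ZI\lan\varphi\ran$ descends to a surjection $T(q)\twoheadrightarrow ZI^+\lan\varphi\ran\backslash ZI\lan\varphi\ran$ whose kernel is exactly $Z(q)=k^{\times}\cdot I_{2n}$. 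So every coset is represented by a diagonal element $y=\diag(t_1,\ldots,t_{2n})\in T(q)$, unique up to a common scalar.

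Next I would compute $yg\theta(y)^{-1}$ explicitly. Using $\theta(y)=J\,{}^t\!y^{-1}J^{-1}$ and the anti-diagonal nature of $J$, one gets $\theta(y)^{-1}=\diag(t_{2n},t_{2n-1},\ldots,t_1)$, so the $(i,j)$-entry of $yg\theta(y)^{-1}$ is $t_i\,t_{2n+1-j}\,g_{ij}$. Because every $t_i\in\mcO^{\times}$, the off-diagonal entries automatically satisfy the integrality conditions defining $I^+$ (upper triangular part in $\mcO$, strictly lower part in $\mfp$), whatever the $t_i$ are. The nontrivial constraint therefore comes only from the diagonal.

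The third step is to translate the condition $yg\theta(y)^{-1}\in K$ into a condition on the $t_i$. Since $y$, $g$, and $\theta(y)^{-1}$ all lie in $\GL_{2n}(\mcO)$, so does the product; a short computation using $\varphi^{2n}=\varpi I_{2n}$ shows
\[
K\cap\GL_{2n}(\mcO)=\mcO^{\times}\cdot I^+,
\]
so an integral element of $K$ is a scalar times an element of $I^+$, and in particular has all diagonal entries congruent modulo $\mfp$ to a single $\lambda\in k^{\times}$. Combining this with $g_{ii}\equiv 1\pmod{\mfp}$ (because $g\in I^+$) turns the membership condition into
\[
t_1 t_{2n}\equiv t_2 t_{2n-1}\equiv\cdots\equiv t_n t_{n+1}\pmod{\mfp},
\]
which, since the $t_i$ are Teichm\"uller lifts, is exactly the equality in $k^{\times}$ appearing in the definition of $T'(q)$.

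Finally, to choose a unique representative in each valid coset modulo $Z(q)$, observe that multiplying $y$ by a central scalar $s\in k^{\times}$ rescales every $t_i$ by $s$ and every product $t_i t_{2n+1-i}$ by $s^2$, so one can normalize uniquely to $t_n=1$. This produces exactly $T'(q)$. The main technical step is the identification $K\cap\GL_{2n}(\mcO)=\mcO^{\times}I^+$: even though $K$ contains negative powers of $\varphi$ and non-integral central elements, the relation $\varphi^{2n}=\varpi I_{2n}$ and the determinant valuation calculation force these factors to collapse into a scalar. Once this is in hand, the diagonal-entry analysis and the $t_n=1$ normalization are immediate.
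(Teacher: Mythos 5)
Your proof is correct and follows essentially the same route as the paper: parametrize the coset space by $T(q)$ with the normalization $t_n=1$, compute the $(i,j)$-entry of $yg\theta(y)^{-1}$ as $t_i t_{2n+1-j}g_{ij}$, observe that the off-diagonal integrality conditions hold automatically, and reduce the membership condition to the diagonal entries being congruent to a common scalar mod $\mfp$, which gives $t_1t_{2n}=\cdots=t_nt_{n+1}$. Your identification $K\cap\GL_{2n}(\mcO)=\mcO^{\times}I^+=Z(q)I^+$ is exactly the content of the paper's valuation argument $\mathrm{val}\big(\det(yg\theta(y)^{-1})\big)=0\Rightarrow yg\theta(y)^{-1}\in Z(q)I^+$, just stated more explicitly.
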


\begin{proof} 
Let $y := \diag(t_1, \ldots, t_{2n}) \in T(q)$ satisfying $t_n=1$.
Since 
\[
\mathrm{val}\bigl(\det\bigl(yg\theta(y)^{-1}\bigr)\bigr)=0,
\]
we have
\[
yg\theta(y)^{-1} \in ZI^+\lan\varphi_{1}\ran \Ra yg\theta(y)^{-1} \in Z(q)I^+
\] 
(note that $Z(q)I^+$ is the kernel of $\mathrm{val}\circ\mathrm{det}\colon ZI^+\lan\varphi_{1}\ran\ra\Z$).

Thus we have $yg\theta(y)^{-1} \in ZI^+\lan\varphi_{1}\ran$ if and only if 
the diagonal part of 
\begin{align*}
yg\theta (y)^{-1} &= 
\diag(t_{1}, \ldots, t_{2n})
\begin{pmatrix}
g_{1, 1} & \hdots & g_{1, 2n} \\
\vdots & \ddots & \vdots \\
g_{2n, 1} & \hdots & g_{2n, 2n}
\end{pmatrix}
\diag(t_{2n}, \ldots, t_1)\\
&= 
\begin{pmatrix}
 t_1t_{2n}g_{1, 1}&t_1t_{2n-1}g_{1, 2}&&\\
 &t_2t_{2n-1}g_{2, 2}&\ddots&\ast\\
 &\ast&\ddots&t_{2n-1}t_1g_{2n-1, 2n}\\
 t_{2n}^2g_{2n, 1}&&&t_{2n}t_1g_{2n, 2n}
\end{pmatrix}
\end{align*}
lies in $Z(q)T_1$.
Therefore the condition $yg\theta (y)^{-1} \in ZI^+\lan\varphi_{1}\ran$ is equivalent to the condition $t_1 t_{2n}=\cdots=t_nt_{n+1}$.
\end{proof}

\begin{prop}\label{prop:charTGL}
Let $g \in I^+\cap G^{\trs}$ be an element such that $\mathcal{N}(g)$ is affine generic.
Let $(g_1, \ldots, g_{2n})$ be the simple affine components of $g$.
Then we have 
\[
\Theta_{\pi_{1,\zeta}, \theta}(g) = \Kl_{g_n (g_1 +g_{2n-1})^2 \cdots (g_{n-1}+g_{n+1})^2 g_{2n}}^{n+1} (\psi; 1, 2, \ldots, 2, 1),
\]
where the right-hand side is the Kloosterman sum in Definition $\ref{defn:Kl}$.
\end{prop}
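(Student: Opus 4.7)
The plan is to follow the argument of Proposition~\ref{prop:charGL}, replacing the standard character formula by the twisted character formula (Theorem~\ref{thm:TCF}). With $A = \mathrm{id}$ this gives
\[
\Theta_{\pi, \theta}(g) = \sum_{y} \chi\bigl(y g \theta(y)^{-1}\bigr),
\]
where $y$ ranges over a system of representatives of cosets in $K \backslash G(F)$ with $yg\theta(y)^{-1} \in K$. By Lemma~\ref{lem:sumTGL} we may restrict $y$ to $ZI\lan\varphi\ran$, and Lemma~\ref{lem:sumTGL2} then identifies a system of representatives with $T'(q)$.

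The next step is the explicit computation of $yg\theta(y)^{-1}$ for $y = \diag(t_1, \ldots, t_{2n}) \in T'(q)$. A direct check gives $\theta(y) = \diag(t_{2n}^{-1}, \ldots, t_1^{-1})$, so the $(i,j)$-entry is $t_i g_{ij} t_{2n+1-j}$. Its diagonal part modulo $T_1$ is the scalar $t_{n+1} I_{2n} \in Z(q)$, on which $\chi$ is trivial. The affine simple components of $t_{n+1}^{-1} \, yg\theta(y)^{-1}$ are $(t_i t_{2n-i}/t_{n+1}) g_i$ for $i = 1, \ldots, 2n-1$, and $(t_{2n}^2/t_{n+1}) g_{2n}$ at the $(2n,1)$-entry. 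Using $t_n = 1$ and $t_{2n+1-i} = t_{n+1}/t_i$, the coefficients at positions $i$ and $2n-i$ both reduce to $t_k/t_{k+1}$ with $k = \min(i, 2n-i)$, pairing up into combinations $g_k + g_{2n-k}$ for $k = 1, \ldots, n-1$; the unpaired contributions are $g_n/t_{n+1}$ and $t_{n+1} g_{2n}/t_1^2$.

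Finally, reparametrize to recognize the weighted Kloosterman sum. Set $s_j := t_{j-1}/t_j$ for $j = 2, \ldots, n$, $u := 1/t_{n+1}$, and $v := t_{n+1}/t_1^2$; these $n+1$ elements of $k^\times$ satisfy the single relation $s_2^2 \cdots s_n^2 \, u v = 1$. Putting $y_1 := u g_n$, $y_j := s_j(g_{j-1} + g_{2n+1-j})$ for $j = 2, \ldots, n$, and $y_{n+1} := v g_{2n}$, the exponent inside $\psi$ becomes $y_1 + y_2 + \cdots + y_{n+1}$ while the relation translates into
\[
y_1 \, y_2^2 \cdots y_n^2 \, y_{n+1} = g_n g_{2n} \prod_{k=1}^{n-1}(g_k + g_{2n-k})^2,
\]
which is exactly the index on the Kloosterman sum with weight pattern $(1, 2, \ldots, 2, 1)$.

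The main technical obstacle is the bookkeeping of the pairing $i \leftrightarrow 2n-i$ together with the verification that the subsequent change of variables reproduces the stated constraint; this combinatorics is forced by the $\theta$-action reversing the diagonal and by the defining relation $t_i t_{2n+1-i} = t_{n+1}$ of $T'(q)$. Once these are tracked, the identification with the Kloosterman sum is immediate.
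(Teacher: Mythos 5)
Your proof is correct and follows the same route as the paper: apply the twisted character formula with the counting Lemmas~\ref{lem:sumTGL} and \ref{lem:sumTGL2} to reduce to a sum over $T'(q)$, pair the coefficients at positions $i$ and $2n-i$ using the relation $t_i t_{2n+1-i} = t_{n+1}$, and reparametrize to recognize the weighted Kloosterman sum. The only cosmetic difference is in the final bookkeeping: the paper reintroduces a variable $t_n$ via the substitution $a = t_1/t_n$ and then cyclically permutes indices, whereas you define $s_j$, $u$, $v$ and then $y_1,\ldots,y_{n+1}$ directly; both substitutions produce the same constraint $y_1 y_2^2 \cdots y_n^2 y_{n+1} = g_n g_{2n}\prod_{k=1}^{n-1}(g_k+g_{2n-k})^2$ and hence the same Kloosterman sum.
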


\begin{proof}
Note that the simple affine components of $\mathcal{N}(g)$ are given by 
\[
(g_1+g_{2n-1}, \ldots, g_{2n-1}+g_1, 2g_{2n}),
\]
and the affine genericity of $\mathcal{N}(g)$ means that none of them is zero.

By the twisted character formula (Theorem \ref{thm:TCF}) and Lemma \ref{lem:sumTGL2}, we can compute the twisted character as follows: 
\begin{align*}
&\Theta_{\pi_{1,\zeta}, \theta}(g) 
= \sum_{y \in T'(q)} \chi_{1, \zeta}\bigl(yg\theta(y)^{-1}\bigr) \\
&= \sum_{z \in k^{\times}} \sum_{\begin{subarray}{c} t_1, \ldots, t_n \in k^{\times}\\ t_i t_{2n+1-i} =z, t_n=1 \end{subarray}} 
\psi\left(\frac{t_1 t_{2n-1} g_1}{z} + \cdots + \frac{t_{2n-1} t_1 g_{2n-1}}{z} + \frac{t_{2n}^2 g_{2n}}{z}\right) \\
&= \sum_{z \in k^{\times}} \sum_{t_1, \ldots, t_{n-1} \in k^{\times}} 
\psi \left( \frac{t_1}{t_2}(g_1+g_{2n-1}) + \cdots +\frac{t_{n-1}}{1}(g_{n-1}+g_{n+1}) + \frac{1}{z}g_n + \frac{z}{t_1^2}g_{2n} \right) \\
&= \sum_{t_1, \ldots, t_n \in k^{\times}} 
\psi \left( \frac{t_1}{t_2}(g_1+g_{2n-1}) + \cdots +\frac{t_{n-1}}{1}(g_{n-1}+g_{n+1}) + \frac{t_n}{t_1}g_n + \frac{1}{t_1 t_n}g_{2n} \right) \\
&=\sum_{t_1, \ldots, t_n \in k^{\times}} 
\psi \left( \frac{t_1}{t_2}g_n + \frac{t_2}{t_3}(g_1+g_{2n-1}) + \cdots +\frac{t_n}{1}(g_{n-1}+g_{n+1}) + \frac{1}{t_1 t_2}g_{2n} \right) \\
&= {\rm Kl}_{g_n (g_1 +g_{2n-1})^2 \cdots (g_{n-1}+g_{n+1})^2 g_{2n}}^{n+1} (\psi; 1, 2, \ldots, 2, 1). 
\end{align*}
Here, for the 4th equality we replaced $z$ by $t_1/t_n$, and for the 5th equality we permuted the indices of the $t_{i}$ by $(1 \ 2 \ \cdots \ n)$.
\end{proof}

Next, we compute the twisted character $\Theta_{\pi_{1,\zeta}, \theta}$ at $\varphi_u g$, where $g \in I^+$ and $u \in k^{\times}$ such that $-\mathcal{N}(\varphi_u g)=\varphi_u g\varphi_u^{-1}\theta(g) \in I^+$ is affine generic.

\begin{lem}\label{lem:sumTGL'}
Let $g \in I^+$ be an element such that $-\mathcal{N}(\varphi_u g) \in I^+$ is affine generic.
If $y \in G$ satisfies $y\varphi_u g\theta(y)^{-1} \in ZI^+\lan\varphi_{1}\ran$, then $y \in ZI\lan\varphi_{1}\ran$.
\end{lem}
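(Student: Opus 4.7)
The plan is to imitate the proof of Lemma \ref{lem:sumTGL}, pushing the problem onto the norm element $N(\varphi_u g)$, to which Lemma \ref{lem:sumGL} can then be applied.

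First I would compute the norm of $y\varphi_u g\theta(y)^{-1}$. A direct manipulation using $\theta^2=\mathrm{id}$ gives
\[
N\bigl(y\varphi_u g\theta(y)^{-1}\bigr)
= y\varphi_u g\theta(y)^{-1}\cdot \theta(y)\theta(\varphi_u g)y^{-1}
= y\cdot N(\varphi_u g)\cdot y^{-1}.
\]
Since $K=ZI^+\lan\varphi\ran$ is $\theta$-stable (the automorphism $\theta$ preserves $Z$ and $I^+$ and sends $\varphi$ to $-\varphi^{-1}\in Z\lan\varphi\ran$), the assumption $y\varphi_u g\theta(y)^{-1}\in K$ forces $yN(\varphi_u g)y^{-1}\in K\theta(K)=K$.

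Now I invoke the hypothesis that $-N(\varphi_u g)\in I^+$ is affine generic. Multiplying by the central element $-I_{2n}\in Z$ preserves membership in $K$, so
\[
y\bigl(-N(\varphi_u g)\bigr)y^{-1}\in K=ZI^+\lan\varphi\ran.
\]
Applying Lemma \ref{lem:sumGL} with the affine generic element $-N(\varphi_u g)\in I^+$ in place of $g$ yields $y\in ZI\lan\varphi\ran$, which is exactly the desired conclusion.

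No serious obstacle is expected here: the argument is a routine twisted analogue of Lemma \ref{lem:sumTGL}, the only nontrivial point being the observation that the sign coming from $\theta(\varphi_u)=-\varphi_u^{-1}$ is absorbed by the center, so the hypothesis is framed in terms of $-N(\varphi_u g)$ being affine generic rather than $N(\varphi_u g)$ itself.
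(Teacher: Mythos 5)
Your argument is correct and is essentially the same as the paper's proof of Lemma \ref{lem:sumTGL'}: the paper also passes to the norm $N(y\varphi_u g\theta(y)^{-1}) = yN(\varphi_u g)y^{-1}$, notes it lies in $K\theta(K)=K$, and then applies Lemma \ref{lem:sumGL}. You spell out the one implicit point — that the sign discrepancy between $N(\varphi_u g)$ and the hypothesized affine generic element $-N(\varphi_u g)$ is harmless because $-I_{2n}\in Z$ — which the paper leaves unstated.
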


\begin{proof}
Since $y\varphi_u g\theta(y)^{-1} \in ZI^+\lan\varphi_{1}\ran$, $\mathcal{N}(y\varphi_u g\theta(y)^{-1}) = y\mathcal{N}(\varphi_u g)y^{-1}$ belongs to $\mathcal{N}(ZI^+\lan\varphi_{1}\ran)=ZI^+\lan\varphi_{1}\ran$.
By the assumption and Lemma \ref{lem:sumGL}, $y$ must lie in $ZI\langle \varphi_{1} \rangle$. 
\end{proof}

\begin{lem}\label{lem:sumTGL'2}
Let $g \in I^+$ be an element such that $-\mathcal{N}(\varphi_u g) \in I^+$ is affine generic.
Then a system of representatives of the set 
\[
\bigl\{ y \in ZI^+\lan\varphi_{1}\ran\backslash ZI\lan\varphi_{1}\ran \mid y\varphi_u g\theta (y)^{-1} \in ZI^+\lan\varphi_{1}\ran\bigr\}
\]
is given by 
\[
T''(q) := \{ \diag(t_1, \ldots, t_{2n})\in T(q) \mid t_1 t_{2n-1}= \cdots = t_{n-1} t_{n+1} = t_{n}^{2} = u, t_{2n}=1 \}.
\]
\end{lem}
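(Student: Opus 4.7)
The plan is to mirror the proof of Lemma \ref{lem:sumTGL2}, now accounting for the extra factor $\varphi_u$. By Lemma \ref{lem:sumTGL'}, any $y$ contributing to the set in question lies in $ZI\lan\varphi\ran$, so coset representatives of $ZI^+\lan\varphi\ran\backslash ZI\lan\varphi\ran$ may be chosen among diagonal matrices $y = \diag(t_1, \ldots, t_{2n}) \in T(q)$ modulo $Z(q)$. I would fix this scalar ambiguity by taking $t_{2n}=1$; this normalization is the one which later allows the $u$-data from the corner of $\varphi_u$ to surface cleanly in the defining condition of $T''(q)$.

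The key computation is to decide when $M := y\varphi_u g\theta(y)^{-1}$ lies in $K=ZI^+\lan\varphi\ran$. Since $\det M$ has valuation one and $\varphi_u$ represents the same class as $\varphi$ in $\widetilde{\Omega}$, this is equivalent to $M\varphi^{-1} \in ZI^+$. I would expand $M\varphi^{-1}$ directly from $\theta(y)^{-1}=\diag(t_{2n}, \ldots, t_1)$ together with the cyclic shapes of $\varphi_u$ and $\varphi^{-1}$, and verify that integrality of $M\varphi^{-1}$ is automatic: the only entries carrying $\varpi^{\pm 1}$ come from the first column or last row of $\varphi_u$ and $\varphi^{-1}$, and for those positions $g \in I^+$ supplies a compensating power of $\varpi$, so that $M\varphi^{-1}$ lands in $I$ without any condition on $y$.

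With $M\varphi^{-1}\in I$ established, membership in $ZI^+$ is equivalent to its image in $I/I^+\cong T(q)$ being a scalar in $Z(q)$. Reading off the diagonal of $M\varphi^{-1}$ modulo $1+\mfp$ gives $t_i t_{2n-i}$ at the $(i,i)$-entry for $i<2n$ and $t_{2n}^{2}u$ at the corner $(2n,2n)$-entry, so centrality amounts to the system $t_1 t_{2n-1}=\cdots=t_{2n-1}t_1=t_{2n}^{2}u$. Under the normalization $t_{2n}=1$ this is exactly the defining condition of $T''(q)$. The step that deserves the most care, and the main obstacle in the sense that any miscalculation of a sign or power of $\varpi$ would spoil it, is to check that the factor $u$ enters the diagonal only through the corner entry: it is this single equation that pins the common value of the products $t_i t_{2n-i}$ to $u$, rather than leaving it as a free parameter as in Lemma \ref{lem:sumTGL2}. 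This is the one genuine point of departure from the proof of Lemma \ref{lem:sumTGL2}.
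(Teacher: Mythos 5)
Your proposal is correct and follows essentially the same line of reasoning as the paper: reduce to diagonal representatives $y\in T(q)$ with $t_{2n}=1$ (using Lemma \ref{lem:sumTGL'}), use the determinant valuation to reduce the condition $y\varphi_ug\theta(y)^{-1}\in K$ to a statement in $Z(q)I^+$, and extract the constraint $t_1t_{2n-1}=\cdots=t_{2n-1}t_1=u$ by reading off the diagonal entries modulo $1+\mfp$. The only cosmetic difference is that you strip the $\varphi$-factor by right-multiplication (forming $M\varphi^{-1}$) rather than the paper's left-multiplication (forming $\varphi^{-1}M$), which cyclically permutes the diagonal entries without changing the resulting system of equations.
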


\begin{proof}
Let $y = \diag(t_1, \ldots, t_{2n}) \in T(q)$ satisfying $t_{2n}=1$.
Since 
\[
\mathrm{val}\bigl(\det\bigl(y\varphi_u g\theta (y)^{-1}\bigr)\bigr)=\mathrm{val}(\det(\varphi_{1})),
\]
we have
\[
y\varphi_u g\theta (y)^{-1} \in ZI^+\lan\varphi_{1}\ran \Ra \varphi_{1}^{-1}y\varphi_u g\theta (y)^{-1} \in Z(q)I^+.
\] 

Thus we have $y\varphi_u g\theta (y)^{-1} \in ZI^+\lan\varphi_{1}\ran$ if and only if the diagonal part of
\[
\varphi_{1}^{-1}y\varphi_u \cdot g \cdot \theta (y)^{-1} 
\]
\begin{align*}
&=\diag(t_{2n}u, t_1,\ldots, t_{2n-1})
\begin{pmatrix}
g_{1, 1}&\hdots&g_{1, 2n}\\
\vdots&\ddots&\vdots\\
g_{2n, 1}&\hdots&g_{2n, 2n}
\end{pmatrix}
\diag(t_{2n}, t_{2n-1}, \ldots, t_1)\\
&= 
\begin{pmatrix}
 t_{2n}^2ug_{1, 1}&t_{2n}t_{2n-1}ug_{1, 2}&&\\
 &t_1t_{2n-1}g_{2, 2}&\ddots&\ast\\
 &\ast&\ddots&t_{2n-2}t_1g_{2n-1, 2n}\\
 t_{2n-1}t_{2n}g_{2n, 1}&&&t_{2n-1}t_1g_{2n, 2n}
\end{pmatrix}
\end{align*}
lies in $Z(q)T_1$.
Therefore the condition $y\varphi_u g\theta (y)^{-1} \in ZI^+\lan\varphi_{1}\ran$ is equivalent to the condition $t_1 t_{2n-1}= \cdots = t_{n-1} t_{n+1} = t_{n}^{2} = u$.
\end{proof}

\begin{prop}\label{prop:charTGL'}
Let $g \in I^+$ be an element such that $\varphi_ug \in G^{\trs}(F)$ and $-\mathcal{N}(\varphi_u g)$ is affine generic.
Let $(g_1, \ldots, g_{2n})$ be the simple affine components of $g$.
\begin{enumerate}
\item If $u\notin k^{\times2}$, then we have 
\[
\Theta_{\pi_{1,\zeta}, \theta} (\varphi_u g)=0.
\]
\item If $u=v^2$ for some $v \in k^{\times}$, then we have 
\[
\Theta_{\pi_{1,\zeta}, \theta} (\varphi_u g) = \zeta \left( \Kl_{(v^2g_1+g_{2n})\cdots(g_n+g_{n+1})/v}^{n}(\psi) + \Kl_{-(v^2g_1+g_{2n})\cdots(g_n+g_{n+1})/v}^{n}(\psi) \right).
\]
\end{enumerate}
\end{prop}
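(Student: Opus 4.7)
The plan is to apply the twisted character formula (Theorem \ref{thm:TCF}) to $\pi = \cInd_K^{G(F)}\chi$ at $\varphi_u g$, in exact parallel with the proof of Proposition \ref{prop:charTGL}. By Lemma \ref{lem:sumTGL'}, only $y \in ZI\langle\varphi\rangle/K$ can contribute, and by Lemma \ref{lem:sumTGL'2} those satisfying $y\varphi_u g\theta(y)^{-1} \in K$ are parametrized by $T''(q)$. For part (1), the defining condition $t_n^2 = u$ of $T''(q)$ has no solution when $u \notin k^{\times 2}$, so $T''(q) = \emptyset$ and the twisted character vanishes.

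For part (2) with $u = v^2$, I will parametrize $T''(q)$ by a sign $\epsilon \in \{\pm 1\}$ with $t_n = \epsilon v$, together with free choices $t_1, \ldots, t_{n-1} \in k^\times$; the remaining $t_{n+j} = u/t_{n-j}$ and $t_{2n} = 1$ are then forced. From the matrix for $\varphi^{-1}y\varphi_u g\theta(y)^{-1}$ computed in the proof of Lemma \ref{lem:sumTGL'2}, the diagonal is $u I_{2n}$ modulo $\mfp$, so dividing by $u$ gives the $I^+$-part, whose affine simple components can be read off directly. Using the constraint $t_j t_{2n-j} = u$, the coefficient of $g_i$ simplifies to $t_{i-1}/t_i$ for $2 \le i \le 2n-1$, while the endpoints yield $(u/t_1)g_1$ (from the $(1,2)$ entry) and $g_{2n}/t_1$ (from the $(2n,1)$ entry).

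The key simplification is the palindrome symmetry induced by $T''(q)$: using $t_{n+k} = u/t_{n-k}$ together with $t_n^2 = u$, one checks that $t_{i-1}/t_i = t_{2n-i}/t_{2n+1-i}$ for $n+1 \le i \le 2n-1$. Consequently the coefficient $t_{j-1}/t_j$ is shared by $g_j$ and $g_{2n+1-j}$ for $2 \le j \le n$, pairing them into $(t_{j-1}/t_j)(g_j + g_{2n+1-j})$, while the two endpoint contributions combine into $(v^2 g_1 + g_{2n})/t_1$. Substituting $s_1 = 1/t_1$ and $s_j = t_{j-1}/t_j$ for $2 \le j \le n$, one gets $s_1 \cdots s_n = \epsilon/v$; after absorbing the constants $v^2 g_1 + g_{2n}$ and $g_j + g_{2n+1-j}$ (nonzero thanks to the affine genericity of $-N(\varphi_u g)$), the fixed-$\epsilon$ summand is recognized as $\Kl^n_{\epsilon(v^2 g_1 + g_{2n})(g_2 + g_{2n-1})\cdots(g_n + g_{n+1})/v}(\psi)$. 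Summing over $\epsilon \in \{\pm 1\}$ and factoring out $\chi(\varphi)$ yields the claimed formula. The main obstacle is bookkeeping the palindrome identification cleanly and verifying that the two endpoint terms combine exactly into $v^2 g_1 + g_{2n}$, rather than picking up a spurious factor.
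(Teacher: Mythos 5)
Your proposal is correct and follows essentially the same route as the paper: apply the twisted character formula, use Lemmas \ref{lem:sumTGL'} and \ref{lem:sumTGL'2} to reduce the sum to $T''(q)$, observe that $t_n^2=u$ forces $T''(q)=\emptyset$ when $u$ is a nonsquare, and for $u=v^2$ use $t_it_{2n-i}=v^2$ (together with $t_{2n}=1$) to simplify the diagonal coefficients and combine endpoints and mirror pairs into the $n$ factors of a Kloosterman sum with index $\pm(v^2g_1+g_{2n})(g_2+g_{2n-1})\cdots(g_n+g_{n+1})/v$. The one difference is expository: you make the palindrome identity $t_{i-1}/t_i=t_{2n-i}/t_{2n+1-i}$ explicit as a lemma, whereas the paper folds this into a single rewrite of the sum; both are the same computation.
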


\begin{proof}
Note that the simple affine components of $-\mathcal{N}(\varphi_u g)$ are given by 
\[
(g_2+g_{2n-1}, g_3+g_{2n-2}, \ldots, g_{2n-1}+g_2, u^{-1}g_{2n}+g_1, ug_1+g_{2n}),
\]
and the affine genericity of $-\mathcal{N}(\varphi_{u}g)$ means that none of them is zero.

We use the twisted character formula (Theorem \ref{thm:TCF}) and Lemma \ref{lem:sumTGL'2}.
If $u\notin k^{\times2}$, then the set $T''(q)$ is empty.
Hence the sum in the twisted character formula is zero.

If $u=v^2$ for some $v\in k^{\times}$, then we can compute the twisted character as follows: 
\begin{align*}
& \Theta_{\pi_{1,\zeta}, \theta} (\varphi_{v^2} g) 
= \sum_{y \in T''(q)} \chi_{1, \zeta}(\varphi_{1})\chi_{1, \zeta}\bigl(\varphi_{1}^{-1}y\varphi_{v^2} g\theta(y)^{-1}\bigr) \\
&= \zeta \sum_{\begin{subarray}{c} t_1, \ldots, t_{2n} \in k^{\times}\\ t_i t_{2n-i} =v^2\\ t_{2n} = 1 \end{subarray}} 
\psi \left( \frac{t_{2n}t_{2n-1}v^2g_1}{v^2} + \frac{t_1t_{2n-2}g_2}{v^2} + \cdots + \frac{t_{2n-2}t_1g_{2n-1}}{v^2} + \frac{t_{2n-1}t_{2n}g_{2n}}{v^2} \right) \\
&= \zeta \sum_{\begin{subarray}{c} t_1, \ldots, t_{n} \in k^{\times}\\ t_n=\pm v \end{subarray}} 
\psi \left( \frac{1}{t_1}(v^2g_1+g_{2n}) + \frac{t_1}{t_2}(g_2+g_{2n-1}) + \cdots + \frac{t_{n-1}}{t_n}(g_n+g_{n+1}) \right) \\
&= \zeta \left( {\rm Kl}_{(v^2g_1+g_{2n})\cdots(g_n+g_{n+1})/v}^{n}(\psi) + {\rm Kl}_{-(v^2g_1+g_{2n})\cdots(g_n+g_{n+1})/v}^{n}(\psi) \right).
\end{align*}
\end{proof}

\subsection{The case of $\SO_{2n+1}(F)$}
Let $\mathbf{H}$ be $\SO_{2n+1}$.
We take $b \in k^{\times}$ and $\xi \in \{\pm1\}$ and consider the simple supercuspidal representation $\pi'_{b,\xi}$ of $H$ defined in Section \ref{sec:ssc}.4.

First, we compute the character of $\pi'_{b,\xi}$ at an affine generic element $h \in I_H^+\cap H^{\rs}$.

\begin{lem}\label{lem:sumSO}
Let $h \in I_H^+$ be an affine generic element.
If $y \in H$ satisfies $yhy^{-1} \in I_H^+\lan\varphi'_{b^{-1}}\ran$, then $y \in I_H\lan\varphi'_{b^{-1}}\ran$.
\end{lem}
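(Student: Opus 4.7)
The strategy is to apply Lemma \ref{lem:key} to $h^2$ rather than $h$ itself, exploiting the fact that $\varphi'$ has order $2$. Lemma \ref{lem:key} yields $y \in I_H\Omega = I_H\lan\varphi'\ran$ as soon as $y g y^{-1} \in I_H$ for some affine generic $g \in I_H^+$, so it suffices to show (i) that $y h^2 y^{-1} \in I_H$, and (ii) that $h^2$ is itself affine generic. In the $\GL_N$ analogue (Lemma \ref{lem:sumGL}), one cut $I$ out of the larger set $ZI^+\lan\varphi\ran$ using the determinant, but that route is unavailable here because $\det \equiv 1$ on $\SO_{2n+1}$, which is why I would replace it by a squaring trick.

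For (i), decompose $I_H^+\lan\varphi'\ran = I_H^+ \sqcup \varphi' I_H^+$. If $y h y^{-1} \in I_H^+$ the conclusion of step (i) is immediate, so assume $y h y^{-1} = \varphi' g'$ with $g' \in I_H^+$. Using $\varphi'^2 = 1$ together with $\varphi' \in \Omega \subset N_H(I_H^+)$ (Lemma \ref{lem:NI}), one computes
\[
(y h y^{-1})^2 = \varphi' g' \varphi' g' = (\varphi' g' \varphi'^{-1}) \cdot \varphi'^2 \cdot g' = (\varphi' g' \varphi'^{-1}) \cdot g' \in I_H^+.
\]
Hence in both cases $y h^2 y^{-1} = (y h y^{-1})^2 \in I_H^+ \subset I_H$.

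For (ii), recall that the quotient $I_H^+/I_H^{++} \cong \bigoplus_{\alpha \in \Pi} U_\alpha/U_{\alpha+1}$ is an abelian $k$-vector space on which squaring in $I_H^+$ descends to multiplication by $2$. Since $p$ is odd, multiplication by $2$ is invertible on $k$, so each nonzero affine simple component $h_\alpha$ of $h$ maps to the nonzero component $2 h_\alpha$ of $h^2$. Thus $h^2$ is affine generic, and Lemma \ref{lem:key} applied to $g = h^2$ gives $y \in I_H\Omega = I_H\lan\varphi'\ran$.

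The only real subtlety is spotting that the two features $\varphi'^2 = 1$ and $p \neq 2$ conspire to let squaring play the role that the determinant played in the $\GL_N$ case; once this is recognized, both verifications above are routine, so I do not anticipate a serious obstacle in carrying out the argument.
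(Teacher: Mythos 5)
Your proof is correct and follows essentially the same route as the paper: square $yhy^{-1}$ to land in $I_H^+$, observe that $h^2$ is still affine generic because $p\neq 2$, and then invoke Lemma \ref{lem:key}. The paper states these steps more tersely (without the explicit case split on $I_H^+ \sqcup \varphi' I_H^+$), but the underlying idea — replacing the determinant argument from the $\GL_N$ case by a squaring argument — is exactly the one you identified.
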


\begin{proof}
If $yhy^{-1} \in I_H^+\lan\varphi'_{b^{-1}}\ran$, then $(yhy^{-1})^2 = yh^2y^{-1} \in I_H^+$.
Since we assumed that the characteristic of $k$ is not equal to $2$, 
$h^2$ is also affine generic.
Therefore $y \in I_H\lan\varphi'_{b^{-1}}\ran$ by Lemma \ref{lem:key}.
\end{proof}

\begin{prop}\label{prop:charSO}
Let $h \in I_H^+\cap H^{\rs}$ be an affine generic element with its simple affine components $(h_1, \ldots, h_n, h_{2n})$.
Then we have 
\[
\Theta_{\pi'_{b,\xi}}(h) = \Kl_{bh_1 h_2^2\cdots h_n^2 h_{2n}}^{n+1} (\psi; 1, 2, \ldots, 2, 1).
\]
\end{prop}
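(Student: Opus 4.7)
The plan is to apply the character formula of Theorem~\ref{thm:CF} to $\pi' = \cInd_{K'}^{H(F)} \chi'$. Since $h$ is affine generic, Lemma~\ref{lem:sumSO} ensures that only $y \in I_H\lan\varphi'\ran$ contribute to the sum in the character formula. Because $\varphi'$ normalizes both $I_H$ and $I_H^+$, the quotient $K' \backslash I_H\lan\varphi'\ran = I_H^+\lan\varphi'\ran \backslash I_H\lan\varphi'\ran$ is canonically identified with $I_H^+ \backslash I_H \cong T_H(q)$ by Proposition~\ref{prop:I}(1). I take as representatives the diagonal matrices $y = \diag(t_1, \ldots, t_n, 1, t_n^{-1}, \ldots, t_1^{-1})$ with $t_i \in k^{\times}$, each of which trivially satisfies $yhy^{-1} \in I_H^+ \subset K'$ since $T_H(q)$ normalizes $I_H^+$.

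Next I read off the affine simple components of $yhy^{-1}$. Conjugation by a diagonal matrix scales the $(i,j)$-entry of $h$ by the ratio of the corresponding diagonal entries of $y$, so a direct check gives
\[
\bigl(\tfrac{t_1}{t_2}h_1,\ \tfrac{t_2}{t_3}h_2,\ \ldots,\ \tfrac{t_{n-1}}{t_n}h_{n-1},\ t_n h_n,\ \tfrac{1}{t_1 t_2}h_{2n}\bigr).
\]
Applying $\chi'|_{I_H^+} = \psi'_b$, the character formula reduces to
\[
\Theta_{\pi'}(h) = \sum_{t_1,\ldots,t_n \in k^{\times}} \psi\!\left(\tfrac{t_1}{t_2}h_1 + \tfrac{t_2}{t_3}h_2 + \cdots + \tfrac{t_{n-1}}{t_n}h_{n-1} + t_n h_n + \tfrac{b h_{2n}}{t_1 t_2}\right).
\]

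To recognize this as a Kloosterman sum, I substitute $s_i = (t_i/t_{i+1})h_i$ for $1 \leq i \leq n-1$, $s_n = t_n h_n$, and $s_{n+1} = b h_{2n}/(t_1 t_2)$. Eliminating the $t_i$'s (using $t_n = s_n/h_n$ together with $t_i = t_{i+1} s_i/h_i$ for $i < n$ to back out each $t_i$ in terms of the $s_j$'s) produces the single constraint
\[
s_1 \cdot s_2^2 \cdots s_n^2 \cdot s_{n+1} = h_1 h_2^2 \cdots h_n^2 h_{2n} b,
\]
and the map $(t_1, \ldots, t_n) \mapsto (s_1, \ldots, s_{n+1})$ is a bijection from $(k^{\times})^n$ onto the corresponding hypersurface in $(k^{\times})^{n+1}$. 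The sum then equals $\Kl^{n+1}_{h_1 h_2^2 \cdots h_n^2 h_{2n} b}(\psi; 1, 2, \ldots, 2, 1)$, which is the claimed identity.

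The main technical point is deriving the exponent pattern $(1, 2, \ldots, 2, 1)$ correctly: each of $t_2, \ldots, t_n$ appears in exactly two of the $s_i$'s, so eliminating it squares the corresponding factor on the left, whereas $t_1$ appears only in $s_1$ and $s_{n+1}$, giving those endpoints exponent $1$. This asymmetry reflects the special role of the affine simple root $-e_1 - e_2 + 1$, which is coupled to the pair $(t_1, t_2)$ rather than to a single consecutive difference $t_i/t_{i+1}$; no analogous phenomenon appears in the $\GL_N$ calculation of Proposition~\ref{prop:charGL}.
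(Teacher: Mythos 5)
Your proof is correct and follows the same route as the paper: restrict the character-formula sum to $T_H(q)$ via Lemma~\ref{lem:sumSO} and Proposition~\ref{prop:I}, read off the affine simple components of $tht^{-1}$, apply $\psi'_b$, and reparametrize by $s_i$'s to recognize the Kloosterman sum $\Kl^{n+1}(\psi;1,2,\ldots,2,1)$. Your closing remark explaining the exponent pattern $(1,2,\ldots,2,1)$ --- that $t_1$ enters only $s_1$ and $s_{n+1}$ while each other $t_i$ enters two consecutive $s$'s, forced by the long affine root $-e_1-e_2+1$ --- is a worthwhile piece of exposition that the paper leaves implicit.
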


\begin{proof}
By the character formula (Theorem \ref{thm:CF}) and Lemma \ref{lem:sumSO}, we can compute the character as follows: 
\begin{align*}
\Theta_{\pi'_{b,\xi}}(h) &= \sum_{y \in I_H^+ \langle \varphi'_{b^{-1}} \rangle \backslash I_H \langle \varphi'_{b^{-1}} \rangle} \chi'_{b, \xi}(yhy^{-1}) 
= \sum_{t \in T_H(q)} \chi'_{b, \xi}(tht^{-1}) \\
&= \sum_{t_1, \ldots, t_n \in k^{\times}} \psi \left( \frac{t_1}{t_2}h_1 + \cdots + \frac{t_{n-1}}{t_n}h_{n-1} + \frac{t_n}{1}h_n + \frac{b}{t_1 t_2}h_{2n} \right) \\
&= \sum_{\begin{subarray}{c} s_1, \ldots, s_n, s_{2n} \in k^{\times}\\ s_1 s_2^2 \cdots s_n^2 s_{2n} = b h_1 h_2^2\cdots h_n^2 h_{2n}\end{subarray}} \psi(s_1 + \cdots + s_n+s_{2n}) \\
&= {\rm Kl}_{b h_1 h_2^2\cdots h_n^2 h_{2n}}^{n+1} (\psi; 1, 2, \ldots, 2, 1). 
\end{align*}
\end{proof}

\begin{rem}
Later, we will prove that every affine generic element of $H$ is strongly regular semisimple (Proposition \ref{prop:affgennorm}).
\end{rem}

Next, we compute the character of $\pi'_{b,\xi}$ at $\varphi'_{u}h$, where $h \in I_H^+$ and $u \in k^{\times}$ such that $(\varphi'_{u}h)^2 \in I_H^+$ is affine generic.

\begin{lem}\label{lem:sumSO'}
Let $h \in I_H^+$ be an element such that $(\varphi'_{u}h)^2$ is affine generic.
If $y \in H$ satisfies $y\varphi'_{u}hy^{-1} \in I_H^+\lan\varphi'_{b^{-1}}\ran$, then $y \in I_H\lan\varphi'_{b^{-1}}\ran$. 
\end{lem}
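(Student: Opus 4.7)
The plan is to mimic the proof of Lemma \ref{lem:sumSO}, reducing to Lemma \ref{lem:key} after passing to a square. The element $\varphi'_{b^{-1}u}h$ itself is not in $I_H^+$ (it has a $\varphi'$-component), so we cannot apply Lemma \ref{lem:key} directly; however its square lies in $I_H^+$ and is affine generic by hypothesis, which is exactly the shape Lemma \ref{lem:key} requires.

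First I would record the basic fact that squaring sends $I_H^+\lan\varphi'\ran$ into $I_H^+$. Since $\varphi'=\varphi'_{b^{-1}}$ has order two in $H(F)$ and lies in $\Omega$ (so normalizes $I_H^+$), for any $i\in I_H^+$ we have $(i\varphi')^2 = i\cdot(\varphi' i {\varphi'}^{-1})\cdot{\varphi'}^2 = i\cdot(\varphi' i{\varphi'}^{-1}) \in I_H^+$; and of course $i^2\in I_H^+$. Hence every element of $I_H^+\lan\varphi'\ran$ has its square in $I_H^+$.

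Now suppose $y\in H(F)$ satisfies $y\varphi'_{b^{-1}u}h y^{-1}\in I_H^+\lan\varphi'\ran$. Squaring both sides and using the observation above,
\[
y(\varphi'_{b^{-1}u}h)^2 y^{-1} \;=\; \bigl(y\varphi'_{b^{-1}u}h y^{-1}\bigr)^2 \;\in\; I_H^+\;\subset\; I_H.
\]
By hypothesis $(\varphi'_{b^{-1}u}h)^2\in I_H^+$ is affine generic, so Lemma \ref{lem:key} applies with $G=H$ and $g=(\varphi'_{b^{-1}u}h)^2$, yielding $y\in I_H\Omega = I_H\lan\varphi'\ran$, which is the desired conclusion.

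There is no serious obstacle here: the only thing one has to verify is the order-two/normalization fact used to control $(I_H^+\lan\varphi'\ran)^2$, after which the argument is a direct invocation of Lemma \ref{lem:key}. The role of the hypothesis is precisely to supply an affine generic element in $I_H^+$ that is $y$-conjugated into $I_H$; unlike Lemma \ref{lem:sumSO}, we cannot deduce the affine genericity of the square from that of $h$ (because $h$ itself need not be affine generic), so it must be imposed as part of the assumption, as stated.
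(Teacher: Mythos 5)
Your proof is correct and takes essentially the same route as the paper: square the conjugate, note it lands in $I_H^+$, and invoke Lemma~\ref{lem:key} using the hypothesized affine genericity of $(\varphi'_{b^{-1}u}h)^2$. The only difference is that you spell out in detail why squaring sends $I_H^+\lan\varphi'\ran$ into $I_H^+$ (using that $\varphi'$ has order two and normalizes $I_H^+$), a point the paper leaves implicit.
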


\begin{proof}
If $y\varphi'_{u}hy^{-1} \in I_H^+\lan\varphi'_{b^{-1}} \ran$, then $(y\varphi'_{u}hy^{-1})^2 = y(\varphi'_{u}h)^2y^{-1} \in  I_H^+$.
By the assumption and Lemma \ref{lem:key}, $y$ belongs to $I_H\lan\varphi' _{b^{-1}}\ran$. 
\end{proof}

\begin{lem}\label{lem:sumSO'2}
Let $h \in I_H^+$ be an element such that $(\varphi'_{u}h)^2$ is affine generic.
Then the set 
\[
\bigl\{y \in I_H^+\lan\varphi'_{b^{-1}}\ran\backslash I_H\lan\varphi'_{b^{-1}}\ran\ \mid y\varphi'_{u}hy^{-1} \in  I_H^+\lan\varphi'_{b^{-1}} \ran\bigr\}
\]
is represented by 
\[
T'_H(q) := \bigl\{ \diag(t_1, \ldots, t_n, 1, t_n^{-1}, \ldots, t_1^{-1})\in T_H(q) \mid t_1^2=bu \bigr\}.
\]
\end{lem}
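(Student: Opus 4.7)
The plan is to follow the pattern of Lemmas \ref{lem:sumTGL2} and \ref{lem:sumTGL'2}. By Lemma \ref{lem:sumSO'}, any $y$ satisfying the condition must lie in $I_H\lan\varphi'\ran$, so modulo $I_H^+\lan\varphi'\ran$ we may choose representatives from the quotient $I_H^+\backslash I_H$. By Proposition \ref{prop:I} (1) this quotient is canonically identified with $T_H(q)$. Hence it suffices to determine, for $y=\diag(t_1,\ldots,t_n,1,t_n^{-1},\ldots,t_1^{-1})\in T_H(q)$, when $y\varphi'_{b^{-1}u}hy^{-1}\in I_H^+\lan\varphi'\ran$.

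Next, I would note that $\varphi'_{b^{-1}u}$ represents the nontrivial element of $\widetilde{\Omega}\cong\Z/2\Z$ (its image under the Kottwitz map is the same as that of $\varphi'$), and conjugation by $y\in I_H$ preserves the coset $\varphi' I_H$. Thus $y\varphi'_{b^{-1}u}hy^{-1}$ automatically lies in $\varphi' I_H$, and the desired condition is equivalent to $(\varphi')^{-1}y\varphi'_{b^{-1}u}hy^{-1}\in I_H^+$. A direct matrix computation in the explicit model of $\SO_{2n+1}$ yields
\[
(\varphi')^{-1}\varphi'_{b^{-1}u}=\diag(u,1,\ldots,1,u^{-1}) =: s,
\]
and conjugation by $\varphi'$ swaps the first and last coordinates of the diagonal torus, so $(\varphi')^{-1}y\varphi'=\diag(t_1^{-1},t_2,\ldots,t_n,1,t_n^{-1},\ldots,t_2^{-1},t_1)$.

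The condition then rewrites as $(\varphi')^{-1}y\varphi'\cdot s\cdot h\cdot y^{-1}\in I_H^+$. Since $h\in I_H^+$, the off-diagonal valuation constraints defining $I_H^+$ are automatically preserved by scaling by diagonal matrices with unit entries; hence the only new constraint is that the diagonal entries of $(\varphi')^{-1}y\varphi'\cdot s\cdot h\cdot y^{-1}$ be $\equiv 1\pmod{\mfp}$. A direct multiplication shows that the middle $2n-1$ diagonal entries cancel to $1$ identically, while the first and last both reduce to the single equation $t_1^2=u$ in $k^{\times}$. This exactly describes $T'_H(q)$.

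The argument is essentially routine once Lemma \ref{lem:sumSO'} is in hand; no serious obstacle is expected. The only point requiring a bit of care is the explicit identification $(\varphi')^{-1}\varphi'_{b^{-1}u}=\diag(u,1,\ldots,1,u^{-1})$, which reduces to a small $2\times 2$ block computation using the matrix definition of $\varphi'_b$, and the verification that the off-diagonal valuation structure of $I_H^+$ is not disturbed by conjugation by elements of $T_H(\mcO)$.
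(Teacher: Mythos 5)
Your proposal is correct and takes essentially the same route as the paper: reduce to representatives in $T_H(q)$ via Lemma \ref{lem:sumSO'}, observe that the condition forces $y\varphi'_{b^{-1}u}hy^{-1}$ into the coset $\varphi'I_H^+$, and then read off the constraint $t_1^2=u$ from the diagonal part of $(\varphi')^{-1}y\varphi'_{b^{-1}u}\cdot h\cdot y^{-1}$. The paper writes the product $(\varphi')^{-1}y\varphi'_{b^{-1}u}=\diag(t_1^{-1}u,t_2,\ldots,t_1u^{-1})$ in one step while you factor out $(\varphi')^{-1}\varphi'_{b^{-1}u}=\diag(u,1,\ldots,1,u^{-1})$ first, but this is the same computation; the only minor imprecision is that the middle diagonal entries do not ``cancel to $1$ identically'' but rather become $h_{ii}$, which lie in $1+\mfp$ automatically since $h\in I_H^+$.
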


\begin{proof}
Let $y = \diag(t_1, \ldots, t_n, 1, t_n^{-1}, \ldots, t_1^{-1})\in T_H(q)$ be an element such that $y\varphi'_{u}hy^{-1} \in  I_H^+\langle \varphi'_{b^{-1}} \rangle$.
We have a decomposition $I_H^+\langle \varphi'_{b^{-1}} \rangle = I_H^+ \sqcup \varphi'_{b^{-1}}I_H^+$, 
and we can check $y\varphi'_{u}hy^{-1} \notin I_H^+$, hence $y\varphi'_{u}hy^{-1}$ belongs to $\varphi'_{b^{-1}}I_H^+$.
Thus we have $y\varphi'_{u}hy^{-1} \in  I_H^+\lan\varphi'_{b^{-1}} \ran$ if and only if 
the diagonal part of 
\[
(\varphi'_{b^{-1}})^{-1}y\varphi'_{u}\cdot h \cdot y^{-1}
\]
\begin{align*}
&=\diag(t_1^{-1}bu, t_2, t_3, \ldots)
\begin{pmatrix}
 h_{1, 1}&\hdots&h_{1, 2n+1}\\
 \vdots&\ddots &\vdots\\
 h_{2n+1, 1}&\hdots &h_{2n+1, 2n+1}
\end{pmatrix}
\diag(t_1^{-1}, t_2^{-1}, t_3^{-1}, \ldots)\\
&= 
\begin{pmatrix}
 t_1^{-2}buh_{1, 1}&t_1^{-1}t_2^{-1}buh_{1, 2}&&\\
 &h_{2, 2}&\ddots&\ast\\
  t_1^{-1}t_2^{-1}h_{2n, 1}&\ast&\ddots&t_1t_2^{-1}h_{2n, 2n+1}\\
 &t_1t_2^{-1}(bu)^{-1}h_{2n+1, 2}&&t_1^2(bu)^{-1}h_{2n+1, 2n+1}
\end{pmatrix}
\end{align*}
lies in $(T_H)_{1}$.
Therefore $y\varphi'_{u}hy^{-1} \in  I_H^+\lan\varphi'_{b^{-1}}\ran$ is equivalent to $t_1^2=bu$.
\end{proof}

\begin{prop}\label{prop:charSO'}
Let $h \in I_H^+$ be an element such that $\varphi'_{u}h \in H^{\rs}$ and $(\varphi'_{u}h)^2$ is affine generic.
Let $(h_1, \ldots, h_n, h_{2n})$ be the simple affine components of $h$.
\begin{enumerate}
 \item If $bu \notin k^{\times2}$, then we have 
 \[
 \Theta_{\pi'_{b,\xi}}(\varphi'_{u}h)=0. 
 \]
 \item If $bu=v^2$ for some $v \in k^{\times}$, then we have
 \[
 \Theta_{\pi'_{b,\xi}}(\varphi'_{u}h) = \xi \left( {\rm Kl}_{(h_1v^2+bh_{2n})h_2\cdots h_n/v}^{n}(\psi) + {\rm Kl}_{-(h_1v^2+bh_{2n})h_2\cdots h_n/v}^{n}(\psi) \right).
 \]
\end{enumerate}
\end{prop}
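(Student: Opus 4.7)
The plan is to mirror the structure of the proof of Proposition \ref{prop:charTGL'}, replacing the twisted character formula with the ordinary character formula (Theorem \ref{thm:CF}) since $\pi'$ is compactly induced from $K' = I_H^+\lan\varphi'\ran$. First I would apply Theorem \ref{thm:CF} to $g = \varphi'_{b^{-1}u}h$: by Lemma \ref{lem:sumSO'} only cosets with representatives in $I_H\lan\varphi'\ran$ contribute, and by Lemma \ref{lem:sumSO'2} a system of representatives for the contributing cosets is precisely $T'_H(q)$. This reduces the computation to
\[
\Theta_{\pi'}(\varphi'_{b^{-1}u}h) = \sum_{y \in T'_H(q)} \chi'\bigl(y\varphi'_{b^{-1}u}hy^{-1}\bigr).
\]

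For part (1), note that $T'_H(q)$ is cut out by $t_1^2 = u$, so if $u \notin k^{\times 2}$ the set is empty and the character vanishes. For part (2), with $u = v^2$, every such $y$ can be written with $t_1 = \pm v$ and $t_2, \ldots, t_n \in k^\times$ free. Since $y\varphi'_{b^{-1}u}hy^{-1} \in \varphi' I_H^+$ under the decomposition $I_H^+\lan\varphi'\ran = I_H^+ \amalg \varphi' I_H^+$, each summand factors as $\chi'(\varphi') \cdot \psi'_b\bigl(\varphi'^{-1}y\varphi'_{b^{-1}u}hy^{-1}\bigr)$. I would then read off the relevant matrix entries (the superdiagonal entries $(1,2), \ldots, (n, n+1)$ and the affine entry $(2n,1)$) of the product $\varphi'^{-1}y\varphi'_{b^{-1}u}h y^{-1}$ from the computation already performed in the proof of Lemma \ref{lem:sumSO'2}. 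The scaling factors coming from the left-right multiplication give
\[
\psi\!\left(\frac{v^2 h_1 + b h_{2n}}{t_1 t_2} + \frac{t_2}{t_3}h_2 + \cdots + \frac{t_{n-1}}{t_n}h_{n-1} + t_n h_n\right)
\]
(and a similar expression for $t_1 = -v$).

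Substituting $t_1 = \epsilon v$ for $\epsilon = \pm 1$ and then making the change of variables $s_i := t_{i+1}/t_i$ (adjusted so that the product of arguments matches the Kloosterman sum normalization) should exhibit each of the two sums as the Kloosterman sum $\Kl^n_{\epsilon (v^2 h_1 + b h_{2n}) h_2 \cdots h_n / v}(\psi)$ by Definition \ref{defn:Kl}. Combining the two contributions (from $t_1 = v$ and $t_1 = -v$) then yields the stated formula.

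The main obstacle is purely bookkeeping: carefully tracking the diagonal scaling factors induced by conjugation with the diagonal matrix $L = \varphi'^{-1}y\varphi'_{b^{-1}u}$ (whose first diagonal entry is $t_1^{-1}u$ rather than $t_1^{-1}$), keeping straight the indexing convention for the torus of $\SO_{2n+1}$, and verifying that after the change of variables the resulting $n$-fold sum has the correct product of arguments to match the definition of $\Kl^n$. Once the entries are written out, the combinatorics is a direct analogue of the $\GL$ case in Proposition \ref{prop:charTGL'}.
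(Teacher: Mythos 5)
Your proposal is correct and follows essentially the same route as the paper: apply the character formula (Theorem \ref{thm:CF}), reduce the coset sum to $T'_H(q)$ via Lemmas \ref{lem:sumSO'} and \ref{lem:sumSO'2}, factor out $\chi'(\varphi')$, read off the affine simple components of $\varphi'^{-1}y\varphi'_{b^{-1}u}hy^{-1}$ from the diagonal scalings already worked out in the proof of Lemma \ref{lem:sumSO'2}, and identify each of the two sums (for $t_1 = v$ and $t_1 = -v$) as a Kloosterman sum by the telescoping change of variables $s_1 = (v^2h_1 + bh_{2n})/(t_1t_2)$, $s_i = (t_i/t_{i+1})h_i$ for $2\le i\le n-1$, $s_n = t_nh_n$. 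The intermediate expression you wrote agrees exactly with the paper's, and your bookkeeping caveat is the only real work remaining.
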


\begin{proof}
Note that the simple affine components of $(\varphi'_{u}h)^2$ are given by 
\[
(h_{2n}u^{-1}+h_1, 2h_2, \ldots, 2h_n, h_1u+h_{2n}),
\]
and the affine genericity of $(\varphi_{u}'h)^{2}$ means that none of them is zero.

We use the character formula (Theorem \ref{thm:CF}) and Lemma \ref{lem:sumSO'2}.
If $bu\notin k^{\times2}$, then the set $T'_H(q)$ is empty.
Hence the sum in the character formula is zero.

If $bu=v^2$ for some $v\in k^{\times}$, then we can compute the character as follows: 
\begin{align*}
\Theta_{\pi'_{b,\xi}}(\varphi'_{b^{-1}v^2}h) 
&= \sum_{y \in T'_H(q)} \chi'_{b, \xi}(\varphi'_{b^{-1}})\chi'_{b, \xi}\bigl((\varphi'_{b^{-1}})^{-1}y\varphi'_{b^{-1}v^2}hy^{-1}\bigr) \\
&= \xi \sum_{\begin{subarray}{c} t_1, \ldots, t_{n} \in k^{\times}\\ t_1 = \pm v \end{subarray}} 
\psi \left( \frac{1}{t_1t_2}h_1v^2 + \frac{t_2}{t_3}h_2 + \cdots + \frac{t_n}{1}h_n + \frac{b}{t_1t_2}h_{2n} \right) \\
&= \xi \left( {\rm Kl}_{(h_1v^2+bh_{2n})h_2\cdots h_n/v}^{n}(\psi) + {\rm Kl}_{-(h_1v^2+bh_{2n})h_2\cdots h_n/v}^{n}(\psi) \right).
\end{align*} 
\end{proof}

\section{Norm correspondences}\label{sec:norm}
\subsection{Norm correspondences}

Let us first recall the norm correspondence for twisted endoscopy.
Our basic reference in this section is \cite{MR1687096}.

Let $\mathbf{G}$ be a connected quasi-split reductive group over $F$, and $\theta$ an automorphism of $\mathbf{G}$ defined over $F$.
Let $(\mathbf{H}, {}^{L}\mathbf{H}, s, \xi)$ be a quadruple of endoscopic data for the triple $(\mathbf{G}, \theta, 1)$.
Then we have the map 
\[
\mathcal{A}_{\mathbf{H}/\mathbf{G}} \colon Cl_\mathrm{ss}(\mathbf{H}) \ra Cl_{\theta\mathchar`-\mathrm{ss}}(\mathbf{G}, \theta) 
\]
from the set of semisimple conjugacy classes in $\mathbf{H}(\ol{F})$ to the set of $\theta$-semisimple $\theta$-conjugacy classes in $\mathbf{G}(\ol{F})$ (see Section 3.3 in \cite{MR1687096}).

A $\theta$-semisimple element $g\in \mathbf{G}(\ol{F})$ is said to be $\mathit{strongly}$ $\theta$-$\mathit{regular}$ if the $\theta$-centralizer $\Cent_{\mathbf{G}}(g, \theta)$ of $g$ in $\mathbf{G}$ is abelian.
Let $G^{\strs}$ be the set of strongly $\theta$-regular $\theta$-semisimple elements in $G$, and $H^{\srs}$ the set of strongly regular semisimple elements in $H$.
We say that $y\in H^{\srs}$ is a $\mathit{norm}$ of $x\in G^{\strs}$ if $x$ corresponds to $y$ via the map $\mathcal{A}_{\mathbf{H}/\mathbf{G}}$.

In this paper, we consider the following situation:
\begin{itemize}
 \item $\mathbf{G}=\GL_{2n}$ over $F$, 
 \item $\theta(g)=J\,{}^{t}\!g^{-1}J^{-1}$, where \[J=\begin{pmatrix} &&&1\\&&-1&\\&\adots&&\\(-1)^{2n-1}&&&\end{pmatrix},\] 
 \item $\mathbf{H}=\SO_{2n+1}$ over $F$, 
 \item $s=1$, and 
 \item $\xi \colon {}^L\!\SO_{2n+1}=\Sp_{2n}(\C)\times W_F \hookrightarrow \GL_{2n}(\C)\times W_F={}^L\!\GL_{2n}$.
\end{itemize}
For $g \in \mathbf{G}(\ol{F})$, we put 
\[
\mathcal{N}(g):=g\theta(g) \in \mathbf{G}(\ol{F}).
\]

We can write the map $\mathcal{A}_{\mathbf{H}/\mathbf{G}}$ explicitly in terms of the diagonal tori $\mathbf{T}$ and $\mathbf{T}_{\mathbf{H}}$ as follows (note that every $\theta$-semisimple element is $\theta$-conjugate to an element of $\mathbf{T}(\ol{F})$, see \cite[Lemma 3.2.A]{MR1687096})
:
\[\xymatrix{
 Cl_\mathrm{ss}(\mathbf{H})& 
 \mathbf{T}_{\mathbf{H}}(\ol{F})/\Omega_{\mathbf{T}_{\mathbf{H}}} \ar[l]_-{\cong} \ar[r]^-{\cong}& 
 \mathbf{T}_{\theta}(\ol{F})/\Omega_{\mathbf{T}}^\theta \ar^-{\cong}[r]& 
 Cl_{\theta\mathrm{\mathchar`-ss}}(\mathbf{G}, \theta) &
 \mathbf{T}(\ol{F}) \ar@{->>}[l]
}\]
\[
\diag\left(\frac{t_1}{t_{2n}}, \ldots, \frac{t_n}{t_{n+1}}, 1, \frac{t_{n+1}}{t_n}, \ldots, \frac{t_{2n}}{t_1}\right) \longleftrightarrow \diag(t_1, \ldots, t_{2n}),
\]
where $\Omega_{\mathbf{T}_{\mathbf{H}}}$ is the Weyl group of $\mathbf{T}_{\mathbf{H}}$ in $\mathbf{H}$, 
$\Omega_{\mathbf{T}}^{\theta}$ is the $\theta$-fixed part of the Weyl group of $\mathbf{T}$ in $\mathbf{G}$, and 
$\mathbf{T}_{\theta}$ is the $\theta$-coinvariant of $\mathbf{T}$.
Here note that the map $\mathcal{A}_{\mathbf{H}/\mathbf{G}}$ is an isomorphism 
since $\mathbf{T}_{\mathbf{H}}(\ol{F}) \cong \mathbf{T}_{\theta}(\ol{F})$ and $\Omega_{\mathbf{T}_{\mathbf{H}}} \cong \Omega_{\mathbf{T}}^{\theta}$ in our situation.
As a consequence of this explicit description of the map $\mathcal{A}_{\mathbf{H}/\mathbf{G}}$, we get the following two lemmas.

\begin{lem}\label{lem:eig}
For a $\theta$-semisimple element $g$ of $\mathbf{G}(\ol{F})$ and a semisimple element $h$ of $\mathbf{H}(\ol{F})$, 
let $\mathrm{Eig}(\mathcal{N}(g))$ and $\mathrm{Eig}(h)$ be the multi-sets consisting of eigenvalues of $\mathcal{N}(g)=g\theta(g)$ and $h$, respectively.
Then $h$ corresponds to $g$ via $\mathcal{A}_{\mathbf{H}/\mathbf{G}}$ if and only if we have
\[
\mathrm{Eig}\bigl(\mathcal{N}(g)\bigr)\sqcup\{1\}=\mathrm{Eig}(h).
\]
\end{lem}

\begin{proof}
We take $t=\diag(t_1, \ldots, t_{2n})\in \mathbf{T}(\ol{F})$ which is  $\theta$-conjugate to $g$ in $\mathbf{G}(\ol{F})$, and $s=\diag(s_1, \ldots, s_{n}, 1, s_{n}^{-1}, \ldots, s_{1}^{-1})\in \mathbf{T}_{\mathbf{H}}(\ol{F})$ which is conjugate to $h$ in $\mathbf{H}(\ol{F})$.
Then, by the above description of $\mathcal{A}_{\mathbf{H}/\mathbf{G}}$, $h$ corresponds to $g$ if and only if $s$ is conjugate to 
\[
\diag\left(\frac{t_1}{t_{2n}}, \ldots, \frac{t_n}{t_{n+1}}, 1, \frac{t_{n+1}}{t_n}, \ldots, \frac{t_{2n}}{t_1}\right)
\]
in $\mathbf{H}(\ol{F})=\SO_{2n+1}(\ol{F})$.
However, this is equivalent to saying that they are conjugate in $\GL_{2n+1}(\ol{F})$ (see, for example, \cite[IV.2.15]{MR0268192}), hence equivalent to saying that we have the following equality of multi-sets:
\[
\biggl\{\frac{t_1}{t_{2n}}, \ldots, \frac{t_n}{t_{n+1}}, 1, \frac{t_{n+1}}{t_n}, \ldots, \frac{t_{2n}}{t_1}\biggr\}=\{s_1, \ldots, s_{n}, 1, s_{n}^{-1}, \ldots, s_{1}^{-1}\}=\mathrm{Eig}(h).
\]

If we take $x\in \mathbf{G}(\ol{F})$ such that $xg\theta(x)^{-1}=t$, then we have
\[
\mathcal{N}\bigl(xg\theta(x)^{-1}\bigr)=xg\theta(x)^{-1}\cdot\theta\bigl(xg\theta(x)^{-1}\bigr)=x\mathcal{N}(g)x^{-1}.
\]
On the other hand, we have 
\[
\mathcal{N}(t)=\diag\biggl(\frac{t_1}{t_{2n}}, \ldots, \frac{t_n}{t_{n+1}}, \frac{t_{n+1}}{t_n}, \ldots, \frac{t_{2n}}{t_1}\biggr).
\]
Thus the multi-set $\mathrm{Eig}(\mathcal{N}(g))$ is given by
\[
\biggl\{\frac{t_1}{t_{2n}}, \ldots, \frac{t_n}{t_{n+1}}, \frac{t_{n+1}}{t_n}, \ldots, \frac{t_{2n}}{t_1}\biggr\},
\]
and we get the assertion.
\end{proof}

\begin{lem}\label{lem:norm1}
Let $g$ be a $\theta$-semisimple element in $\mathbf{G}(\ol{F})$ and $h$ a semisimple element in $\mathbf{H}(\ol{F})$.
If $h$ is conjugate to a matrix 
\[
\begin{pmatrix}
 1&\ast\\
 0&\mathcal{N}(g)
\end{pmatrix}
\]
in $\GL_{2n+1}(\ol{F})$, then $h$ corresponds to $g$ via $\mathcal{A}_{\mathbf{H}/\mathbf{G}}$.
\end{lem}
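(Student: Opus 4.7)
The plan is to compare eigenvalues via the explicit description of $\mathcal{A}_{H/G}$ recalled in the display just before the lemma, which reduces the whole statement to a direct matching of diagonal entries on the tori $T_0$ and $T_{H,0}$.

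First, I would use that $x$ is $\theta$-semisimple and that the diagonal torus $T_0$ is $\theta$-stable to $\theta$-conjugate $x$ into $T_0(\ol F)$ and write $x = \diag(t_1, \ldots, t_{2n})$. A direct calculation with $\theta(g) = J\,{}^t\!g^{-1}J^{-1}$, using that conjugation by the antidiagonal matrix $J$ reverses the diagonal, gives
\[
N(x) = x\,\theta(x) = \diag\bigl(t_1/t_{2n},\, t_2/t_{2n-1},\, \ldots,\, t_{2n}/t_1\bigr).
\]
Note also that $N$ descends to a well-defined map $(T_0)_\theta \to T_0/\mathrm{conj}$, so this computation is insensitive to the choice of $\theta$-conjugate.

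Second, by hypothesis $y$ is $\GL_{2n+1}(\ol F)$-conjugate to a block-upper-triangular matrix with diagonal blocks $1$ and $N(x)$, so its eigenvalue multiset is $\{1\} \cup \{t_i/t_{2n+1-i} \mid 1 \le i \le 2n\}$. Since $y$ is semisimple in $H(\ol F)$, after $H(\ol F)$-conjugation I may assume $y \in T_{H, 0}(\ol F)$, and its diagonal then lists precisely this multiset in some order. The multiset comes naturally in reciprocal pairs $\{t_i/t_{2n+1-i},\, t_{2n+1-i}/t_i\}$ together with the central $1$; since $\Omega_{T_{H,0}}$ is the hyperoctahedral group $(\Z/2\Z)^n \rtimes S_n$, acting by arbitrary permutations and arbitrary sign changes (i.e.\ reciprocal inversions) on the first $n$ diagonal entries, I can further conjugate within $H(\ol F)$ to achieve
\[
y = \diag\bigl(t_1/t_{2n},\, \ldots,\, t_n/t_{n+1},\, 1,\, t_{n+1}/t_n,\, \ldots,\, t_{2n}/t_1\bigr).
\]

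Finally, this is exactly the image of $\diag(t_1, \ldots, t_{2n}) \in T_0(\ol F)$ under the explicit isomorphism $\mathcal{A}_{H/G}$ recalled at the start of the subsection, so $y$ and $x$ correspond under the norm map. The only step that needs genuine care is the last bit of the previous paragraph, namely that the hyperoctahedral Weyl-group action is large enough to realise the required reciprocal pairing; this is where one uses that $H$ is of type $B_n$ rather than type $D_n$, but the verification is entirely combinatorial and poses no real obstacle.
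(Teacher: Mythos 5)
Your proof is correct, and it runs parallel to the paper's argument up through the eigenvalue comparison: both $\theta$-conjugate $x$ into $T_0(\ol F)$, compute $N(x)=\diag(t_1/t_{2n},\ldots,t_{2n}/t_1)$, and observe that the eigenvalue multiset of $y$ must therefore be $\{1\}\cup\{t_i/t_{2n+1-i}\}$, which is the eigenvalue multiset of $t_H$. Where you diverge is the final step. The paper simply cites Springer--Steinberg \cite[4.2]{MR0268192} for the fact that two semisimple elements of $\SO_{2n+1}(\ol F)$ that are conjugate in $\GL_{2n+1}(\ol F)$ are already conjugate in $\SO_{2n+1}(\ol F)$; this immediately moves $y$ to $t_H$ inside $\SO_{2n+1}(\ol F)$. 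You instead prove this implication by hand: conjugate $y$ into $T_{H,0}(\ol F)$ and use the full hyperoctahedral Weyl group $(\Z/2\Z)^n\rtimes S_n$ to re-pair and reorder the eigenvalues. Both routes are valid. Yours is more elementary and makes visible exactly why the statement would fail for $\SO_{2n}$ (where the Weyl group only allows an even number of sign changes), which is a genuine gain in transparency. The price is that the combinatorial claim --- that equality of the $\GL_{2n+1}$-eigenvalue multisets forces Weyl-conjugacy of the two tuples $(s_1,\ldots,s_n)$ --- which you flag as ``entirely combinatorial'', does deserve a few lines. It amounts to noting that, for each value $\lambda$, the multiset only determines the sum $\#\{i: s_i=\lambda\}+\#\{i: s_i=\lambda^{-1}\}$, and that the sign-change part of $W$ lets you redistribute freely between the two terms; spelling this out (including the special cases $\lambda=\pm1$) would make the argument airtight. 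The paper's citation avoids this verification at the cost of invoking a heavier result.
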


\begin{proof}
This follows from Lemma \ref{lem:eig} immediately.
\end{proof}

Let us prove some lemmas needed later.

\begin{lem}\label{lem:norm0}
Let $g$ be a $\theta$-semisimple element in $\mathbf{G}(\ol{F})$ and $h$ a semisimple element in $\mathbf{H}(\ol{F})$ which corresponds to $h$ via $\mathcal{A}_{\mathbf{H}/\mathbf{G}}$.
If $\mathcal{N}(g)=g\theta(g) \in \mathbf{G}(\ol{F})$ is strongly regular semisimple, then $g$ is strongly $\theta$-regular and $h$ is strongly regular.
\end{lem}

\begin{proof}
As $\mathcal{N}(g)$ is strongly regular semisimple, the centralizer $\Cent_{\mathbf{G}}(\mathcal{N}(g))$ of $\mathcal{N}(g)$ in $\mathbf{G}$ is a maximal torus of $\mathbf{G}$, in particular abelian.
Since $\Cent_{\mathbf{G}}(g, \theta)$ is contained in $\Cent_{\mathbf{G}}(\mathcal{N}(g))$, $\Cent_{\mathbf{G}}(g, \theta)$ is abelian.
Thus $g$ is strongly $\theta$-regular.
By the assumption that $h$ corresponds to $g$, $h$ is also strongly regular (\cite[Lemma 3.3.C.]{MR1687096}). 
\end{proof}

\begin{lem}\label{lem:uniquenorm}
Let $g \in G^{\strs}$.
Then $g$ has at most one norm in $H^{\srs}$ up to stable conjugacy.
\end{lem}

\begin{proof}
Let $h, h' \in H^{\srs}$ be norms of $g$.
In our situation, the map $\mathcal{A}_{\mathbf{H}/\mathbf{G}}$ is bijective, hence $h$ and $h'$ are conjugate in $\mathbf{H}(\ol{F})$.
Since $h$ and $h'$ are strongly regular, they are stably conjugate.
\end{proof}

%

\begin{lem}\label{lem:norm2}
Let $h$ be a strongly regular semisimple elliptic element in $H$.
Then there exists a strongly $\theta$-regular $\theta$-semisimple $\theta$-elliptic element $g \in G$ such that $h$ is a norm of $g$.
\end{lem}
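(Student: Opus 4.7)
The plan is to combine Lemmas~\ref{lem:norm0} and~\ref{lem:norm1} with an explicit construction of a $\theta$-stable elliptic maximal torus on the $G$-side. By Lemma~\ref{lem:norm1}, to exhibit $h$ as a norm of some $g \in G(F)$ it suffices to produce $g \in G(F)$ such that $N(g) = g\theta(g)$ has the same characteristic polynomial as the ``non-$1$'' part of $h$. Write the characteristic polynomial of $h$ in $\GL_{2n+1}$ as $(x-1)P(x)$, where $P(x) \in F[x]$ is palindromic of degree $2n$, separable, and coprime to $x-1$ (since $h$ is strongly regular semisimple). By Lemma~\ref{lem:norm0}, such a $g$ will automatically be strongly $\theta$-regular as soon as it is $\theta$-semisimple, because $N(g)$ is regular once $P$ is separable.

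To obtain $\theta$-ellipticity as well, I would work inside a $\theta$-stable maximal torus $T \subset G$ matched to the centralizer $T_H := \Cent_H(h)$. Ellipticity of $h$ means that $T_H$ is an anisotropic maximal torus of $H$; it is classified by an \'etale $F$-algebra $K$ of dimension $2n$ with an involution $\tau$ whose fixed subalgebra $K^\tau$ has dimension $n$, with functorial description $T_H(R) = \{y \in (K\otimes_F R)^\times \mid y\tau(y) = 1\}$, and $h$ corresponds to a distinguished $h_K \in K^\times$. Embed $T := \mathrm{Res}_{K/F}(\mathbb{G}_m) \hookrightarrow \GL_{2n}$ via an $F$-basis of $K$ chosen self-dual with respect to the pairing $(x,y)\mapsto \tr_{K/F}(x\tau(y))$, suitably normalized against $J$. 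This makes $T$ into a $\theta$-stable maximal torus of $G$ with $\theta|_T$ acting on $T(F)=K^\times$ as $\alpha\mapsto\tau(\alpha)^{-1}$, so that the twisted norm becomes $N(\alpha) = \alpha/\tau(\alpha)$ and takes values in $T^\theta \cong T_H$.

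Hilbert~90 applied to the \'etale quadratic cover $K^\tau \hookrightarrow K$ gives surjectivity of $K^\times \twoheadrightarrow T_H(F)$, so $h_K$ lifts to some $g \in T(F) = K^\times \subset G(F)$ with $N(g) = h_K$. The characteristic polynomial of $N(g)$ coincides with that of multiplication by $h_K$ on $K$, which is $P(x)$; hence $h$ is conjugate in $\GL_{2n+1}(\ol{F})$ to $\diag(1, N(g))$, and Lemma~\ref{lem:norm1} concludes that $h$ is a norm of $g$. For $\theta$-ellipticity, $\Cent_G(g,\theta)^\circ$ equals $T^{\theta,\circ} \cong T_H$, which is anisotropic; this is exactly the $\theta$-ellipticity of $g$.

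The main technical hurdle is the matching in the second step: identifying a $\theta$-stable maximal torus $T \subset \GL_{2n}$ associated to $(K,\tau)$ in such a way that the induced involution on $T$ is precisely $\alpha\mapsto\tau(\alpha)^{-1}$. This is a concrete linear-algebra computation with the anti-diagonal form $J$ and a basis adapted to $\tau$; once it is in place, the surjectivity needed to produce $g$ reduces to classical Hilbert~90 for the \'etale cover $K^\tau \hookrightarrow K$, and the identification of centralizers is routine.
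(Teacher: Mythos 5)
Your approach is correct in substance but genuinely different from the paper's. The paper disposes of this lemma in a single sentence by invoking the adjoint relation of the transfer factor for strongly regular elliptic elements, citing the proof of \cite[Proposition 2.1.1]{MR3135650}; it is an existence statement deduced from Arthur's analytic machinery. You instead give a direct construction: classify the elliptic torus $T_H=\Cent_H(h)$ by an \'etale algebra $(K,\tau)$ of degree $2n$ with $[K:K^\tau]=2$, build a $\theta$-stable maximal torus $T=\mathrm{Res}_{K/F}\mathbb{G}_m\subset\GL_{2n}$ so that $\theta|_T$ is $\alpha\mapsto\tau(\alpha)^{-1}$, then apply Hilbert 90 for $K/K^\tau$ to lift $h$ to $g\in T(F)$ with $g\theta(g)=h_K$, and conclude via Lemmas \ref{lem:norm0} and \ref{lem:norm1}. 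This is more elementary (no transfer factors) and has the virtue of producing $g$ and its twisted centralizer explicitly, which is useful if one later wants to compute with $g$; the price is the torus-matching computation. One small correction in that step: the pairing $(x,y)\mapsto\tr_{K/F}(x\tau(y))$ you wrote down is symmetric, whereas $J$ is alternating, so it cannot be directly matched to $J$. You should instead use $(x,y)\mapsto\tr_{K/F}(\delta\,x\,\tau(y))$ for some $\delta\in K^\times$ with $\tau(\delta)=-\delta$; such $\delta$ exists precisely because $\tau$ is nontrivial on every factor of $K$ (equivalently because $T_H$ is elliptic), and this form is then nondegenerate alternating, hence isometric to $J$ over $F$. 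After this adjustment the adjoint of $m_\alpha$ is $m_{\tau(\alpha)}$, giving $\theta(m_\alpha)=m_{\tau(\alpha)^{-1}}$ as you want. You should also note explicitly that the resulting $g$ is $\theta$-semisimple before invoking Lemma \ref{lem:norm1}; this follows from Lemma \ref{lem:tss}, since $g$ is semisimple, $g$ and $\theta(g)$ commute (both lie in the abelian $T$), and $N(g)$ is regular semisimple. With these two points tidied, the argument goes through, and the identification $\Cent_G(g,\theta)^{0}=(T^\theta)^{0}=T_H$ gives $\theta$-ellipticity as you say.
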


\begin{proof}
This follows from the adjoint relation of the transfer factor for strongly regular semisimple elliptic elements (see the proof of \cite[Proposition 2.1.1]{MR3135650}).
\end{proof}

\subsection{Norm correspondences for affine generic elements}
In this subsection, we study the norm correspondence for affine generic elements.
We use the same notations as in Subsections 2.3 and 2.4.

\begin{lem}\label{lem:partialdiag}
Let $h \in I_H^+ \subseteq H$ be an affine generic element with its simple affine components $(h_1, \ldots, h_n, h_{2n})$.
Then $h$ is conjugate to a matrix 
\[
\begin{pmatrix}
 1 & \ast \\
 0 & h'
\end{pmatrix}
\]
in $\GL_{2n+1}(F)$, where $h'$ is an affine generic element of $G$ with its simple affine components $(h_2, \ldots, h_n, h_n, \ldots, h_1, 2h_{2n})$.
\end{lem}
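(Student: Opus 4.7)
The plan is to produce an $F$-rational $1$-eigenvector $v \in \mcO^{2n+1}$ of $h$ whose reduction is $e_1$, conjugate $h$ by the matrix $P := [v \mid e_2 \mid \cdots \mid e_{2n+1}] \in \GL_{2n+1}(\mcO)$ to achieve the block upper-triangular form, and then read off the affine simple components of the bottom-right block $h'$ by direct computation modulo $\mfp$ (and modulo $\mfp^2$ for the $(2n,1)$-entry).

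First I would establish a structural fact about $\bar h$: its super-diagonal entries read $(h_1, \ldots, h_n, h_n, h_{n-1}, \ldots, h_1)$, and both $h_{2n, 1}$ and $h_{2n+1, 2}$ reduce to $\varpi h_{2n}$ modulo $\mfp^2$. Indeed, the root subgroups $U_\alpha$ for $\alpha \in \Pi$ can be computed explicitly from $X^T J + J X = 0$: the generator of $U_{e_i - e_{i+1}}$ (for $i < n$) has equal nonzero entries at $(i, i+1)$ and $(2n+1-i, 2n+2-i)$, that of $U_{e_n}$ has equal nonzero entries at $(n, n+1)$ and $(n+1, n+2)$, and that of $U_{-e_1 - e_2}$ has equal nonzero entries at $(2n, 1)$ and $(2n+1, 2)$ with vanishing square. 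Thus $\bar h$ is upper-triangular unipotent with the stated super-diagonal, and by affine genericity $\bar h - I$ is regular nilpotent with one-dimensional kernel $k e_1$. The same calculation shows that the sub-matrix $M_0 := (h - I_{2n+1})[1..2n,\, 2..2n+1]$ is upper-triangular modulo $\mfp$ with invertible diagonal, hence invertible over $\mcO$. Solving $M_0 (v_2, \ldots, v_{2n+1})^T = -(h - I_{2n+1})[1..2n,\, 1]$ (whose right-hand side lies in $\mfp^{2n}$) gives a unique $v = e_1 + w$ with $w_i \in \mfp$ for $i \geq 2$; the last row equation is automatic because $h \in \SO_{2n+1}(F)$ has $1$ as an eigenvalue, so $\mathrm{rank}(h - I_{2n+1}) \leq 2n$.

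With $v$ in hand, set $P := I + w e_1^T$. Since $w_1 = 0$, one has $(w e_1^T)^2 = 0$, so $P^{-1} = I - w e_1^T$, and a direct expansion yields $(P^{-1}hP)_{i, 1} = 0$ for $i \geq 2$ (because $P^{-1} h v = P^{-1} v = e_1$) together with
\[
h'_{k, l} := (P^{-1}hP)_{k+1, l+1} = h_{k+1, l+1} - w_{k+1} h_{1, l+1}
\]
for $k, l \in \{1, \ldots, 2n\}$. For the super-diagonal, the correction $w_{k+1} h_{1, l+1}$ lies in $\mfp$, so $\overline{h'_{k, k+1}} = \overline{h_{k+1, k+2}}$, which by the structural fact gives exactly $(h_2, \ldots, h_n, h_n, h_{n-1}, \ldots, h_1)$.

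The main delicate point is the $(2n, 1)$ component, requiring $w_{2n+1}$ and $h'_{2n, 1}$ modulo $\mfp^2$. Since $M_0$ is upper-triangular modulo $\mfp$ with last diagonal entry reducing to $h_1$, Cramer's rule gives that the last row of $M_0^{-1}$ is $(0, \ldots, 0, h_1^{-1}) + O(\mfp)$; the last entry of the right-hand side is $-h_{2n, 1} \equiv -\varpi h_{2n} \pmod{\mfp^2}$, so
\[
w_{2n+1} \equiv -\varpi h_{2n}/h_1 \pmod{\mfp^2}.
\]
Combined with $h_{2n+1, 2} \equiv \varpi h_{2n} \pmod{\mfp^2}$ from the structural analysis and $h_{1, 2} \equiv h_1 \pmod{\mfp}$, we obtain
\[
h'_{2n, 1} = h_{2n+1, 2} - w_{2n+1} h_{1, 2} \equiv \varpi h_{2n} + \varpi h_{2n} = 2 \varpi h_{2n} \pmod{\mfp^2},
\]
yielding the final affine simple component $2 h_{2n}$. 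The hard part is this valuation bookkeeping that produces the factor $2$; everything else is routine linear algebra.
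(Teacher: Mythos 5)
Your proposal is correct and follows essentially the same route as the paper's proof: both reduce to finding the $1$-eigenvector $v \in \mcO^{2n+1}$ normalized so $v_1 = 1$, conjugate by $I + w e_1^T$, and extract the factor $2$ from the congruence $v_{2n+1} \equiv -\varpi h_{2n}/h_1 \pmod{\mfp^2}$. The only cosmetic difference is how that congruence is obtained: you solve the linear system and read it off the last row of $M_0^{-1}$ via Cramer's rule, while the paper reads it directly from row $2n$ of $(h - I_{2n+1})v = 0$, which is slightly more economical but logically identical.
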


\begin{proof}
Let 
\[
h := 
\begin{pmatrix}
 x_{1, 1}&\hdots&x_{1, 2n+1}\\
 \vdots&\ddots&\vdots\\
 x_{2n+1, 1}&\hdots&x_{2n+1, 2n+1}
\end{pmatrix} 
\in I_H^+
\]
be an affine generic element with its simple affine components $(h_1, \ldots, h_n, h_{2n})$, then we have
\begin{align*}
x_{1, 2} &\equiv x_{2n, 2n+1} \equiv h_1 \not\equiv 0 \pmod\mfp, \\
&\vdotswithin{=} \\
x_{n, n+1} &\equiv x_{n+1, n+2} \equiv h_n \not\equiv 0 \pmod\mfp \text{, and} \\
x_{2n, 1} \varpi^{-1} &\equiv x_{2n+1, 2} \varpi^{-1} \equiv h_{2n} \not\equiv 0 \pmod\mfp.
\end{align*}

Since $h$ is an element of $\SO_{2n+1}(F)$, $h$ has an eigenvector with eigenvalue $1$.
We take such an eigenvector satisfying 
\[
v=
\begin{pmatrix}
 v_1\\
 \vdots\\
 v_{2n+1}
\end{pmatrix} 
\in \mcO^{\oplus(2n+1)} \setminus \mfp^{\oplus(2n+1)}.
\]

Then we have 
\[
\begin{pmatrix}
 x_{1, 1}-1&\hdots&x_{1, 2n+1}\\
 \vdots&\ddots&\vdots\\
 x_{2n+1, 1}&\hdots&x_{2n+1, 2n+1}-1
\end{pmatrix}
\begin{pmatrix}
 v_1\\
 \vdots\\
 v_{2n+1}
\end{pmatrix} 
= 0 . \tag{$\ast$}
\]
In particular, this gives  
\[
\begin{pmatrix}
 0& x_{1, 2}&\hdots &x_{1, 2n+1}\\
 &\ddots&\ddots&\vdots\\
 &&\ddots&x_{2n, 2n+1}\\
 &\mbox{\Large 0}&&0
\end{pmatrix}
\begin{pmatrix}
v_1\\
\vdots\\
v_{2n+1}
\end{pmatrix} 
\equiv 0 \pmod \mfp.
\]
Hence we have 
\[
v_2 \equiv \cdots \equiv v_{2n+1} \equiv 0 \pmod \mfp.
\]
As $v \notin \mfp^{\oplus(2n+1)}$, we may assume that $v_1=1$.

Consider the following matrix: 
\[
\begin{pmatrix}
 1&&&\\
 v_2&\ddots&\mbox{\Large 0}&\\
 \vdots&&\ddots&\\
 v_{2n+1}&\mbox{\Large 0}&&1
\end{pmatrix}^{-1}
h
\begin{pmatrix}
 1&&&\\
 v_2&\ddots&\mbox{\Large 0}&\\
 \vdots&&\ddots&\\
 v_{2n+1}&\mbox{\Large 0}&&1
\end{pmatrix}
\]
\[
= 
\begin{pmatrix}
 1&x_{1, 2}&\hdots&x_{1, 2n+1}\\
 0&x_{2, 2}-v_2x_{1, 2}&\hdots&x_{2, 2n+1}-v_2x_{1, 2n+1}\\
 \vdots&\vdots&\ddots&\vdots\\
 0&x_{2n+1, 2}-v_{2n+1}x_{1, 2}&\hdots&x_{2n+1, 2n+1}-v_{2n+1}x_{1, 2n+1}\\
\end{pmatrix}.
\]
Then it suffices to show that the matrix
\[
h':=
\begin{pmatrix}
 x_{2, 2}-v_2x_{1, 2}&\hdots&x_{2, 2n+1}-v_2x_{1, 2n+1}\\
 \vdots&\ddots&\vdots\\
 x_{2n+1, 2}-v_{2n+1}x_{1, 2}&\hdots&x_{2n+1, 2n+1}-v_{2n+1}x_{1, 2n+1}\\
\end{pmatrix}
\]
is a desired affine generic element in $G$.
As $h'$ is an element of $I^+$, our task is to compute the simple affine components of $h'$. 

First, as 
\[
v_2 \equiv \cdots \equiv v_{2n+1} \equiv 0 \pmod \mfp,
\]
the first $(2n-1)$-simple affine components of $h'$ are $(h_2, \ldots, h_n, h_n, \ldots, h_1)$.

Second, by the $(2n)$-th row of the equation $(\ast)$, we have 
\[
x_{2n, 1} + x_{2n, 2}v_2 + \cdots + (x_{2n, 2n}-1)v_{2n} + x_{2n, 2n+1}v_{2n+1} = 0.
\]
Thus we have 
\[
x_{2n, 1} + x_{2n, 2n+1}v_{2n+1} \equiv 0 \pmod {\mfp^2},
\]
so the last simple affine component of $h'$ is 
\begin{align*}
(x_{2n+1, 2}-v_{2n+1}x_{1, 2})\varpi^{-1} &\equiv h_{2n}-v_{2n+1}\varpi^{-1}h_1\\
&\equiv 2h_{2n} \pmod {\mfp}.
\end{align*}
\end{proof}

\begin{prop}\label{prop:affgennorm}
Let $h \in I_H^+ \subseteq H$ be an affine generic element with its simple affine components $(h_1, \ldots, h_n, h_{2n})$.
Then $h$ is strongly regular semisimple elliptic, and there exists $g \in G$ satisfying the following conditions:
\begin{itemize}
 \item $g$ is strongly $\theta$-regular $\theta$-semisimple $\theta$-elliptic, 
 \item $h$ is a norm of $g$, and 
 \item $\mathcal{N}(g)$ is an affine generic element of $G$ with simple affine components $(h_2, \ldots, h_n, h_n, \ldots, h_1, 2h_{2n})$.  
\end{itemize}
\end{prop}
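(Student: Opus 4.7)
The plan is to pass through Lemma~\ref{lem:partialdiag}, which already packages the prescribed affine simple components into a matrix $h' \in \GL_{2n}(F)$, and then to upgrade the abstract existence Lemma~\ref{lem:norm2} by a $\theta$-conjugation that moves $N(g_0)$ exactly onto $h'$. First I would establish the structural properties of $h$ itself. By Lemma~\ref{lem:partialdiag} there exists an affine generic $h' \in I^+ \subset \GL_{2n}(F)$, with affine simple components $(h_2, \ldots, h_n, h_n, \ldots, h_1, 2h_{2n})$, such that $h$ is $\GL_{2n+1}(F)$-conjugate to $\begin{pmatrix} 1 & \ast \\ 0 & h' \end{pmatrix}$. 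Applying Lemma~\ref{lem:Eisen} to $h'$, whose shift $h' - I_{2n}$ has Eisenstein characteristic polynomial, shows that the characteristic polynomial $p_{h'}(X)$ of $h'$ is irreducible over $F$ with $p_{h'}(1) \ne 0$ (the constant term of an Eisenstein polynomial is nonzero). Hence the characteristic polynomial of $h$ factors as $(X-1)p_{h'}(X)$ with $2n+1$ pairwise distinct roots in $\ol{F}$, namely $1$ together with a single $\Gamma$-orbit $\{\mu_1^{\pm 1}, \ldots, \mu_n^{\pm 1}\}$ of size $2n$.

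In particular $T_h := \Cent_H(h)$ is a maximal torus of $H$, so $h$ is strongly regular semisimple. For ellipticity I write $X^{\ast}(T_h) = \bigoplus_{i=1}^{n} \Z e_i$ with $e_i(h) = \mu_i$; then $\Gamma$ acts on the set $\{\pm e_1, \ldots, \pm e_n\}$ by signed permutations mirroring its transitive action on $\{\mu_i^{\pm 1}\}$. A $\Gamma$-invariant character $\sum c_i e_i$ yields a $\Gamma$-equivariant function $c \colon \{\pm e_i\} \to \Z$ with $c(-e) = -c(e)$, but transitivity forces $c$ constant on this single orbit, so $c \equiv 0$. Therefore $T_h$ is anisotropic and $h$ is elliptic.

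For the construction of $g$, Lemma~\ref{lem:norm2} furnishes a strongly $\theta$-regular $\theta$-semisimple $\theta$-elliptic $g_0 \in G(F)$ of which $h$ is a norm. Combining Lemmas~\ref{lem:partialdiag} and~\ref{lem:norm1} forces $N(g_0)$ to share the characteristic polynomial $p_{h'}$ with $h'$; both are regular semisimple elements of $\GL_{2n}(F)$, so by the rational canonical form there is $x \in G(F)$ with $x N(g_0) x^{-1} = h'$. A direct check using $\theta^2 = \mathrm{id}$ on $G$ (which follows from ${}^{t}\!J = -J$) gives $N\bigl(x g_0 \theta(x)^{-1}\bigr) = x N(g_0) x^{-1} = h'$, so $g := x g_0 \theta(x)^{-1}$ satisfies $N(g) = h'$. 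Since $g$ is $\theta$-conjugate to $g_0$, it inherits strong $\theta$-regularity, $\theta$-semisimplicity, and $\theta$-ellipticity; Lemma~\ref{lem:norm1} applied to the block form of Lemma~\ref{lem:partialdiag} with this $g$ certifies that $h$ is a norm of $g$; and $N(g) = h'$ has the required affine simple components by construction. The main obstacle I expect is the anisotropy argument for $T_h$, which crucially uses that the single $\Gamma$-orbit of eigenvalues is closed under inversion; the rest is bookkeeping around the involution $\theta^2 = \mathrm{id}$, which is precisely what lets $\theta$-conjugation of $g_0$ translate into ordinary conjugation of $N(g_0)$.
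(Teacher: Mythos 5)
Your proof is correct, and the final construction of $g$ — invoking Lemma~\ref{lem:norm2}, matching $N(g_0)$ to $h'$ by conjugation in $G(F)$, and then $\theta$-conjugating to force $N(g)=h'$ — is essentially the paper's argument. Where you diverge is in establishing that $h$ is strongly regular semisimple elliptic. The paper gets strong regularity indirectly: it produces $x'\in G(\ol F)$ with $N(x')=h'$, notes $h'$ is regular by Lemma~\ref{lem:Eisen}, and appeals to Lemma~\ref{lem:norm0} to conclude that $h$ is strongly regular; ellipticity is then immediate because Lemma~\ref{lem:sumSO} shows $\Cent_{H(F)}(h)\subset I_H\langle\varphi'\rangle$ is compact. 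You instead argue directly on the $H$-side: distinct eigenvalues give that $\Cent_H(h)=T_h$ is a maximal torus, and a $\Gamma$-action computation on $X^\ast(T_h)$ — using that the $2n$ nontrivial eigenvalues form a single $\Gamma$-orbit stable under inversion — shows $X^\ast(T_h)^\Gamma=0$, hence $T_h$ is anisotropic. This is a valid alternative; it costs you more bookkeeping but buys a more explicit description of the torus $T_h$, whereas the paper's compactness argument is shorter and reuses a lemma already proved for character computations. Two small points worth tightening: (i) the step from "distinct eigenvalues" to "$\Cent_H(h)$ is a maximal torus" implicitly uses Steinberg's connectedness of centralizers in the adjoint group $\SO_{2n+1}$ (otherwise you only get that $\Cent_H(h)^0$ is a torus), and you should say so; (ii) your citation of Lemma~\ref{lem:norm1} to deduce that $N(g_0)$ has characteristic polynomial $p_{h'}$ runs the lemma backwards — that lemma is a sufficient criterion for being a norm, not a consequence of it; the correct justification is the explicit $T_0$-coordinate description of $\mathcal{A}_{H/G}$ combined with Lemma~\ref{lem:partialdiag}. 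Neither issue is a genuine gap, as the needed facts are standard and the conclusions are right.
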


\begin{proof}
From Lemma \ref{lem:partialdiag} and the proof of Lemma \ref{lem:Eisen}, it follows that $h$ has $(2n+1)$-distinct eigenvalues.
Hence $h$ is semisimple.
Since the centralizer of $h$ in $H$ is compact by Lemma \ref{lem:sumSO}, $h$ is elliptic.

We next show that $h$ is strongly regular.
Let $h'$ be the affine generic element of $G$ defined in Lemma \ref{lem:partialdiag}.
We take a $\theta$-semisimple element $g' \in \mathbf{G}(\ol{F})$ corresponding to $h$ via $\mathcal{A}_{\mathbf{H}/\mathbf{G}}$.
Then, by Lemma \ref{lem:eig}, we have
\[
\mathrm{Eig}\bigl(\mathcal{N}(g')\bigr)=\mathrm{Eig}(h').
\]
In particular, $\mathcal{N}(g')$ and $h'$ are conjugate in $\mathbf{G}(\ol{F})$.
Since $h'$ is an affine generic element of $G$, $h'$ is a strongly regular semisimple element of $\mathbf{G}(\ol{F})$ by Lemma \ref{lem:Eisen}.
Thus so is $\mathcal{N}(g')$.
By applying Lemma \ref{lem:norm0} to $g'\in \mathbf{G}(\ol{F})$ and $h \in H$, we can conclude that $h$ is a strongly regular element of $H$.


Finally, we take a strongly $\theta$-regular $\theta$-semisimple $\theta$-elliptic element $g'' \in G$ such that $h$ is a norm of $g''$.
Such an element exists by Lemma \ref{lem:norm2}.
Then, by the same argument as above, $\mathcal{N}(g'')$ and $h'$ are conjugate in $\mathbf{G}(\ol{F})$. 
However, since $\mathcal{N}(g'')$ and $h'$ belong to $G=\mathbf{G}(F)$, they are conjugate in $G$.
Let $x'$ be an element of $G$ satisfying $x'\mathcal{N}(g'')x'^{-1}=h'$.
Then $x'g''\theta(x')^{-1}$ belongs to $G$ and is strongly $\theta$-regular $\theta$-semisimple $\theta$-elliptic.
Moreover $h$ is a norm of $x'g''\theta(x')^{-1}$.
Finally, since we have 
\[
\mathcal{N}(x'g''\theta(x')^{-1})=x'\mathcal{N}(g''){x'}^{-1}=h',
\]
$x'g''\theta(x')^{-1}$ satisfies the third condition in the assertion.
This element is as desired.
\end{proof}

\subsection{Transfer factors}
As in the previous subsection, we put $\mathbf{G}=\GL_{2n}$ and $\mathbf{H}=\SO_{2n+1}$.
We fix the following $\theta$-stable Whittaker datum $(\mathbf{B}, \lambda)$ of $\mathbf{G}$: 
\begin{itemize}
 \item $\mathbf{B}$ is the subgroup of upper triangular matrices in $\mathbf{G}$, and 
 \item $\lambda$ is the character of the unipotent radical $U=\mathbf{U}(F)$ of $\mathbf{B}(F)$ defined by 
 \[
 \lambda(x)=\psi(x_{12}+\cdots+x_{2n-1, 2n}) \text{ for } x=(x_{ij}) \in U,
 \]
 where $\psi$ is the fixed nontrivial additive character of $F$.
\end{itemize}
Then we have the normalized absolute transfer factor $\Delta_{\mathbf{H},\mathbf{G}}$ for $\mathbf{G}$ and $\mathbf{H}$ with respect to $(\mathbf{B}, \lambda)$.
This is a function
\[
\Delta_{\mathbf{H},\mathbf{G}} \colon H^{\srs} \times G^{\strs} \ra \C,
\]
which has the following properties.
\begin{itemize}
 \item The value $\Delta_{\mathbf{H},\mathbf{G}}(h, g)$ is nonzero only if $h$ is a norm of $g$.
 \item If $h_1, h_2 \in H^{\srs}$ are stably conjugate, then $\Delta_{\mathbf{H},\mathbf{G}}(h_1, g)=\Delta_{\mathbf{H},\mathbf{G}}(h_2, g)$.
 \item If $g_1, g_2 \in G^{\strs}$ are $\theta$-conjugate, then $\Delta_{\mathbf{H},\mathbf{G}}(h, g_1)=\Delta_{\mathbf{H},\mathbf{G}}(h, g_2)$.
\end{itemize}

Our purpose in this subsection is to show the following proposition (the triviality of $\Delta_{\mathbf{H},\mathbf{G}}$):

\begin{prop}\label{prop:Delta}
Let $h \in H^{\srs}$ and $g \in G^{\strs}$.
If $h$ is a norm of $g$, then the transfer factor $\Delta_{\mathbf{H},\mathbf{G}}(h, g)$ is equal to $1$.
\end{prop}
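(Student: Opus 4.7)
The proposition is essentially an immediate corollary of the preceding four lemmas, so my plan is to simply combine them. Recall that in the split case the transfer factor admits the factorization
\[
\Delta_{H, G}(\gamma, \delta) = \Delta_{\mathrm{I}}(\gamma, \delta) \cdot \Delta_{\mathrm{II}}(\gamma, \delta) \cdot \Delta_{\mathrm{III}}(\gamma, \delta) \cdot \Delta_{\mathrm{IV}}(\gamma, \delta),
\]
with no local $\varepsilon$-factor contribution (as noted just before the definitions of the individual factors). The plan is to invoke Lemmas \ref{lem:Delta1}, \ref{lem:Delta2}, \ref{lem:Delta3}, and \ref{lem:Delta4} in turn to conclude that each of the four factors equals $1$, and multiply.

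There is no genuine obstacle here: all the real work has been done in the four preceding lemmas, where the specific features of our endoscopic datum $(H, {}^L\!H, s, \xi) = (\SO_{2n+1}, {}^L\!\SO_{2n+1}, 1, \iota)$ are exploited, namely that $s = 1$ trivializes $\Delta_{\mathrm{I}}$ and $\Delta_{\mathrm{III}}$, while the identification $R_{\mathrm{res}}^{\vee} = R^{\vee}(H, T_H)$ trivializes $\Delta_{\mathrm{II}}$ and $\Delta_{\mathrm{IV}}$. The only thing to check is that the product decomposition above is the one actually used to define $\Delta_{H, G}$, which is standard and already referenced in the excerpt. Hence the proof consists of a single display combining the four lemmas.
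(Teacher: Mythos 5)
Your proposal matches the paper's proof exactly: the paper also writes $\Delta_{H,G}$ as the product $\Delta_{\mathrm{I}}\Delta_{\mathrm{II}}\Delta_{\mathrm{III}}\Delta_{\mathrm{IV}}$ and cites Lemmas \ref{lem:Delta1}--\ref{lem:Delta4} to conclude each factor is $1$. Nothing is missing.
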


The transfer factor $\Delta_{\mathbf{H},\mathbf{G}}$ is defined as the product of $\Delta_\mathrm{I}$, $\Delta_\mathrm{II}$, $\Delta_\mathrm{III}$ and $\Delta_\mathrm{IV}$ (since $\mathbf{G}$ and $\mathbf{H}$ are split, the contribution of the local $\varepsilon$-factor is trivial).
However, by using Waldspurger's formula for the transfer factors for classical groups (\cite[1.10 Proposition]{MR2672539}), 
we know that the product of the three factors $\Delta_{\mathrm{I}}$, $\Delta_{\mathrm{II}}$, and $\Delta_{\mathrm{III}}$ is trivial.
Indeed, an elliptic endoscopic group of the twisted $\GL_{2n}$ is in the form of
\[
\SO_{d^{+}}\times\SO_{d^{-}}, 
\]
where 
\begin{itemize}
 \item  $d^{+}\in\Z$ is a non-negative odd number and $d^{-}\in\Z$ is a non-negative even number such that $d^{+}+d^{-}=2n+1$, 
 \item $\SO_{d^{+}}$ is the split odd special orthogonal group, and
 \item $\SO_{d^{-}}$ is a quasi-split even special orthogonal group
\end{itemize}
and only $\SO_{d^{-}}$ contributes to the product $\Delta_{\mathrm{I}}\cdot\Delta_{\mathrm{II}}\cdot\Delta_{\mathrm{III}}$ nontrivially (see \cite[1.8 and 1.10]{MR2672539} for details).
In our setting, $d^{+}=2n+1$ and $d^{-}=0$.
Hence the product $\Delta_{\mathrm{I}}\cdot\Delta_{\mathrm{II}}\cdot\Delta_{\mathrm{III}}$ is trivial, and Proposition \ref{prop:Delta} is reduced to showing the triviality of $\Delta_{\mathrm{IV}}$.

We recall the definition of $\Delta_{\mathrm{IV}}$.
Let $D_{\mathbf{H}}$ be the Weyl discriminant: 
\[
D_{\mathbf{H}}(h) := \big|\det ( \Ad(h)-1 \mid \mathfrak{h}/{\mathfrak{t}_{\mathbf{H},h}})\big|^\frac{1}{2} \text{ for } h \in H^{\srs},
\]
where $\mathfrak{h}$ and $\mathfrak{t}_{\mathbf{H},h}$ are the Lie algebras of $\mathbf{H}$ and $\mathbf{T}_{\mathbf{H},h}:=\Cent_{\mathbf{H}}(h)$, respectively.
Let $D_{\mathbf{G},\theta}$ be the $\theta$-twisted Weyl discriminant: 
\[
D_{\mathbf{G},\theta}(g) := \big|\det \left( \Ad(g)\circ \theta-1 \mid \mathfrak{g}/{\mathfrak{t}_{g}}\right)\big|^\frac{1}{2} \text{ for } g \in G^{\strs},
\]
where $\mathfrak{g}$ and $\mathfrak{t}_{g}$ are the Lie algebras of $\mathbf{G}$ and $\mathbf{T}_{g}:=\Cent_{\mathbf{G}}(\Cent_{\mathbf{G}}(g, \theta)^0)$, respectively.
Here $\mathbf{T}_{g}$ is a maximal torus of $\mathbf{G}$.
Indeed, since $g$ is $\theta$-semisimple, we can find $x\in\mathbf{G}(\ol{F})$ such that $xg\theta(x)^{-1}\in\mathbf{T}(\ol{F})$.
As we have
\begin{align*}
\mathbf{T}_{xg\theta(x)^{-1}}
&=\Cent_{\mathbf{G}}\bigl(\Cent_{\mathbf{G}}(xg\theta(x)^{-1}, \theta)^0\bigr)\\
&=\Cent_{\mathbf{G}}\bigr(x\Cent_{\mathbf{G}}(g, \theta)^0x^{-1}\bigl)\\
&=x\Cent_{\mathbf{G}}\bigl(\Cent_{\mathbf{G}}(g, \theta)^0\bigr)x^{-1}=x\mathbf{T}_{g}x^{-1},
\end{align*}
it suffices to show that $\mathbf{T}_{xg\theta(x)^{-1}}$ is a maximal torus.
This follows from the following lemma.

\begin{lem}\label{lem:centcent}
If a $\theta$-regular $\theta$-semisimple element $g$ of $\mathbf{G}(\ol{F})$ belongs to the diagonal maximal torus $\mathbf{T}$ of $\mathbf{G}$, then we have $\mathbf{T}_{g}=\mathbf{T}$.
\end{lem}

\begin{proof}
As $g$ belongs to $\mathbf{T}$, every element $x$ of the $\theta$-fixed part $\mathbf{T}^{\theta}$ of $\mathbf{T}$ satisfies
\[
xg\theta(x)^{-1}=xgx^{-1}=g.
\]
Thus the $\theta$-twisted centralizer $\Cent_{\mathbf{G}}(g,\theta)$ of $g$ contains $\mathbf{T}^{\theta}$.
Since the action of $\theta$ on $\mathbf{T}$ is given by
\[
\diag(t_{1},\ldots, t_{2n}) \mapsto \diag\bigl(t_{2n}^{-1},\ldots, t_{1}^{-1}\bigr),
\]
$\mathbf{T}^{\theta}$ is a torus of rank $n$ given by
\[
\{\diag(t_{1},\ldots,t_{2n})\in\mathbf{T}\mid t_{1}=t_{2n}^{-1},\ldots,t_{n}=t_{n+1}^{-1}\}.
\]
In particular $\mathbf{T}^{\theta}$ is connected, hence $\Cent_{\mathbf{G}}(g,\theta)^{0}$ contains $\mathbf{T}^{\theta}$.
Therefore we have 
\[
\Cent_{\mathbf{G}}\bigl(\Cent_{\mathbf{G}}(g,\theta)^{0}\bigr)\subseteq\Cent_{\mathbf{G}}\bigl(\mathbf{T}^{\theta}\bigr).
\]

Since $\mathbf{T}^{\theta}$ contains a regular semisimple element of $\mathbf{G}$, $\Cent_{\mathbf{G}}(\mathbf{T}^{\theta})$ is equal to $\mathbf{T}$.
On the other hand, as $g$ is $\theta$-regular semisimple, $\Cent_{\mathbf{G}}(g,\theta)^{0}$ is a torus of $\mathbf{G}$.
In particular, $\Cent_{\mathbf{G}}(\Cent_{\mathbf{G}}(g,\theta)^{0})$ contains a maximal torus of $\mathbf{G}$.
Therefore we get
\[
\Cent_{\mathbf{G}}\bigl(\Cent_{\mathbf{G}}(g,\theta)^{0}\bigr)=\Cent_{\mathbf{G}}\bigl(\mathbf{T}^{\theta}\bigr)=\mathbf{T}.
\]
\end{proof}

We take $g\in G^{\strs}$, and let $h\in H^{\srs}$ be a norm of $g$.
Then the fourth factor is defined by 
\[
\Delta_\mathrm{IV}(h, g) = \frac{D_{\mathbf{G},\theta}(g)}{D_{\mathbf{H}}(h)}.
\]

\begin{lem}\label{lem:Delta4}
Let $h \in H^{\srs}$ and $g \in G^{\strs}$.
If $h$ is a norm of $g$, then we have
\[
D_{\mathbf{G},\theta}(g)=D_{\mathbf{H}}(h).
\]
In particular, we have $\Delta_{\mathrm{IV}}(h,g)=1$.
\end{lem}

\begin{proof}
First, we compute $D_{\mathbf{G},\theta}(g)$.
As $g$ is $\theta$-semisimple, we can find $x\in\mathbf{G}(\ol{F})$ such that $xg\theta(x)^{-1}\in\mathbf{T}(\ol{F})$.
Since the $\theta$-twisted Weyl discriminant is invariant under $\theta$-conjugation, it suffices to compute $D_{\mathbf{G},\theta}(xg\theta(x)^{-1})$.
We put 
\[
t:=xg\theta(x)^{-1}=\diag(t_{1},\ldots,t_{2n}).
\]
Then we have $\mathbf{T}_{t}=\mathbf{T}$ by Lemma \ref{lem:centcent}.

Now we compute $D_{\mathbf{G},\theta}(t) := |\det \left( \Ad(t)\circ \theta-1 \mid \mathfrak{g}/{\mathfrak{t}}\right)|^\frac{1}{2}$
, where $\mathfrak{t}$ is the Lie algebra of $\mathbf{T}$.
We write $X_{i,j}$ for the root vector of the root $e_{i}-e_{j}$ whose $(i,j)$-th entry is given by $1$ and all of the other entries are given by $0$.
Then we have
\begin{align*}
\theta(X_{i,j})
&=-J\,{}^{t}\!X_{i,j}J^{-1}\\
&=(-1)^{i+j+1}X_{2n+1-j,2n+1-i}.
\end{align*}
Thus $X_{i,j}$ is an eigenvector of $\theta$ if and only if $i+j=2n+1$.
In this case we have
\[
\bigl(\Ad(t)\circ \theta-1\bigr)(X_{i,2n+1-i})=(t_{i}/t_{2n+1-i}-1)X_{i,2n+1-i}.
\]
On the other hand, for $i+j\neq2n+1$, the $\ol{F}$-vector space spanned by $X_{i,j}$ and $X_{2n+1-j,2n+1-i}$ is stable under $\Ad(t)\circ \theta-1$, and the representation matrix of $\Ad(t)\circ \theta-1$ with respect to $X_{i,j}$ and $X_{2n+1-j,2n+1-i}$ is given by
\[
\begin{pmatrix}
-1&(-1)^{(2n+1-j)+(2n+1-i)+1}{t_{i}}/{t_{j}}\\
(-1)^{i+j+1}{t_{2n+1-j}}/{t_{2n+1-i}}&-1
\end{pmatrix}.
\]

Therefore if we set 
\[
\tilde{t}_{i}:=\frac{t_{i}}{t_{2n+1-i}},
\]
then $D_{\mathbf{G},\theta}(g)=|\det \left( \Ad(t)\circ \theta-1 \mid \mathfrak{g}/{\mathfrak{t}}\right)|^\frac{1}{2}$ is given by the product of
\[
D_{\mathbf{G},\theta}(g)_{1}:=\prod_{1\leq i \leq 2n} \big|\tilde{t}_{i}-1\big|^{\frac{1}{2}}
\]
and
\[
D_{\mathbf{G},\theta}(g)_{2}:=\prod_{\begin{subarray}{c} 1\leq i\neq j \leq 2n \\ i>2n+1-j\end{subarray}} \big|\tilde{t}_{i}\tilde{t}_{2n+1-j}-1\big|^{\frac{1}{2}}.
\]
Note that we have
\[
\bigl(D_{\mathbf{G},\theta}(g)_{2}\bigr)^{2}
=\prod_{\begin{subarray}{c} 1\leq i\neq j \leq 2n \\ i\neq2n+1-j\end{subarray}} \big|\tilde{t}_{i}\tilde{t}_{2n+1-j}-1\big|^{\frac{1}{2}}
=\prod_{\begin{subarray}{c} 1\leq i\neq j \leq 2n \\ i\neq2n+1-j\end{subarray}} \big|\tilde{t}_{i}\tilde{t}_{j}-1\big|^{\frac{1}{2}}.
\]


We next compute $D_{\mathbf{H}}(h)$.
Let $s =\diag(s_1, \ldots, s_n, 1, s_n^{-1}, \ldots, s_1^{-1}) \in \mathbf{T}_{\mathbf{H}}(\ol{F})$ be an element which is conjugate to $h$ in $\mathbf{H}(\ol{F})$.
By a similar, but simpler, argument to that above, $D_{\mathbf{H}}(h)=|\det\left(\Ad(s)-1\mid\mathfrak{h}/{\mathfrak{t}_\mathrm{H}}\right)|^{\frac{1}{2}}$ is given by the product of
\[
D_{\mathbf{H}}(h)_{1}:=\prod_{i=1}^{n} |s_{i}-1|^{\frac{1}{2}}\cdot\bigl|s_{i}^{-1}-1\bigr|^{\frac{1}{2}}
\]
and
\[
D_{\mathbf{H}}(h)_{2}:=\prod_{1\leq i< j \leq n} |s_{i}s_{j}-1|^{\frac{1}{2}} \cdot \biggl|\frac{s_{i}}{s_{j}}-1\biggr|^{\frac{1}{2}} \cdot \biggl|\frac{s_{j}}{s_{i}}-1\biggr|^{\frac{1}{2}} \cdot \biggl|\frac{1}{s_{i}s_{j}}-1\biggr|^{\frac{1}{2}}.
\]
Note that 
\[
\bigl(D_{\mathbf{H}}(h)_{2}\bigr)^{2}=\prod_{1\leq i\neq j \leq n} |s_{i}s_{j}-1|^{\frac{1}{2}} \cdot \biggl|\frac{s_{i}}{s_{j}}-1\biggr|^{\frac{1}{2}} \cdot \biggl|\frac{s_{j}}{s_{i}}-1\biggr|^{\frac{1}{2}} \cdot \biggl|\frac{1}{s_{i}s_{j}}-1\biggr|^{\frac{1}{2}}.
\]

On the other hand, by Lemma \ref{lem:eig}, the multi-set $\{s_{1}^{\pm1}, \ldots, s_{n}^{\pm1}\}$ is equal to the multi-set of eigenvalues $\mathrm{Eig}(\mathcal{N}(g))$ of $\mathcal{N}(g)$.
Since we have $\mathcal{N}(t)=x\mathcal{N}(g)x^{-1}$, $\mathrm{Eig}(\mathcal{N}(g))$ is equal to $\mathrm{Eig}(\mathcal{N}(t))=\{\tilde{t}_{1}, \ldots, \tilde{t}_{2n}\}$.
Hence we have
\[
\bigl\{s_{1}^{\pm1}, \ldots, s_{n}^{\pm1}\bigr\}=\bigl\{\tilde{t}_{1}, \ldots, \tilde{t}_{2n}\bigr\}.
\]
From this, we get $D_{\mathbf{G},\theta}(g)_{1}=D_{\mathbf{H}}(h)_{1}$.

We finally compare $D_{\mathbf{G},\theta}(g)_{2}$ with $D_{\mathbf{H}}(h)_{2}$.
By the assumption of the regularity of $h$, we have $s_{i}\neq s_{j}^{\pm1}$ for every $j\neq i$.
Moreover, by the definition of $\tilde{t}_{i}$, we have
\[
\tilde{t}_{i}^{-1}=\tilde{t}_{2n+1-i}.
\]
Thus, by permuting the indices of the $s_{i}$ appropriately, we may assume that 
\begin{align*}
s_{1}&= \tilde{t}_{1},\, s_{1}^{-1}=\tilde{t}_{2n}, \\
&\vdotswithin{=} \\
s_{n}&= \tilde{t}_{n},\, s_{1}^{-1}=\tilde{t}_{n+1}. 
\end{align*}
Then we have
\begin{align*}
&\prod_{1\leq i\neq j \leq n} |s_{i}s_{j}-1|^{\frac{1}{2}} \cdot \biggl|\frac{s_{i}}{s_{j}}-1\biggr|^{\frac{1}{2}} \cdot \biggl|\frac{s_{j}}{s_{i}}-1\biggr|^{\frac{1}{2}} \cdot \biggl|\frac{1}{s_{i}s_{j}}-1\biggr|^{\frac{1}{2}}\\
&=\prod_{1\leq i\neq j \leq n} \big|\tilde{t}_{i}\tilde{t}_{j}-1\big|^{\frac{1}{2}} \cdot \big|\tilde{t}_{i}\tilde{t}_{2n+1-j}-1\big|^{\frac{1}{2}} \cdot \big|\tilde{t}_{j}{\tilde{t}_{2n+1-i}}-1\big|^{\frac{1}{2}} \cdot \big|\tilde{t}_{2n+1-i}\tilde{t}_{2n+1-j}-1\big|^{\frac{1}{2}}\\
&=\prod_{\begin{subarray}{c} 1\leq i\neq j \leq 2n \\ j\neq2n+1-i\end{subarray}} \big|\tilde{t}_{i}\tilde{t}_{j}-1\big|^{\frac{1}{2}}.
\end{align*}
Thus we get $(D_{\mathbf{G},\theta}(g)_{2})^{2}=(D_{\mathbf{H}}(h)_{2})^{2}$, hence $D_{\mathbf{G},\theta}(g)_{2}=D_{\mathbf{H}}(h)_{2}$.
\end{proof}

\subsection{Norms of $1+\varphi_u$ and $\varphi_u(1+\varphi_u)$}
We first prove a lemma about $\theta$-semisimplicity of semisimple elements in $\mathbf{G}$.

\begin{lem}\label{lem:tss}
Let $g \in \mathbf{G}(\ol{F})$ be a semisimple element such that $\mathcal{N}(g)=g\theta(g) \in \mathbf{G}(\ol{F})$ is strongly regular semisimple.
If $g\theta(g)=\theta(g)g$, then $g$ is $\theta$-semisimple.
\end{lem}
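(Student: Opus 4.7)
My plan is to exhibit a $\theta$-stable maximal torus of $G$ containing $g$. This will give $\theta$-semisimplicity because $\theta$ itself is an involutive (hence semisimple) automorphism of $G$, so every element of $T\cdot\theta\subset G\rtimes\langle\theta\rangle$ is semisimple, in particular $g\theta$.

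The first step is to observe that $\theta^{2}=\mathrm{id}$ for our specific $\theta(h) = J\,{}^{t}h^{-1}J^{-1}$, via a short computation using ${}^{t}J = -J$. Combined with the commutation hypothesis $g\theta(g)=\theta(g)g$, this gives
\[
\theta(N(g)) = \theta(g)\,\theta^{2}(g) = \theta(g)\,g = g\,\theta(g) = N(g),
\]
so $N(g)$ is $\theta$-fixed, and moreover $g$ commutes with $N(g)$.

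Since $N(g)$ is regular semisimple, its centralizer $T := \Cent_{G}(N(g))$ is the unique maximal torus containing $N(g)$; the $\theta$-fixedness of $N(g)$ forces $\theta(T)=T$, while the commutation of $g$ with $N(g)$ puts $g$ in $T$. This is the $\theta$-stable maximal torus we want. The main obstacle is really just invoking the identification of $\theta$-semisimplicity with containment in a $\theta$-stable maximal torus; a cleaner alternative that avoids this identification is to apply Jordan decomposition directly in the disconnected algebraic group $G\rtimes\langle\theta\rangle$: since $\theta^{2}=1$, we have $(g\theta)^{2}=N(g)$, which is semisimple by hypothesis, and as $F$ has characteristic zero, the unipotent part of $g\theta$ must square to $1$ and hence be trivial, so $g\theta$ is semisimple.
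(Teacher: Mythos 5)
Your proof is correct, and it takes a genuinely different route from the paper's. The paper directly exhibits a Borel pair preserved by $\mathrm{Int}(g)\circ\theta$: it simultaneously diagonalizes the commuting semisimple elements $g$ and $\theta(g)$ (so $xgx^{-1}, x\theta(g)x^{-1}\in T_0$), then uses the $\theta$-invariance of $N(g)$ to arrange $xN(g)x^{-1}\in T_0^{\theta}$ by a Weyl element, and finally uses the \emph{regularity} of $N(g)$ to deduce $x\theta(x)^{-1}\in T_0$, giving $xg\theta(x)^{-1}\in T_0$. You instead pass to the disconnected group $G\rtimes\langle\theta\rangle$ and apply Jordan decomposition to the element $g\theta$: since $\theta^2=1$ one has $(g\theta)^2=N(g)$, and in characteristic zero a square root of a semisimple element is again semisimple. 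This is cleaner and in fact strictly more economical — it never uses the commutation hypothesis $g\theta(g)=\theta(g)g$, nor the semisimplicity of $g$, nor the regularity (as opposed to mere semisimplicity) of $N(g)$, so it establishes a slightly stronger statement. What it buys in simplicity it pays for in the appeal to the equivalence of ``$\theta$-semisimple'' with ``$g\theta$ is semisimple in $G\rtimes\langle\theta\rangle$'' (equivalently, with the automorphism $\mathrm{Int}(g)\circ\theta$ being quasi-semisimple, i.e.\ preserving a Borel pair) — the very equivalence the paper verifies by hand. One small imprecision worth flagging: you phrase the needed input as ``the identification of $\theta$-semisimplicity with containment in a $\theta$-stable maximal torus,'' but containment of $g$ in a $\theta$-stable maximal torus is a sufficient condition you arrange, not a characterization of $\theta$-semisimplicity; the actual characterization (from \cite[\S 1.1]{MR1687096}) is preservation of a Borel pair, or equivalently semisimplicity of $g\theta$ in the disconnected group, which both of your two arguments deliver. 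Also, your first argument (via $T:=\Cent_G(N(g))$) does use the commutation hypothesis twice — to show $\theta(N(g))=N(g)$ and to place $g$ in $T$ — whereas your Jordan-decomposition alternative uses it nowhere; so the ``cleaner alternative'' is not merely cosmetically cleaner.
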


\begin{proof}
Since $F$ has characteristic zero, $g$ is $\theta$-semisimple if and only if $\mathrm{Int}(g) \circ \theta$ preserves a pair $(\mathbf{B}', \mathbf{T}')$ in $\mathbf{G}$ (see Section 1.1 in \cite{MR1687096}).
If there exists $x \in \mathbf{G}(\ol{F})$ such that $xg\theta(x)^{-1}$ belongs to the diagonal torus $\mathbf{T}(\ol{F})$, then the pair $(x^{-1}\mathbf{B}x, x^{-1}\mathbf{T}x)$ is $\mathrm{Int}(g) \circ \theta$-invariant.
Therefore it suffices to prove the existence of such $x \in \mathbf{G}(\ol{F})$.

As $g$ is semisimple, $\theta(g)$ is also semisimple.
Since $g\theta(g)=\theta(g)g$, we can take $x \in \mathbf{G}(\ol{F})$ such that $xgx^{-1} \in \mathbf{T}(\ol{F})$ and $x\theta(g)x^{-1} \in \mathbf{T}(\ol{F})$.
Note that this element also satisfies $x\mathcal{N}(g)x^{-1} \in \mathbf{T}(\ol{F})$.
On the other hand, since the element $\mathcal{N}(g)$ is $\theta$-invariant, $J\,{}^{t}\!\mathcal{N}(g)^{-1}J^{-1}$ equals $\mathcal{N}(g)$.
In particular, if we let $\{t_{1}, \ldots, t_{2n}\}$ be the multi-set $\mathrm{Eig}(\mathcal{N}(g))$ of eigenvalues of $\mathcal{N}(g)$, then we have
\[
\{t_{1}, \ldots, t_{2n}\}=\{t_{1}^{-1}, \ldots, t_{2n}^{-1}\}.
\]
By combining with the assumption that $\mathcal{N}(g)$ is strongly regular semisimple (i.e., $\mathrm{Eig}(\mathcal{N}(g))$ is multiplicity-free), we have
\[
\mathrm{Eig}\bigl(\mathcal{N}(g)\bigr)=\{t_{1}, \ldots, t_{n}, t_{n}^{-1}, \ldots, t_{1}^{-1}\}.
\]
Therefore, by replacing $x$ with $wx$ for an appropriate element $w$ of the Weyl group $\Omega_{\mathbf{T}}$, we may assume that 
\[
x\mathcal{N}(g)x^{-1} = \diag\bigl(t_{1}, \ldots, t_{n}, t_{n}^{-1}, \ldots, t_{1}^{-1}\bigr) \in \mathbf{T}^\theta(\ol{F}) \subseteq \mathbf{T}(\ol{F}).
\]

Then we have 
\[
x\mathcal{N}(g)x^{-1}=\theta\bigl(x\mathcal{N}(g)x^{-1}\bigr)=\theta(x)\mathcal{N}(g)\theta(x)^{-1},
\]
hence $x^{-1}\theta(x)$ belongs to $\Cent_{\mathbf{G}}(\mathcal{N}(g))$.
Since $\mathcal{N}(g)$ belongs to $x^{-1}\mathbf{T}(\ol{F})x$ and $\mathcal{N}(g)$ is strongly regular semisimple, we have $\Cent_{\mathbf{G}}(\mathcal{N}(g))=x^{-1}\mathbf{T}x$.
Therefore $x\theta(x)^{-1} \in \mathbf{T}(\ol{F})$, and 
$xg\theta(x)^{-1}=xgx^{-1}\cdot x\theta(x)^{-1} \in \mathbf{T}(\ol{F})$.
This element $x$ is as desired.
\end{proof}

For $u \in k^{\times}$, we consider the following element:
\[
\mathfrak{N}(1+\varphi_u) := 
(1-\varpi u)^{-1} 
\begin{pmatrix}
 1&&&&\\
 \varpi u&1+\varpi u&&\mbox{\Large 2}&\\
 \vdots&&\ddots&&\\
 \varpi u&\mbox{\Large $2\varpi u$}&&1+\varpi u&\\
 (\varpi u)^2/2&\varpi u&\hdots&\varpi u&1
\end{pmatrix}\\
\in I_H^+ \subseteq H.
\]
Here, the big $2$ means that all upper triangular entries are given by 2, and the big $2\varpi u$ means that all lower triangular entries which lie in neither the first column nor the last row are given by $2\varpi u$.

\begin{prop}\label{prop:N}
\begin{enumerate}
\item
The element $1+\varphi_u \in G$ is strongly $\theta$-regular $\theta$-semisimple and $\mathfrak{N}(1+\varphi_u) \in H$ is strongly regular semisimple.
Moreover $\mathfrak{N}(1+\varphi_u)$ is a norm of $1+\varphi_u$.
\item
The element $\varphi_u(1+\varphi_u) \in G$ is strongly $\theta$-regular $\theta$-semisimple and $\varphi'_{u/2}\mathfrak{N}(1+\varphi_u) \in H$ is strongly regular semisimple.
Moreover $\varphi'_{u/2}\mathfrak{N}(1+\varphi_u)$ is a norm of $\varphi_u(1+\varphi_u)$.
\end{enumerate}
\end{prop}

\begin{proof}
Let us prove (1).
We first show that $1+\varphi_u$ is $\theta$-semisimple.
By Lemma \ref{lem:Eisen}, $1+\varphi_u$ is semisimple.
Since 
\begin{align*}
\theta(1+\varphi_u)
&=J(1+{}^t\varphi_u)^{-1}J^{-1}\\
&=J(1-{}^t\varphi_u+{}^t\varphi_u^2-\cdots)J^{-1}\\
&=1+\varphi_u+\varphi_u^2+\cdots,
\end{align*}
$1+\varphi_u$ commutes with $\theta(1+\varphi_u)$.
Moreover we can check that  
\[
\mathcal{N}(1+\varphi_u)= 
(1+\varphi_u)\theta(1+\varphi_u)= 
(1-\varpi u)^{-1}
\begin{pmatrix}
 1+\varpi u&&\mbox{\Large 2}\\
 &\ddots&\\
 \mbox{\Large $2\varpi u$}&&1+\varpi u
\end{pmatrix}.
\]
Since the characteristic of $k$ is not equal to $2$, this is an affine generic element of $G$.
Hence this is strongly regular semisimple by Lemma \ref{lem:Eisen}.
Therefore $1+\varphi_u$ is $\theta$-semisimple by Lemma \ref{lem:tss}.

We next show $\mathfrak{N}(1+\varphi_u)$ is semisimple and corresponds to $1+\varphi_u$ via $\mathcal{A}_{\mathbf{H}/\mathbf{G}}$.
Let $X$ be the matrix
\[
\begin{pmatrix}
 1&&\\
 &\ddots&\\
 -\varpi u/2&&1
\end{pmatrix} \in \GL_{2n+1}(F),
\] then we have 
\[
X^{-1}\mathfrak{N}(1+\varphi_u)X =
(1-\varpi u)^{-1}
\begin{pmatrix}
 1-\varpi u&&&\\
 0&1+\varpi u&&\mbox{\Large 2}\\
 \vdots&&\ddots&\\
 0&\mbox{\Large $2\varpi u$}&&1+\varpi u
\end{pmatrix}.
\]
By the affine genericity of $\mathcal{N}(1+\varphi_u)$, this element is semisimple.
Moreover $\mathfrak{N}(1+\varphi_u)$ corresponds to $1+\varphi_u$ via $\mathcal{A}_{\mathbf{H}/\mathbf{G}}$ by Lemma \ref{lem:norm1}.

Finally, by Lemma \ref{lem:norm0}, $1+\varphi_u$ is strongly $\theta$-regular and $\mathfrak{N}(1+\varphi_u)$ is strongly regular.

For (2), by using the following equalities, we can apply the same argument as in (1):
\begin{align*}
\theta\bigl(\varphi_u(1+\varphi_u)\bigr)
&=\theta(\varphi_u)\theta(1+\varphi_u)\\
&=-\varphi_u^{-1}(1+\varphi_u+\varphi_u^2+\cdots),
\end{align*}
\[
\mathcal{N}\bigl(\varphi_u(1+\varphi_u)\bigr)
= \varphi_u(1+\varphi_u)\theta\bigl(\varphi_u(1+\varphi_u)\bigr) 
=-\mathcal{N}(1+\varphi_u), \text{ and}
\]
\[
X^{-1}\varphi'_{u/2}\mathfrak{N}(1+\varphi_u)X =
(-1+\varpi u)^{-1}
\begin{pmatrix}
 -1+\varpi u&2&\hdots&2/(\varpi u)\\
 0&1+\varpi u&&\mbox{\Large 2}\\
 \vdots&&\ddots&\\
 0&\mbox{\Large $2\varpi u$}&&1+\varpi u
\end{pmatrix}.
\]
\end{proof}

\section{Main theorem}\label{sec:main}
\subsection{Endoscopic character relation}
In this subsection we recall the endoscopic classification of representations of classical groups in \cite{MR3135650}, for the cases of odd special orthogonal groups.

As in previous sections, let $\mathbf{G}$ be $\GL_{2n}$ over $F$, and $\theta$ the automorphism of $\mathbf{G}$ over $F$ defined by
\[
g \mapsto J\,{}^{t}\!g^{-1}J^{-1} \text{, where }
J=\begin{pmatrix}
 &&&1\\
 &&-1&\\
 &\adots&&\\
 (-1)^{2n-1}&&&
\end{pmatrix}.
\]
Let $\mathbf{H}$ be $\SO_{2n+1}$ over $F$.

We write $\Phi(G)$ (resp.\ $\Phi(H)$) for the set of $\widehat{G}$-conjugacy classes of $L$-parameters of $G$ (resp.\ $\widehat{H}$-conjugacy classes of $L$-parameters of $H$).
For $\phi_{H} \in \Phi(H)$, we set
\begin{align*}
 S_{\phi_{H}} &:= \Cent_{\widehat{H}}\bigl(\mathrm{Im}(\phi_H)\bigr) \text{, and }\\
 \mathcal{S}_{\phi_{H}} &:= S_{\phi_{H}} / S_{\phi_{H}}^0Z(\widehat{H}).
\end{align*}
Here, we fix a representative of $\phi_{H}$ and denote it again by $\phi_{H}$.

Before we state Arthur's theorem, we explain the normalizations of $\theta$-twisted distribution characters of self-dual irreducible smooth representations of $G$ determined by the Whittaker datum.
In the following, we fix the $\theta$-stable Whittaker datum $(\mathbf{B}, \lambda)$ defined in Section \ref{sec:norm}.3.
Let $\pi$ be a self-dual irreducible tempered representation of $G$.
Then $\pi$ is generic for $(\mathbf{B}, \lambda)$ (hence for every Whittaker datum), 
that is 
\[
\Hom_{U}(\pi, \lambda)\neq0.
\]
In general, it is known that the dimension of $\Hom_{U}(\pi, \lambda)$ is not greater than one.
Therefore $\Hom_{U}(\pi, \lambda)$ is $1$-dimensional if $\pi$ is generic.
If we take a nonzero element $\Lambda$ of $\Hom_{U}(\pi, \lambda)$, then it also belongs to $\Hom_{U}(\pi^{\theta}, \lambda^{\theta})=\Hom_{U}(\pi^{\theta}, \lambda)$.
On the other hand, since $\pi$ is self-dual, there exists a $G$-equivariant isomorphism $A\colon\pi\cong\pi^{\theta}$ (this is unique up to a scalar multiple).
Then the composite $\Lambda\circ A$  is again a nonzero element of $\Hom_{U}(\pi, \lambda)$, hence equals $c\cdot \Lambda$ for a scalar $c\in\C^{\times}$, which does not depend on $\Lambda$.

In the following, for a self-dual irreducible tempered representation $\pi$ of $G$, we take the unique intertwiner $A\colon\pi\cong\pi^{\theta}$ such that $\Lambda\circ A$ is equal to $\Lambda$.
We note that, for a simple supercuspidal representation of $G$, this normalization coincides with that of Section \ref{sec:char}.2.
Namely the following proposition holds.

\begin{prop}
Let $a \in k^{\times}$ and $\zeta\in\{\pm1\}$, and we consider the simple supercuspidal representation $\pi_{a,\zeta}$.
The intertwiner $A\colon\pi_{a,\zeta}\cong\pi_{a,\zeta}^{\theta}$ induced from the identity map $\chi_{a,\zeta}\ra\chi_{a,\zeta}$ $(=\chi_{a,\zeta}^{\theta})$ satisfies $\Lambda=\Lambda\circ A$ for every $\Lambda\in\Hom_{U}(\pi_{a,\zeta}, \lambda)$.
\end{prop}

\begin{proof}
Since the ratio of $\Lambda$ to $\Lambda\circ A$ does not depend on $\Lambda$, it suffices to find one homomorphism $\Lambda\in\Hom_{U}(\pi_{a,\zeta}, \lambda)$ which is nonzero and satisfies $\Lambda=\Lambda\circ A$.

We first note that the intertwiner 
\[
A\colon \pi_{a,\zeta} = \cInd_{Z I^{+}\lan\varphi_{a^{-1}}\ran}^{G} \chi_{a,\zeta} = \cInd_{Z I^{+}\lan\varphi_{a^{-1}}\ran}^{G} \chi_{a,\zeta}^{\theta} \cong \pi_{a,\zeta}^{\theta}
\] 
induced from the identity map of $\chi_{a,\zeta}$ is given by
\[
 f \mapsto \theta^{\ast}(f)=f\circ\theta.
\]
Here we regard $f\in \pi_{a,\zeta}$ as a function on $G$.
Since $\Lambda$ is a scalar multiple of $\Lambda\circ A$, it is enough to find $\Lambda\in\Hom_{U}(\pi_{a,\zeta}, \lambda)$ satisfying
\[
\Lambda(\tilde{\chi}_{a,\zeta})=(\Lambda\circ A)(\tilde{\chi}_{a,\zeta})\neq0,
\]
where $\tilde{\chi}_{a,\zeta}\in\pi_{a,\zeta}$ is the zero extension of $\chi_{a,\zeta}$ to $G$:
\[
\tilde{\chi}_{a,\zeta}(g) := 
\begin{cases}
 \chi_{a,\zeta}(g) & (g\in Z I^{+}\lan\varphi_{a^{-1}}\ran),\\
 0 & (\text{otherwise}).
\end{cases}
\]
However, for every $\Lambda\in\Hom_{U}(\pi_{a,\zeta}, \lambda)$, we have
\[
(\Lambda\circ A)(\tilde{\chi}_{a,\zeta})
=\Lambda(\tilde{\chi}_{a,\zeta}\circ\theta)
=\Lambda(\tilde{\chi}_{a,\zeta}).
\]
Thus we are reduced to finding $\Lambda\in\Hom_{U}(\pi_{a,\zeta}, \lambda)$ such that $\Lambda(\tilde{\chi}_{a,\zeta})\neq0$.

By the Frobenius reciprocities for the compact and smooth inductions, we have
\[
\Hom_{Z I^{+}\lan\varphi_{a^{-1}}\ran}(\chi_{a,\zeta}, \Ind_{U}^{G}\lambda)
\cong
\Hom_{G}\bigl(\pi_{a,\zeta}, \Ind_{U}^{G}\lambda\bigr)
\cong
\Hom_{U}(\pi_{a,\zeta}, \lambda).
\]
If we write $\Lambda'$ for the element of $\Hom_{Z I^{+}\lan\varphi_{a^{-1}}\ran}(\chi_{a,\zeta}, \Ind_{U}^{G}\lambda)$ corresponding to $\Lambda$ under these isomorphisms,
then, by chasing the constructions of the isomorphisms of the Frobenius reciprocities, we have 
\[
\Lambda(\tilde{\chi}_{a,\zeta})\neq0\Leftrightarrow\bigl(\Lambda'(1)\bigr)(1)\neq0.
\]
Thus it suffices to construct $\Lambda'\in\Hom_{Z I^{+}\lan\varphi_{a^{-1}}\ran}(\chi_{a,\zeta}, \Ind_{U}^{G}\lambda)$ satisfying $(\Lambda'(1))(1)\neq0$.

We put
\[
W(g) := 
\begin{cases}
 \lambda(u)\chi_{a,\zeta}(x) & (g=ux \text{ for } u\in U, x\in ZI^{+}\lan\varphi_{a^{-1}}\ran),\\
 0 & (\text{otherwise}).
\end{cases}
\]
This is well-defined since $\lambda$ coincides with $\chi_{a,\zeta}$ on $U\cap ZI^{+}\lan\varphi_{a^{-1}}\ran$, and an element of $\Ind_{U}^{G}\lambda$.
We define a $\C$-linear map $\Lambda'$ from $\chi_{a,\zeta}$ to $\Ind_{U}^{G}\lambda$ by $\Lambda'(1):=W$.
Then $\Lambda'$ is $Z I^{+}\lan\varphi_{a^{-1}}\ran$-equivariant and satisfies $(\Lambda'(1))(1)\neq0$.
Thus $\Lambda'$ is as desired.
\end{proof}

Now we state Arthur's local classification theorem.

\begin{thm}[{\cite[Theorems 1.5.1 and 2.2.1]{MR3135650}}]\label{thm:Arthur}
For every tempered $L$-parameter $\phi_{H} \in \Phi(H)$, there is a finite set $\Pi_{\phi_{H}}$ (which is called an $L$-packet) consisting of irreducible tempered representations of $H$, and the following properties hold.
\begin{itemize}
 \item There is a bijection from $\Pi_{\phi_{H}}$ to the group $\widehat{\mathcal{S}}_{\phi_{H}}$ of characters of $\mathcal{S}_{\phi_{H}}$.
 \item The distribution $\sum_{\pi_{H} \in \Pi_{\phi_{H}}} \tr \pi_H$ on $\mathcal{H}(H)$ is stable.
 \item For every $f \in \mathcal{H}(G)$, we have the following equality:
 \[
 \tr \pi_\theta(f) = \sum_{\pi_H \in \Pi_{\phi_{H}}} \tr \pi_H(f^H), 
 \]
 where $\pi$ is the self-dual irreducible tempered representation of $G$ corresponding to $\xi\circ\phi_{H}\in\Phi(G)$ via the local Langlands correspondence for $G$, $\tr \pi_\theta$ is its $\theta$-twisted distribution character with respect to the normalization determined by the fixed Whittaker datum $(\mathbf{B},\lambda)$, and $f^H\in\mathcal{H}(H)$ is a transfer of $f$ to $H$.
\end{itemize}
\end{thm}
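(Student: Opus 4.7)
The plan is to follow Arthur's global strategy from \cite{MR3135650}, which is the only known approach to this theorem; a genuinely independent proof is out of reach. The argument is intrinsically global, since the desired local identity is extracted from a comparison of trace formulas on an auxiliary number field, and therefore cannot be produced by manipulations purely over $F$.

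The concrete outline has three layers. First, I would globalize the data: choose a totally real number field $\dot F$ with a single distinguished place $v_0$ such that $\dot F_{v_0}=F$, globalize $H$ to a simple twisted endoscopic datum $\dot H$ of $\GL_N/\dot F$, and globalize $\phi$ to an automorphic parameter $\dot\phi$ of $\dot H$ with $\dot\phi_{v_0}=\phi$ and sufficiently rigid behavior (unramified, or highly regular at the archimedean places) at all other places. Second, I would invoke the stabilization of the $\theta$-twisted trace formula for $\GL_N(\mathbb{A}_{\dot F})$ and of the ordinary stable trace formula for $\dot H(\mathbb{A}_{\dot F})$, together with the transfer $f\mapsto f^H$ of test functions, in order to obtain a global identity of stable distributions attached to $\dot\phi$. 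Third, I would run an induction on $N$ (with parabolic induction handling non-discrete $\phi$) and choose test functions that are prescribed at $v_0$ and are units or suitable pseudocoefficients elsewhere, so as to isolate a single local factor at $v_0$; this produces the stated character identity.

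The finite set $\widetilde\Pi_\phi$ and its bijection to $\widehat{\mathcal{S}_\phi}$ are then extracted by applying characters of $\mathcal{S}_\phi$ to the spectral side of the resulting distribution and using orthogonality, so that the irreducible constituents appearing are precisely the members of the packet. The principal obstacle is the stabilization of the twisted trace formula for $\GL_N$ and of the standard trace formula for even orthogonal groups; this is taken as a working hypothesis in \cite{MR3135650} and has since been resolved in work of M{\oe}glin--Waldspurger. A secondary technical ingredient is the fundamental lemma of Ng\^{o}, which is needed to construct the transfer $f\mapsto f^H$ and to match the orbital integrals entering the trace formula comparison.
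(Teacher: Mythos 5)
The paper does not prove this statement; it cites it directly as \cite[Theorems 1.5.1 and 2.2.1]{MR3135650}, i.e.\ as Arthur's endoscopic classification, and uses it as a black box. Your proposal is therefore not being compared against a proof in the paper but against Arthur's own argument, and as a high-level summary of that argument it is accurate: globalization of the local parameter, stabilization of the twisted trace formula for $\GL_N$ and the standard trace formula for the endoscopic groups, the transfer of test functions (resting on the fundamental lemma), induction on $N$, and extraction of the local identity at a distinguished place by isolating the spectral contribution of the globalized parameter. You also correctly flag the two ingredients that Arthur treated as hypotheses and that were later supplied by M{\oe}glin--Waldspurger and Ng\^o. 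In the context of the present paper, though, the right answer is simply ``this is Theorem 1.5.1 and 2.2.1 of Arthur's book, taken as given''; re-deriving Arthur's classification is far beyond the scope of what the paper attempts, and the paper's contribution begins only after this theorem is assumed.
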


We remark that in \cite{MR3135650} this theorem is stated in terms of the twisted space (for example, the $\theta$-twisted distribution of $\pi$ is regarded as a $\C$-linear functional on $\mathcal{H}(G\rtimes\theta)$).
We can translate this formalism into our language (see e.g.\ \cite[I.2.6 and I.5.11]{MR3632513}).

By using a stable version of the twisted Weyl integration formula, we can rewrite the relation in Theorem \ref{thm:Arthur} in terms of the characters $\Theta_{\pi, \theta}$ and $\Theta_{\pi_H}$ as follows (see e.g.\ Section 5 in \cite{MR3067291}).

\begin{thm}\label{thm:ECR}
Let $\phi_{H} \in \Phi(H)$, $\Pi_{\phi_{H}}$ the $L$-packet of $\phi_{H}$ defined in Theorem \ref{thm:Arthur}, and $\pi$ the irreducible smooth representation of $G$ which corresponds to $\xi\circ\phi_{H}\in\Phi(G)$ under the local Langlands correspondence for $G$.
Then, for every $g \in G^{\strs}$, we have the following equality: 
\[
\Theta_{\pi, \theta}(g) = \sum_{h\mapsto g} \frac{D_{\mathbf{H}}(h)^2}{D_{\mathbf{G},\theta}(g)^2} \Delta_{\mathbf{H},\mathbf{G}}(h, g) \sum_{\pi_{H}\in\Pi_{\phi_{H}}}\Theta_{\pi_H}(h),
\]
where the sum is over stable conjugacy classes of norms $h \in H^{\srs}$ of $g$.
\end{thm}

We next specialize this relation to the case of simple supercuspidal representations.
We use the following result on the parity of simple supercuspidal representations. 

\begin{prop}[{\cite[Corollary 4.6 (iii)]{Mieda:2016}}]\label{prop:Mieda}
Let $\pi$ be a self-dual simple supercuspidal representation of $G$ with trivial central character.
Then, up to $\widehat{G}$-conjugation, the $L$-parameter of $\pi$ factors through the $L$-group ${}^{L}\mathbf{H}$ of $\mathbf{H}$.
\end{prop}

Let $\pi$ be a self-dual simple supercuspidal representation of $G$ with trivial central character, and $\phi$ the $L$-parameter of $\pi$.
Then, by Proposition \ref{prop:Mieda}, $\phi$ induces an $L$-parameter $\phi_{H}\in\Phi(H)$.
By Theorem \ref{thm:Arthur}, $\phi_{H}$ defines a finite set $\Pi_{\phi_{H}}$ consisting of irreducible tempered representations of $H$.
Since $\pi$ is supercuspidal, the corresponding $L$-parameter $\phi \colon W_F \ra \widehat{G}=\GL_{2n}(\C)$ is irreducible as a representation of $W_F$.
Therefore $\Cent_{\widehat{G}}(\mathrm{Im}(\phi))$ consists of scalar matrices, and so does $\Cent_{\widehat{H}}(\mathrm{Im}(\phi_H))$.
Hence the group $\mathcal{S}_{\phi_{H}}$ is trivial and $\Pi_{\phi_{H}}$ is a singleton by Theorem \ref{thm:Arthur}.
We denote by $\pi_H$ the unique representation in $\Pi_{\phi_{H}}$ and say that $\pi_H$ is $\mathit{associated}$ $\mathit{to}$ $\pi$.
We remark that the character $\Theta_{\pi_H}$ of $\pi_H$ is stable by Theorem \ref{thm:Arthur}.
\[
\xymatrix{
\pi & \ar@{<~>}[r]^-{\text{LLC for $\GL_{2n}$}} && W_F \ar[rd]_-{\phi_{H}} \ar[r]^-{\phi} & {}^{L}\mathbf{G}\\
{\Pi_{\phi_{H}}=\{\pi_{H}\}} & \ar@{<~>}[r]^-{\text{Theorem \ref{thm:Arthur}}} &&& {}^{L}\mathbf{H} \ar@{^{(}->}[u]_-{\xi} \\
}
\]

Our purpose is to determine $\pi_H$ by using the relation in Theorem \ref{thm:ECR}.
We can simplify this equality by Lemma \ref{lem:uniquenorm} and the computation of the transfer factor $\Delta_{\mathbf{H},\mathbf{G}}$ in Section 4.3.

\begin{cor}\label{cor:ECR}
Let $\Theta_{\pi, \theta}$ be the twisted character of $\pi$ with respect to the normalization determined by the fixed Whittaker datum $(\mathbf{B}, \lambda)$, and $\Theta_{\pi_H}$ the character of $\pi_H$.
Let $g \in G^{\strs}$ and $h \in H^{\srs}$ such that $h$ is a norm of $g$.
Then we have 
\[
\Theta_{\pi, \theta}(g)=\Theta_{\pi_H}(h).
\]
\end{cor}

\begin{proof}
Combining Theorem \ref{thm:ECR} with Lemma \ref{lem:uniquenorm}, we have
\[
\Theta_{\pi, \theta}(g)=\frac{D_{\mathbf{H}}(h)^2}{D_{\mathbf{G},\theta}(g)^2} \Delta_{\mathbf{H},\mathbf{G}}(h, g)\Theta_{\pi_H}(h).
\]
Moreover, by Proposition \ref{prop:Delta} and Lemma \ref{lem:Delta4}, we have
\[
\Theta_{\pi, \theta}(g)=\Theta_{\pi_H}(h).
\]
\end{proof}

In particular, for affine generic elements considered in Section \ref{sec:norm}.4, we get the following equalities of characters:
\begin{cor}\label{cor:CR}
Let $\zeta\in\{\pm1\}$ and $\pi_H$ the irreducible smooth representation of $H$ associated to the simple supercuspidal representation $\pi_{1,\zeta}$ of $G$.
\begin{enumerate}
\item
For $u \in k^{\times}$, we have 
\[
\Theta_{\pi_H}\bigl(\mathfrak{N}(1+\varphi_u)\bigr)=\Theta_{\pi_{1,\zeta}, \theta}(1+\varphi_u)=\Kl_{2^{2(n-1)}u}^{n+1}(\psi; 1, 2, \ldots, 2, 1).
\]
\item
For $u \in k^{\times}$, we have 
\[
\Theta_{\pi_H}\bigl(\varphi'_{u^2/2}\mathfrak{N}(1+\varphi_{u^2})\bigr)=\Theta_{\pi_{1,\zeta}, \theta}\bigl(\varphi_{u^2}(1+\varphi_{u^2})\bigr)=\zeta\left(\Kl_{2^nu}^n(\psi)+\Kl_{-2^nu}^n(\psi)\right).
\]
\end{enumerate}
\end{cor}

\begin{proof}
By Proposition \ref{prop:N}, $\mathfrak{N}(1+\varphi_u)\in H^{\srs}$ (resp.\ $\varphi_{u^2}(1+\varphi_{u^2})\in H^{\srs}$) is a norm of $1+\varphi_u\in G^{\strs}$ (resp.\ $\varphi_{u^2}(1+\varphi_{u^2}) \in G^{\strs}$).
Therefore we get the first equalities by Corollary \ref{cor:ECR}.

On the other hand, since $\mathcal{N}(1+\varphi_u)$ (resp.\ $-\mathcal{N}(\varphi_{u^2}(1+\varphi_{u^2}))$) is an affine generic element of $G$ (see the proof of Proposition \ref{prop:N}), $1+\varphi_{u}$ (resp.\ $1+\varphi_{u^{2}}$) satisfies the assumption of Proposition \ref{prop:charTGL} (resp.\ \ref{prop:charTGL'}).
The simple affine components of $1+\varphi_{u}$ and $1+\varphi_{u^{2}}$ are given by $(1, \ldots, 1, u)$ and $(1, \ldots, 1, u^{2})$, respectively.
Therefore, by Propositions \ref{prop:charTGL} and \ref{prop:charTGL'}, we have
\begin{align*}
\Theta_{\pi_{1,\zeta}, \theta}(1+\varphi_u)&=\Kl_{2^{2(n-1)}u}^{n+1}(\psi; 1, 2, \ldots, 2, 1), \text{ and}\\
\Theta_{\pi_{1,\zeta}, \theta}\bigl(\varphi_{u^2}(1+\varphi_{u^2})\bigr)&=\zeta\left(\Kl_{2^nu}^n(\psi)+\Kl_{-2^nu}^n(\psi)\right).
\end{align*}
Hence we get the second equalities.
\end{proof}

%
%
%

\subsection{Supercuspidality of $\pi_H$}
\begin{prop}\label{prop:supercuspidality}
Let $\pi$ be a self-dual simple supercuspidal representation of $G$ with trivial central character.
Let $\pi_H$ be the irreducible smooth representation of $H$ associated to $\pi$.
Then $\pi_H$ is supercuspidal.
\end{prop}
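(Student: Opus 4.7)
The strategy is to analyze the Langlands parameter $\phi_H$ of the candidate lift $\pi_H$ and invoke a known characterization of supercuspidal members of tempered $L$-packets for $\SO_{2n+1}$, rather than arguing directly from the character computations of Section 3.

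First, since $\pi$ is a (simple) supercuspidal representation of $\GL_{2n}(F)$, the local Langlands correspondence for $\GL_{2n}$ forces its $L$-parameter $\phi$ to be trivial on the $\SL_2(\C)$-factor of $W_F \times \SL_2(\C)$ and irreducible as a $2n$-dimensional representation of $W_F$. By Proposition \ref{prop:Mieda}, $\phi$ factors as $\phi = \iota \circ \phi_H$ with $\phi_H \colon W_F \ra \Sp_{2n}(\C) = \widehat{H}$, and the irreducibility of $\phi$ is then inherited by $\iota \circ \phi_H$ as a $2n$-dimensional representation of $W_F$.

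Next, I would verify that $\phi_H$ is a \emph{discrete} $L$-parameter of $H$, meaning that its image is not contained in ${}^LM$ for any proper Levi subgroup $M$ of $H$. The dual groups of standard proper Levi subgroups of $\SO_{2n+1}$ have the form $\prod_i \GL_{n_i}(\C) \times \Sp_{2m}(\C)$ with $m<n$, and under $\iota$ they embed block-diagonally into $\GL_{2n}(\C)$ so that each $\GL_{n_i}(\C)$-block appears together with its dual. If $\phi_H$ factored through such a proper Levi, then $\iota \circ \phi_H = \phi$ would decompose as a nontrivial direct sum of representations of $W_F$, contradicting the irreducibility established in the previous step. Hence $\phi_H$ is a discrete $L$-parameter whose restriction to $\SL_2(\C)$ is trivial.

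Finally, I would invoke the Moeglin--Xu parameterization of supercuspidal representations of $\SO_{2n+1}(F)$ from \cite{Xu:2015aa}: discrete tempered $L$-parameters trivial on $\SL_2(\C)$ correspond precisely to $L$-packets of supercuspidal representations. Since $\widetilde{\Pi}_{\phi_H}$ has already been shown to be the singleton $\{\pi_H\}$ (from triviality of $\mathcal{S}_\phi$ for irreducible $\phi$), this characterization immediately yields the supercuspidality of $\pi_H$. The nontrivial external input is the Moeglin--Xu theorem; the rest of the argument is formal, reducing to the irreducibility of $\phi$ and an inspection of Levi subgroups of $\SO_{2n+1}$.
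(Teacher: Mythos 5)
Your proof takes essentially the same route as the paper: both rest on the Moeglin--Xu criterion for supercuspidality \cite{Xu:2015aa}, and both trace back to the irreducibility of $\phi$ as a representation of $W_F$. Two points of comparison are worth making. First, your middle step (inspecting proper Levi subgroups of $\SO_{2n+1}$ to show $\phi_H$ is discrete) is a detour: since $\iota\colon\Sp_{2n}(\C)\hookrightarrow\GL_{2n}(\C)$ is the standard embedding, irreducibility of $\phi = \iota\circ\phi_H$ directly says that $\phi_H$ is an irreducible symplectic representation of $W_F$, i.e.\ $\mathrm{Jord}(\phi_H)$ is the singleton $\{(\pi,1)\}$. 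That is exactly what the paper's proof uses, and it simultaneously gives discreteness (no need to enumerate Levi subgroups). Second, your final step invokes the blanket claim that ``discrete tempered $L$-parameters trivial on $\SL_2(\C)$ correspond precisely to $L$-packets of supercuspidal representations.'' This is stronger and more general than what \cite[Theorem 3.3]{Xu:2015aa} literally says: Xu's criterion is a member-by-member statement involving conditions on $\mathrm{Jord}(\phi_H)$ and on the character of $\mathcal{S}_\phi$, not an assertion that entire packets are supercuspidal whenever $\phi_H|_{\SL_2}$ is trivial and $\phi_H$ is discrete. In the present situation the packet is a singleton with trivial component-group character, so the conditions in Xu's theorem are vacuously satisfied and the conclusion is the same; but the way you have phrased the external input is imprecise as a citation. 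The paper's formulation---verify the hypotheses of Xu's theorem directly for the singleton $\mathrm{Jord}(\phi_H)=\{(\pi,1)\}$---is cleaner and avoids asserting a general fact whose scope would need separate justification.
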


\begin{proof}
We use the theorem on a parametrization of supercuspidal representations of classical groups in \cite{MR3713922}.
When viewed as a representation of $W_F$, $\phi$ is irreducible by the supercuspidality of $\pi$.
Therefore the set $\mathrm{Jord}(\phi_H)$ is a singleton $\{(\pi, 1)\}$ (see Section 2 in \cite{MR3713922} for the definition of the set $\mathrm{Jord}(\phi_{H})$).
This satisfies the properties in \cite[Theorem 3.3]{MR3713922}, therefore $\pi_H$ is supercuspidal.
\end{proof}

\subsection{Existence of $I_H^{++}$-fixed vector}
Let $\zeta \in \{\pm1\}$ and $\pi_H$ the irreducible smooth representation of $H$ associated to $\pi_{1,\zeta}$.

\begin{lem}\label{lem:charlift}
Let $h \in H$ be an affine generic element with its simple affine components $(h_1, \ldots, h_n, h_{2n})$.
Then $\Theta_{\pi_H}(h)$ is equal to either $0$ or 
\[
\Kl_{h_1 h_2^2\cdots h_n^2 h_{2n}/2}^{n+1}(\psi; 1, 2, \ldots, 2, 1).
\]
\end{lem}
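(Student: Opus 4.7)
The plan is to apply the endoscopic character relation of Corollary \ref{cor:ECR} and reduce the character computation to the twisted character formula of Proposition \ref{prop:charTGL}. By Proposition \ref{prop:affgennorm}, I first choose $g \in G^{\strs}(F)$ such that $h$ is a norm of $g$ and $N(g) \in I^+$ is affine generic with affine simple components $(h_2, \ldots, h_n, h_n, \ldots, h_1, 2h_{2n})$. Corollary \ref{cor:ECR} then gives $\Theta_{\pi_H}(h) = c \cdot \Theta_{\pi, \theta}(g)$, so the task reduces to evaluating the twisted character at $g$.

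By the twisted character formula (Theorem \ref{thm:TCF}), $\Theta_{\pi, \theta}(g) = 0$ unless $g$ is $\theta$-conjugate to some element of $K = ZI^+\langle\varphi\rangle$, which furnishes the first alternative of the lemma. In the remaining case, write a $\theta$-conjugate of $g$ as $zi\varphi^m$ with $z \in Z$, $i \in I^+$, $m \in \Z/2n\Z$. Using $\theta(\varphi) = -\varphi^{-1}$, a direct computation yields
\[
N(zi\varphi^m) = (-1)^m \cdot i \cdot (\varphi^m \theta(i) \varphi^{-m}),
\]
which is $(-1)^m$ times an element of $I^+$. Since $N(g) \in I^+$ is affine generic, its $G(F)$-conjugates have eigenvalues in $1 + \mathfrak{p}^{\mathrm{alg}}$; this excludes the possibility $(-1)^m = -1$ for odd $p$, and forces $m$ to be even. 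A further $\theta$-conjugation by $\varphi$ shifts $m$ by $+2 \pmod{2n}$, so one may normalize $m = 0$; by the triviality of the central character, we may then take $g \in I^+$.

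Proposition \ref{prop:charTGL} applied to this $g$ now gives
\[
\Theta_{\pi, \theta}(g) = \Kl_{g_n(g_1+g_{2n-1})^2 \cdots (g_{n-1}+g_{n+1})^2 g_{2n}}^{n+1}(\psi;\, 1, 2, \ldots, 2, 1),
\]
where $(g_1, \ldots, g_{2n})$ are the affine simple components of $g$. To rewrite the subscript in terms of $h$, I compute the product of the affine simple components of $N(g)$. Using $N(g)_i = g_i + g_{2n-i}$ (symmetric in $i \leftrightarrow 2n-i$) and $N(g)_{2n} = 2 g_{2n}$, this product equals $4 g_n g_{2n} \prod_{i=1}^{n-1}(g_i+g_{2n-i})^2$. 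By direct expansion, this product coincides (up to sign) with the leading $\varpi$-coefficient of the Eisenstein polynomial $\det(N(g) - I)$ from Lemma \ref{lem:Eisen}, and is therefore a $G(F)$-conjugation invariant of $N(g)$. Evaluating the same invariant on the explicit matrix $h'$ of Lemma \ref{lem:partialdiag}, whose components $(h_2, \ldots, h_n, h_n, \ldots, h_1, 2h_{2n})$ have product $2 h_1 h_2^2 \cdots h_n^2 h_{2n}$, yields the identity
\[
g_n(g_1+g_{2n-1})^2 \cdots (g_{n-1}+g_{n+1})^2 g_{2n} = \tfrac{1}{2}\, h_1 h_2^2 \cdots h_n^2 h_{2n},
\]
from which the second alternative of the lemma follows.

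The main obstacle is the second paragraph: showing that in the non-zero case $g$ can be reduced to $I^+$ up to $\theta$-conjugation. This parity argument depends crucially on combining the affine genericity of $N(g)$ (which locates its eigenvalues near $1$) with the twisting identity $\theta(\varphi) = -\varphi^{-1}$.
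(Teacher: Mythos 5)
Your proposal is correct and follows essentially the same structure as the paper's proof: reduce to $\Theta_{\pi,\theta}(g)$ via Corollary \ref{cor:ECR} and Proposition \ref{prop:affgennorm}, split into the vanishing case and the case where a $\theta$-conjugate of $g$ lands in $K$, use $\theta(\varphi)=-\varphi^{-1}$ and the condition $N(g)\in I^+$ to rule out $I^+\varphi$ and reduce to $I^+$, then invoke Proposition \ref{prop:charTGL}. Two small remarks on details you gloss over. First, your formula $N(zi\varphi^m)=(-1)^m i(\varphi^m\theta(i)\varphi^{-m})$ silently uses $z\theta(z)=1$; this is true for $G=\GL_{2n}$ since $\theta(\lambda I)=\lambda^{-1}I$, but deserves a word. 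Second, you establish the final identity $g_n(g_1+g_{2n-1})^2\cdots g_{2n}=\tfrac12 h_1h_2^2\cdots h_n^2 h_{2n}$ by noting that the product of the affine simple components of an affine generic element is a $G(F)$-conjugation invariant (essentially $\det(N(g)-I)/\varpi$ modulo $\mathfrak p$, via the Eisenstein property of the characteristic polynomial of $N(g)-I$). The paper instead tracks the affine simple components explicitly through the conjugating element in $ZI\langle\varphi\rangle$, using that $N(\varphi^kxg\theta(\varphi^kx)^{-1})=\varphi^kxN(g)(\varphi^kx)^{-1}$ for $x\in T(q)$ and that $T(q)$- and $\varphi$-conjugation give telescoping scalings and a cyclic shift of the components, both of which preserve the product. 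Your determinant argument is a slightly more conceptual route to the same invariance and is perfectly valid, though the assertion that $\det(N(g)-I)\equiv\pm\varpi\cdot\prod(\text{aff.\ simple components})\pmod{\mathfrak p^2}$ is stated without verification and would warrant a line of justification.
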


\begin{proof}
For such $h$, we take $g \in G$ satisfying the conditions in Proposition \ref{prop:affgennorm}.
Since $h \in H^{\srs}$, $g \in G^{\strs}$, and $h$ is a norm of $g$, 
we have 
\[
\Theta_{\pi_{1,\zeta}, \theta}(g)=\Theta_{\pi_H}(h)
\]
by Corollary \ref{cor:ECR}.
We compute the left-hand side of this equality.

If there is no $x \in G$ such that $xg\theta(x)^{-1} \in ZI^+\lan\varphi_{1}\ran$, 
then $\Theta_{\pi_{1,\zeta}, \theta}(g)=0$ by the twisted character formula (Theorem \ref{thm:TCF}).

Let us consider the case where there exists $x \in G$ such that $xg\theta(x)^{-1} \in ZI^+\lan\varphi_{1}\ran$.
By Lemma \ref{lem:sumGL}, we may assume $x \in T(q)$.
By replacing $g$ with $gz$ for some $z \in Z$, we may assume that $xg\theta(x)^{-1} \in I^+\lan\varphi_{1}\ran$.
Since $\theta(\varphi_{1})=-\varphi_{1}^{-1}$ and $\varphi_{1}$ normalizes $I^+$, 
we have 
\[
(-1)^k\varphi_{1}^k xg\theta(\varphi_{1}^k x)^{-1} \in I^+\sqcup I^+\varphi_{1} 
\]
for some $k \in \Z$.
By replacing $g$ with $(-1)^kg$ again, we may assume that 
\[
\varphi_{1}^k xg\theta(\varphi_{1}^k x)^{-1} \in I^+\sqcup I^+\varphi_{1}.
\]

If $\varphi_{1}^k xg\theta(\varphi_{1}^k x)^{-1}$ lies in $I^+\varphi_{1}$, then 
\[
\mathcal{N}\bigl(\varphi_{1}^kxg\theta(\varphi_{1}^kx)^{-1}\bigr) = \varphi_{1}^kx\mathcal{N}(g)(\varphi_{1}^kx)^{-1} \in \mathcal{N}(I^+\varphi_{1})\subseteq-I^+,
\] 
and this contradicts the assumption that $\mathcal{N}(g) \in I^+$.
Therefore $\varphi_{1}^k xg\theta(\varphi_{1}^k x)^{-1} \in I^+$.
Since the twisted  character is invariant under $\theta$-conjugacy, 
it suffices to compute 
\[
\Theta_{\pi_{1,\zeta}, \theta}\bigl(\varphi_{1}^kxg\theta(\varphi_{1}^kx)^{-1}\bigr).
\]

Let $x=\diag(t_1, \ldots, t_{2n})$. 
Let $(g_1, \ldots, g_{2n})$ be the simple affine components of  $\varphi_{1}^k xg\theta(\varphi_{1}^k x)^{-1}$.
Then the simple affine components of $\mathcal{N}(\varphi_{1}^kxg\theta(\varphi_{1}^kx)^{-1})$ are 
\[
(g_1+g_{2n-1}, \ldots, 2g_n, \ldots, g_{2n-1}+g_1, 2g_{2n}).
\]
On the other hand, since the simple affine components of $\mathcal{N}(g)$ are 
\[
(h_2, \ldots, h_n, h_n, \ldots, h_1, 2h_{2n}),
\]
those of $\mathcal{N}(\varphi_{1}^kxg\theta(\varphi_{1}^kx)^{-1})=\varphi_{1}^kx\mathcal{N}(g)(\varphi_{1}^kx)^{-1}$ are a cyclic permutation of
\[
\left(\frac{t_1}{t_2}h_2, \ldots, \frac{t_{2n-1}}{t_{2n}}h_1, 2\frac{t_{2n}}{t_1}h_{2n}\right).
\]

Therefore $\varphi_{1}^kxg\theta(\varphi_{1}^kx)^{-1}$ satisfies the assumption of Proposition \ref{prop:charTGL}, 
and we have
\begin{align*}
\Theta_{\pi_{1,\zeta}, \theta}\bigl(\varphi_{1}^kxg\theta(\varphi_{1}^kx)^{-1}\bigr) &= \Kl_{g_n (g_1 +g_{2n-1})^2 \cdots (g_{n-1}+g_{n+1})^2 g_{2n}}^{n+1} (\psi; 1, 2, \ldots, 2, 1)\\
&= \Kl_{h_1 h_2^2\cdots h_n^2 h_{2n}/2}^{n+1} (\psi; 1, 2, \ldots, 2, 1).
\end{align*}
\end{proof}

\begin{cor}\label{cor:depthbound}
The representation $(\pi_H, V_H)$ has a nonzero $I_H^{++}$-fixed vector.
\end{cor}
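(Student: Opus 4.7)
The plan is to pass to the ``Fourier coefficients'' of $\Theta_{\pi_H}$ relative to the finite abelian group $I_H^+/I_H^{++}$. For each coset $\alpha I_H^{++}$ in $I_H^+$ set
\[
F(\alpha) = \frac{1}{\vol(I_H^{++})} \int_{\alpha I_H^{++}} \Theta_{\pi_H}(h)\, dh,
\]
and for each character $\chi$ of $I_H^+/I_H^{++}$ set
\[
m(\chi) = \dim V_H^{I_H^{++}}[\chi] = \frac{1}{|I_H^+/I_H^{++}|} \sum_{\alpha} \overline{\chi(\alpha)}\, F(\alpha) \in \Z_{\ge 0}.
\]
By Fourier inversion on $I_H^+/I_H^{++}$ one has $F(\alpha) = \sum_\chi \chi(\alpha)\, m(\chi)$, so if $F(\alpha)\neq 0$ for any single coset $\alpha$, then some $m(\chi)$ is non-zero, and $V_H^{I_H^{++}} \supset V_H^{I_H^{++}}[\chi]$ is non-zero.

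It therefore suffices to find a single $\alpha_0$ with $F(\alpha_0)\neq 0$. I would take $\alpha_0$ to be the coset of $N'(1+\varphi_u)$ for a $u \in k^{\times}$ making the Kloosterman sum $\Kl_{2^{2(n-1)}u}^{n+1}(\psi;1,2,\ldots,2,1)$ non-zero; the existence of such $u$ follows from the non-vanishing facts collected in Appendix~A. The coset $\alpha_0 I_H^{++}$ consists entirely of affine generic, hence strongly regular semisimple, elements by Proposition~\ref{prop:affgennorm}, so $\Theta_{\pi_H}$ is defined on all of it and, by Lemma~\ref{lem:charlift}, takes values in the two-element set $\{0,\, c\cdot\Kl_{h_1 h_2^2 \cdots h_n^2 h_{2n}/2}^{n+1}(\psi;1,2,\ldots,2,1)\}$ determined by the fixed affine simple components $(h_1,\ldots,h_n,h_{2n})$ of $\alpha_0$. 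By Corollary~\ref{cor:CR} the non-zero branch of this dichotomy is actually realised at $N'(1+\varphi_u)$, and local constancy of $\Theta_{\pi_H}$ on $H^{\rs}(F)$ upgrades this to an open, hence positive-measure, subset of $\alpha_0 I_H^{++}$.

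Integrating the two-valued function $\Theta_{\pi_H}|_{\alpha_0 I_H^{++}}$ then produces $F(\alpha_0)$ as a positive real constant times $c\cdot\Kl_{h_1 h_2^2 \cdots h_n^2 h_{2n}/2}^{n+1}(\psi;1,2,\ldots,2,1)$, which is non-zero by the choice of $u$, and the proof concludes. The only delicate point is the measure-theoretic upgrade of ``non-zero at $N'(1+\varphi_u)$'' to ``non-zero on a set of positive measure'': this is handled by the standard local constancy of the character of an admissible representation on the regular semisimple locus, together with the strong regularity of $N'(1+\varphi_u)$ coming from Proposition~\ref{prop:N}.
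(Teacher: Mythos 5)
Your proposal is correct and is essentially the same argument as the paper's: both hinge on showing that $\int_{hI_H^{++}}\Theta_{\pi_H}\,dy\neq 0$ for $h=N'(1+\varphi_u)$ via Proposition~\ref{prop:affgennorm}, Lemma~\ref{lem:charlift}, Corollary~\ref{cor:CR}, local constancy of $\Theta_{\pi_H}$, and the Kloosterman non-vanishing in Appendix~A, then deducing that $V_H^{I_H^{++}}$ is non-zero because this integral computes a trace on $V_H^{I_H^{++}}$. The Fourier decomposition over $\widehat{I_H^+/I_H^{++}}$ is a cosmetic detour (and strictly speaking the integral definition of $F(\alpha)$ is only justified on cosets inside $H^{\rs}(F)$, so one should define $F(\alpha):=\tr(\pi_H(\alpha)\mid V_H^{I_H^{++}})$ abstractly and use Theorem~\ref{thm:char} only for $\alpha_0$); the paper avoids it by directly observing that $\tr\pi_H(\mathbf{1}_{hI_H^{++}})=\tr\big(\pi_H(\mathbf{1}_{hI_H^{++}})\mid V_H^{I_H^{++}}\big)$.
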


\begin{proof}
We take $u \in k^{\times}$ such that $\Kl_{2^{2(n-1)}u}^{n+1}(\psi; 1, 2, \ldots, 2, 1) \neq 0$ (such $u\in k^{\times}$ exists by Corollary \ref{cor:Kl}).
Let $h:=\mathfrak{N}(1+\varphi_u) \in I_H^+$ and $f := \bold{1}_{hI_H^{++}}$ the characteristic function of $hI_H^{++}$.
Since the subgroup $I_H^{++}$ is normal in $I_H^+$, we have $I_H^{++}hI_H^{++}=hI_H^{++}$.
Moreover $hI_{H}^{++}$ is contained in $H^{\rs}$ by Proposition \ref{prop:affgennorm}.
Therefore $f$ belongs to $\mathcal{H}(H, I_H^{++})$ and satisfies $\supp(f) \subseteq H^{\rs}$.

By a property of the character (Theorem \ref{thm:char}), we have
\[
\tr \pi_H(f) = \int_{H^{\rs}} f(y)\Theta_{\pi_H}(y) \,dy = \int_{hI_H^{++}} \Theta_{\pi_H}(y) \,dy.
\]
For $y \in hI_H^{++}$, $\Theta_{\pi_H}(y)$ is equal to either $\Kl_{2^{2(n-1)}u}^{n+1}(\psi; 1, 2, \ldots, 2, 1)$ or $0$, by Lemma \ref{lem:charlift}.
Moreover we have 
\[
\Theta_{\pi_H}(h) = \Theta_{\pi, \theta}(1+\varphi_u) = \Kl_{2^{2(n-1)}u}^{n+1}(\psi; 1, 2, \ldots, 2, 1) \neq 0
\]
by Corollary \ref{cor:CR}.
Since the character $\Theta_{\pi_H}$ is locally constant on $H^{\rs}$, we have
\[
\int_{hI_H^{++}} \Theta_{\pi_H}(y) \,dy \neq 0.
\]

On the other hand, by the definition of the distribution character, 
\[
\tr \pi_H(f) = \tr \Bigl(\pi_H(f) \,\Big\vert\, V_H^{I_H^{++}}\Bigr).
\]
Therefore we have $V_H^{I_H^{++}} \neq 0$.
\end{proof}

\subsection{Simple supercuspidality of $\pi_H$}
Let $\pi_{1,\zeta}$ and $\pi_H$ be as in the previous subsection.
In this subsection, we prove that $\pi_H$ is simple supercuspidal.

By Corollary \ref{cor:depthbound}, the finite abelian group $I_H^+/I_H^{++}$ acts on the nonzero subspace $\pi_H^{I_H^{++}}$ of $\pi_H$, and it decomposes into a direct sum of characters of $I_H^+/I_H^{++}$.
We take a character $\eta$ of $I_H^+$ which is contained in it.

\begin{lem}\label{lem:Mackey}
If $\eta$ is an affine generic character of $I_H^+$, then $\pi_H$ is simple supercuspidal.
\end{lem}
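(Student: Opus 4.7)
The plan is to exhibit $\pi_{H}$ as a quotient of a compactly induced representation from the affine generic character $\eta$, and then invoke Proposition \ref{prop:ssc} to conclude that every irreducible quotient of such an induction is simple supercuspidal.

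More precisely, the assumption that $\eta$ appears as a summand in the decomposition of $\pi_{H}^{I_{H}^{++}}$ under the finite abelian quotient $I_{H}^{+}/I_{H}^{++}$ means that $\Hom_{I_{H}^{+}}(\eta,\pi_{H}|_{I_{H}^{+}})\neq 0$. By Frobenius reciprocity for compact induction,
\[
\Hom_{H(F)}\bigl(\cInd_{I_{H}^{+}}^{H(F)}\eta,\pi_{H}\bigr)\cong\Hom_{I_{H}^{+}}(\eta,\pi_{H}|_{I_{H}^{+}})\neq 0,
\]
so $\pi_{H}$ is a nonzero quotient of $\cInd_{I_{H}^{+}}^{H(F)}\eta$. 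Since the center of $\SO_{2n+1}$ is trivial, the pair $(Z_{H}I_{H}^{+},\eta)=(I_{H}^{+},\eta)$ falls exactly into the setting of Proposition \ref{prop:ssc} (with $\eta$ playing the role of the affine generic character $\psi$). That proposition then gives a decomposition
\[
\cInd_{I_{H}^{+}}^{H(F)}\eta\cong\bigoplus_{\chi}\dim(\chi)\cdot\pi_{\chi},
\]
in which every $\pi_{\chi}$ is an irreducible simple supercuspidal representation of $H(F)$.

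Because $\pi_{H}$ is irreducible and is a quotient of this direct sum, it must coincide with one of the $\pi_{\chi}$'s. Hence $\pi_{H}$ is a simple supercuspidal representation of $\SO_{2n+1}(F)$, as required. There is no real obstacle here beyond recognising that Corollary \ref{cor:depthbound} already supplies the nonvanishing of $\pi_{H}^{I_{H}^{++}}$, that the triviality of the center of $\SO_{2n+1}$ lets us apply Proposition \ref{prop:ssc} directly to $\eta$, and that Frobenius reciprocity for compact induction converts the appearance of $\eta$ in $\pi_{H}|_{I_{H}^{+}}$ into a surjection from the induced module; the genuine work was already done in verifying the hypothesis on $\eta$ (the affine genericity of some $I_{H}^{+}$-isotypic character in $\pi_{H}^{I_{H}^{++}}$), which will occupy the subsequent subsections.
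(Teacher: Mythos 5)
Your proposal is correct and matches the paper's argument essentially verbatim: Frobenius reciprocity for compact induction from the open subgroup $I_H^+$ produces a nonzero map $\cInd_{I_H^+}^{H(F)}\eta \to \pi_H$, and then Proposition \ref{prop:ssc} (using the triviality of the center of $\SO_{2n+1}$, as recorded in Remark \ref{rem:ssc}) identifies the induced representation as a finite direct sum of irreducible simple supercuspidals, forcing the irreducible quotient $\pi_H$ to be one of them.
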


\begin{proof}
By the Frobenius reciprocity, we have
\[
\Hom_{H}\Bigl(\cInd_{I_H^+}^{H}\eta, \pi_H\Bigr) \cong \Hom_{I_H^+}\Bigl(\eta, \pi_H|_{I_H^+}\Bigr)\neq 0.
\]
By Proposition \ref{prop:ssc} and Remark \ref{rem:ssc}, the representation $\cInd_{I_H^+}^{H}\eta$ is a direct sum of two simple supercuspidal representations.
Since $\pi_H$ is irreducible, it is equivalent to one of them.
%
\end{proof}

From this lemma, we are reduced to proving the affine genericity of $\eta$.
We first prove two technical lemmas which will be needed in the proof of the affine genericity of $\eta$.

\begin{lem}\label{lem:hyperspecial}
Let $H_0:=\SO_{2n+1}(\mcO)$ be a hyperspecial subgroup of $H$.
Let $y\in H$.
If $y$ satisfies $yhy^{-1} \in H_0$ for an affine generic element $h \in H$, then $y$ belongs to $H_0 \sqcup H_0\varphi'_{1}$.
\end{lem}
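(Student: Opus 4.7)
The plan is to adapt the strategy of Lemma \ref{lem:key}, with the hyperspecial subgroup $H_0$ replacing the Iwahori $I_H$. First, applying Proposition \ref{prop:I^+W} (whose statement and proof extend verbatim to $H$), I would write $y = u_1 n u_2$ with $u_1, u_2 \in I_H^+$ and $n$ in the normalizer of $T_H$ in $H(F)$. Since $I_H^+$-conjugation preserves affine genericity and $I_H^+ \subset H_0$, the hypothesis $yhy^{-1}\in H_0$ reduces to $n h' n^{-1} \in H_0$ for $h' := u_2 h u_2^{-1}$, which is again affine generic. Because $\varphi'_1$ normalizes $I_H^+$ by Lemma \ref{lem:NI}, we have $H_0 \varphi'_1 \cdot I_H^+ = H_0 \varphi'_1$, so it suffices to show $n \in H_0 \amalg H_0 \varphi'_1$.

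Second, I would apply the unique factorization $H_0 = T_0 \cdot \prod_{a \in \Phi^+} U_a(\mcO) \cdot \prod_{a \in \Phi^-} U_a(\mcO)$ together with the decomposition $h' = \prod_{a \in \Phi^+} u_a(x_a) \cdot t \cdot \prod_{a \in \Phi^-} u_a(x_a)$ used in the proof of Lemma \ref{lem:key}, where $u_a(x_a) \in U_{a+r_a}$ with $r_a = 0$ for $a \in \Phi^+$ and $r_a = 1$ for $a \in \Phi^-$. Let $\tilde w = t_\lambda w \in \widetilde{W}$ be the Iwahori-Weyl class of $n$. Conjugation by $n$ sends $U_{a+r_a}$ to the affine root subgroup with underlying root $wa$ and leading integer $r_a + \langle wa, \lambda \rangle$. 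Affine genericity of $h'$ means that the leading valuation of $x_a$ equals $r_a$ exactly when $\alpha = a + r_a \in \Pi$, so the uniqueness of the $H_0$-factorization forces
\[
r_a + \langle wa, \lambda \rangle \geq 0 \quad \text{for every } \alpha = a + r_a \in \Pi.
\]
Setting $\mu := w^{-1}\lambda \in \Z^n$, these conditions read $\mu_1 \geq \mu_2 \geq \cdots \geq \mu_n \geq 0$ (from the finite simple roots $e_i - e_{i+1}$ and $e_n$) and $\mu_1 + \mu_2 \leq 1$ (from $-e_1 - e_2 + 1$), whose only integer solutions are $\mu = 0$ and $\mu = e_1$.

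In the case $\mu = 0$, $\lambda = 0$ and $n \in T_0 \cdot (H_0 \cap N_H(F)) \subset H_0$. In the case $\mu = e_1$, $\lambda = we_1$, and an explicit matrix computation yields $\varphi'_1 = \dot{s}_{e_1} \cdot \varpi^{e_1}$, where $s_{e_1}$ is the Weyl reflection negating $e_1$; this identifies $\tilde w = (we_1, w) \in \widetilde{W}$ with the coset $H_0 \varphi'_1$. The main obstacle is precisely this last identification, which requires pinning down the Weyl and translation components of $\varphi'_1$ in $\widetilde{W}$ via direct matrix calculation; once this is done, the conclusion follows from the fact that $\varphi'_1$ normalizes $T_0$, so any translation-part correction from $T_0$ can be absorbed into the $H_0$-factor on the left of $\varphi'_1$.
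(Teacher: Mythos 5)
Your proposal has a genuine gap at the key step. You claim a ``unique factorization $H_0 = T_0 \cdot \prod_{a \in \Phi^+} U_a(\mcO) \cdot \prod_{a \in \Phi^-} U_a(\mcO)$'' and use it to conclude that $nh'n^{-1} \in H_0$ forces each conjugated root-group factor to have nonnegative valuation. This is false: the right-hand side is a proper subset of $H_0$ (it is even a proper subset of $H_0 \cap \bigl(\text{big cell}\bigr)$). For example, in $\SL_2$ the element $\begin{pmatrix}\varpi & 1 \\ -1 & 0\end{pmatrix}$ lies in $\SL_2(\mcO)$ and in the big cell, yet its decomposition $\begin{pmatrix}1 & 0 \\ -\varpi^{-1} & 1\end{pmatrix}\begin{pmatrix}\varpi & 0 \\ 0 & \varpi^{-1}\end{pmatrix}\begin{pmatrix}1 & \varpi^{-1} \\ 0 & 1\end{pmatrix}$ has factors with negative valuation. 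So ``uniqueness of the $H_0$-factorization'' does not force $r_a + \langle wa,\lambda\rangle \geq 0$; membership of a product in a maximal compact gives no a priori bound on the valuations of the individual big-cell factors, in contrast with the Iwahori case used in Lemma~\ref{lem:key}, where the product map $\prod U_{a+r_a} \times T_0 \times \prod U_{a+r_a} \to I$ really is a bijection.

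The paper sidesteps this entirely by using a different double-coset decomposition: via \cite[Proposition 8]{MR2435422}, $H_0\backslash H(F)/I_H^+$ is represented by the purely translational set $T(\varpi^{\Z}) = \{\diag(\varpi^{r_1},\ldots,\varpi^{r_n},1,\varpi^{-r_n},\ldots,\varpi^{-r_1})\}$, so one may assume $y$ is diagonal. Then $(yhy^{-1})_{ij} = (t_i/t_j)\,h_{ij}$ is a direct entry-wise scaling, and the requirement that the entries in positions $(1,2),\ldots,(n,n+1),(2n,1)$ lie in $\mcO$ immediately yields $r_1 \geq \cdots \geq r_n \geq 0$ and $r_1 + r_2 \leq 1$, giving the two cases $(0,\ldots,0)$ and $(1,0,\ldots,0)$. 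No claim about factoring $H_0$ is needed. Your constraints on $\mu = w^{-1}\lambda$ happen to coincide with these, and the $\SL_2$ sanity check passes; but as written the argument relies on a false lemma and does not establish them. If you wish to keep your framing through $N(F)$ with a nontrivial Weyl part, you would need an independent argument that $nh'n^{-1} \in H_0$ constrains the affine depths of the individual $U_{wa}$-components (perhaps exploiting that $h'$ is pro-unipotent/affine generic, which excludes pathological big-cell decompositions like the $\SL_2$ example), and you would still need the computation identifying the class of $\varphi'_1$ in $\widetilde W$. The paper's reduction to diagonal representatives eliminates both burdens at once.
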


\begin{proof}
Let $y \in H$ satisfying $yhy^{-1} \in H_0$ for an affine generic element $h \in I_H^+$.
As affine genericity is preserved by $I_H^+$-conjugation, any element of $H_0yI_H^+$ satisfies the same condition as $y$.
It suffices to show $H_0yI_H^+ \subseteq H_0\sqcup H_0\varphi'_{1}$.

The isomorphism in Proposition \ref{prop:I^+W} induces an isomorphism
\[
H_0\backslash H/I_H^+ \cong \bigl(\bigl(N_{T}\cap H_0\bigr)/(T_{H})_1\bigr)\big\backslash\bigl(N_{T}/(T_{H})_1\bigr)
\]
(see \cite[Proposition 8]{MR2435422}) 
and the set
\[
T_{H}\bigl(\varpi^\Z\bigr):=\bigl\{\diag(\varpi^{r_1}, \ldots, \varpi^{r_n}, 1, \varpi^{-r_n}, \ldots, \varpi^{-r_1}) \mid r_1, \ldots, r_n\in\Z \bigr\}
\]
contains a set of representatives of the right-hand side. 
Hence we may assume 
\begin{align*}
y&=\diag(t_1, \ldots, t_{2n+1})\\
&=\diag(\varpi^{r_1}, \ldots, \varpi^{r_n}, 1, \varpi^{-r_n}, \ldots, \varpi^{-r_1}) \in T_{H}\bigl(\varpi^\Z\bigr).
\end{align*}
Since $(yhy^{-1})_{ij}=t_i/t_j\cdot h_{ij}$ and $yhy^{-1}\in H_{0}$, we have the following inequalities:
\begin{align*}
r_1-r_2 &\geq0, \\
&\vdotswithin{=}\\
r_{n-1}-r_n &\geq0,\\
r_n &\geq0 \text{, and}\\
-r_1-r_2 &\geq-1.
\end{align*}
Therefore $(r_1, \ldots, r_n)$ is either $(0, \ldots, 0)$ or $(1, 0, \ldots, 0)$.

In the former case, we have $H_0yI_H^+=H_0$.

In the latter case, we have 
\[
H_0yI_H^+ = H_0\varphi'_{1}I_H^+ = H_0I_H^+\varphi'_{1} =H_0\varphi'_{1}
\]
(recall that $\varphi'_{1}$ normalizes $I_H^+$).
This completes the proof.
\end{proof}

\begin{lem}\label{lem:maxparah}
Let $\beta\in\Pi_{\mathbf{H}}$ be a simple affine root of $\mathbf{T}_{\mathbf{H}}$ in $\mathbf{H}$.
Then the subgroup $\lan I_H^{++}, U_\beta\ran$ of $H$ contains $H_\bold{x}^+$ for some point $\bold{x}$ of the apartment $\mathcal{A}(\mathbf{H}, \mathbf{T}_\mathbf{H})$, where $H_\bold{x}^+$ is the pro-unipotent radical of the parahoric subgroup of $H$ associated to $\bold{x}$.
\end{lem}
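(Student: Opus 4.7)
The plan is to construct an explicit vertex $\mathbf{x}$ of the closure $\overline{C}$ of the fixed alcove and then verify the inclusion $H_\mathbf{x}^+ \subset \langle I_H^{++}, U_\beta \rangle$ by a case analysis on affine roots. I would take $\mathbf{x}$ to be the unique vertex of $\overline{C}$ opposite to the wall cut out by $\beta$---equivalently, the point characterized by $\beta'(\mathbf{x}) = 0$ for every $\beta' \in \Pi \setminus \{\beta\}$, so that $\beta(\mathbf{x}) > 0$ automatically since $\beta$ is positive on $C$. By standard Bruhat-Tits theory (cf.\ \cite[6.4.9]{MR0327923}), the pro-unipotent radical $H_\mathbf{x}^+$ is then generated by $T_1$ together with the affine root subgroups $U_\alpha$ for those $\alpha \in \Psi$ satisfying $\alpha(\mathbf{x}) > 0$.

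Next, I would write each affine root $\alpha$ uniquely as $\alpha = \sum_{\beta' \in \Pi} c_{\beta'}(\alpha) \beta'$, where $c_{\beta'}(\alpha) \in \Z$ and the positivity $\alpha \in \Psi^+$ is equivalent to $c_{\beta'}(\alpha) \geq 0$ for every $\beta'$. Evaluating at $\mathbf{x}$ then yields $\alpha(\mathbf{x}) = c_\beta(\alpha)\cdot \beta(\mathbf{x})$, so the condition $\alpha(\mathbf{x}) > 0$ is equivalent to $c_\beta(\alpha) > 0$, and in particular forces $\alpha \in \Psi^+$. From here the argument splits into two cases: if $\alpha \in \Pi$ is itself a simple affine root, then $c_\beta(\alpha) = \delta_{\alpha, \beta}$, so $c_\beta(\alpha) > 0$ forces $\alpha = \beta$ and $U_\alpha = U_\beta \subset \langle I_H^{++}, U_\beta \rangle$; otherwise $\alpha \in \Psi^+ \setminus \Pi$, and then $U_\alpha \subset I_H^{++}$ directly from the definition of $I_H^{++}$. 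Since also $T_1 \subset I_H^{++}$, every generator of $H_\mathbf{x}^+$ lies in $\langle I_H^{++}, U_\beta \rangle$, proving the required inclusion.

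The only real subtlety to watch for will be matching conventions between the Moy-Prasad description of $H_\mathbf{x}^+$ and the explicit affine-root generators of $I_H^{+}$ and $I_H^{++}$ as used in this paper; in particular one should record the elementary observation that $\mathbf{x} \in \overline{C}$ forces $\alpha(\mathbf{x}) \leq 0$ for every $\alpha \in \Psi^-$, so no negative affine root ever contributes to the list of generators of $H_\mathbf{x}^+$ that must be accounted for. Once that is noted, the argument is essentially combinatorial on the affine Dynkin diagram of $\SO_{2n+1}$ and poses no further difficulty.
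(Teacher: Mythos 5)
Your argument is correct, and it streamlines the paper's verification. You choose the same point $\mathbf{x}$ as the paper (the vertex of $\overline{C}$ opposite the wall $\{\beta=0\}$; note that for $\beta=\alpha_0$ this is the hyperspecial vertex $0$), but you verify the inclusion $H_\mathbf{x}^+ \subset \langle I_H^{++}, U_\beta\rangle$ abstractly: expanding each affine root in the basis $\Pi$ (which is legitimate since the $n+1$ affine simple roots are linearly independent as affine functions on the $n$-dimensional apartment, and the coefficients are integers of a uniform sign), and then reading off that $\alpha(\mathbf{x}) > 0 \Leftrightarrow c_\beta(\alpha) > 0 \Rightarrow \alpha \in \Psi^+$, after which the dichotomy $\alpha = \beta$ versus $\alpha \in \Psi^+\setminus\Pi$ is immediate. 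The paper instead checks the condition $(\ast)$ by explicit coordinate computation of $a(\mathbf{x})$ for $a$ in $\Phi^+\setminus\Delta$, $\Delta$, $\Phi^-\setminus\{-e_1-e_2\}$, and $\{-e_1-e_2\}$, and handles $\beta = e_1-e_2$ by conjugating to $\beta = -e_1-e_2+1$ via $\varphi'_1$ rather than directly. Your argument avoids both the conjugation trick and the case-by-case coordinate computations, and moreover is visibly insensitive to the particular type $B_n$ — it works for any split group with a fixed alcove — so it is the cleaner route. The one thing worth making explicit is why $\beta(\mathbf{x}) > 0$ rather than $=0$: since $\sum_{\beta'\in\Pi} n_{\beta'}\beta' = 1$ with all marks $n_{\beta'}>0$, the affine simple roots cannot all vanish at a single point, so $\beta(\mathbf{x})\neq 0$ and positivity follows from $\mathbf{x}\in\overline{C}$.
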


\begin{proof}
We recall that the pro-unipotent radical of the parahoric subgroup associated to a point $\bold{x}\in\mathcal{A}(\mathbf{H}, \mathbf{T}_{\mathbf{H}})$ and $I_{H}^{++}$ are given by
\begin{align*}
H_\bold{x}^+ &= \lan T_1, U_\alpha \mid \alpha\in\Psi_{\mathbf{H}}, \,\alpha(\bold{x})>0\ran, \text{ and}\\
I_{H}^{++} &= \lan T_1, U_\alpha \mid \alpha\in\Psi_{\mathbf{H}}^{+}\setminus\Pi_{\mathbf{H}}\ran,
\end{align*}
respectively.
Therefore we have to find a point $\bold{x}$ satisfying the following condition:
\[
\text{If an affine root $\alpha$ satisfies $\alpha(\bold{x})>0$, then we have $\alpha\in(\Psi_{\mathbf{H}}^{+}\setminus\Pi_{\mathbf{H}})\cup\{\beta\}$.} 
\tag{$\ast$}
\]

We first consider the case where $\beta=-e_1-e_2+1$.
In this case, we take $\bold{x}=0$.
If $\alpha=a+r\in\Psi_{\mathbf{H}}$ satisfies $\alpha(\bold{x})>0$, then we have $r>0$.
Hence $\alpha\in\Psi_{\mathbf{H}}^{+}\setminus\Pi_{\mathbf{H}}$ or $\alpha=\beta$.

We next consider the case where $\beta=e_{1}-e_{2}$.
However, since $\varphi'_{1}U_{e_{1}-e_{2}}\varphi'^{-1}_{1}=U_{-e_{1}-e_{2}+1}$, this case is reduced to the previous case.

We finally consider the other cases.
We define a point $\bold{x}\in\mathcal{A}(\mathbf{H}, \mathbf{T}_\mathbf{H})$ as follows:
\[
\bold{x} := 
\begin{cases}
 (\check{e}_1+\cdots+\check{e}_i)/2 & (\beta=e_i-e_{i+1} \text{ for } 1<i<n),\\
 (\check{e}_1+\cdots+\check{e}_n)/2 & (\beta=e_n).
\end{cases}
\]
It is enough to prove $(\ast)$ for $\alpha=a+r_{a}\in\Psi_{\mathbf{H}}$, where $r_a\in\Z$ is the smallest integer satisfying $a(\bold{x})+r_a>0$.

\begin{itemize}
\item
If $a \in \Phi_{\mathbf{H}}^{+}\setminus\Delta_{\mathbf{H}}$, then we have $0\leq a(\bold{x})\leq1$.
Hence $\alpha(\bold{x})>0$ implies that $r_a\geq0$.
Therefore $\alpha=a+r_a$ belongs to $\Psi_{\mathbf{H}}^{+}\setminus\Pi_{\mathbf{H}}$.

\item
If $a \in\Delta_{\mathbf{H}}$, then we have 
\[
a(\bold{x})=
\begin{cases}
 0 & (a\neq\beta),\\
 1/2 & (a=\beta).
\end{cases}
\]
Hence we have
\[
\alpha(\bold{x})>0 \Longleftrightarrow 
\begin{cases}
 r_a>0 & (a\neq\beta),\\
 r_a\geq0 & (a=\beta) .
\end{cases}
\]
Therefore we have either $\alpha\in\Psi_{\mathbf{H}}^{+}\setminus\Pi_{\mathbf{H}}$ or $\alpha=\beta$.

\item
If $a \in \Phi_{\mathbf{H}}^{-}\setminus\{-e_{1}-e_{2}\}$, then we have $-1\leq a(\bold{x})\leq0$.
Hence $\alpha(\bold{x})>0$ implies that $r_a\geq1$.
Therefore $\alpha=a+r_a$ belongs to $\Psi_{\mathbf{H}}^{+}\setminus\Pi_{\mathbf{H}}$.

\item
If $a=-e_{1}-e_{2}$, then we have $\alpha(\bold{x})=r_a-1$.
Hence $\alpha(\bold{x})>0$ implies that $r_a\geq2$, and we have $\alpha\in\Psi_{\mathbf{H}}^{+}\setminus\Pi_{\mathbf{H}}$.

\end{itemize}
\end{proof}

\begin{prop}\label{prop:simplity}
Every character $\eta$ of $I_H^+$ which is contained in $\pi_H^{I_H^{++}}$ is affine generic.
In particular, the representation $\pi_H$ is simple supercuspidal.
\end{prop}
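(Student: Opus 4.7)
The plan is to argue by contradiction. Suppose $\eta$ is not affine generic, so there exists a simple affine root $\beta\in\Pi$ such that $\eta|_{U_\beta/U_{\beta+1}}$ is trivial. Because $\eta$ factors through $I_H^+/I_H^{++}$ and $U_{\beta+1}\subset I_H^{++}$, the character $\eta$ is in fact trivial on $\langle I_H^{++}, U_\beta\rangle$. By Lemma \ref{lem:maxparah}, this subgroup contains the pro-unipotent radical $H_\bold{x}^+$ of the parahoric associated to some point $\bold{x}\in\mathcal{A}(H,T_H)$. Hence the $\eta$-isotypic vector in $\pi_H^{I_H^{++}}$ is fixed by $H_\bold{x}^+$, and so $\pi_H$ has Moy--Prasad depth zero.

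Combined with the supercuspidality from Proposition \ref{prop:supercuspidality}, the classification of depth-zero supercuspidals forces $\bold{x}$ to be a vertex and realizes $\pi_H\cong\cInd_{N_{H(F)}(K^{\circ})}^{H(F)}\tilde{\sigma}$, where $K^{\circ}$ is the maximal parahoric at $\bold{x}$ and $\tilde{\sigma}|_{K^{\circ}}$ is the inflation of an irreducible cuspidal representation $\sigma$ of the finite reductive quotient $K^{\circ}/H_\bold{x}^+$. For $H=\SO_{2n+1}$, up to conjugacy there are only two possibilities for $\bold{x}$: the hyperspecial vertex, for which $K^{\circ}=H_0=\SO_{2n+1}(\mcO)$ with reductive quotient $\SO_{2n+1}(k)$; and one non-hyperspecial vertex, whose reductive quotient is (isogenous to) $\SO_{2n}(k)$.

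The contradiction is then obtained by evaluating $\Theta_{\pi_H}$ at well-chosen affine generic test elements through the character formula (Theorem \ref{thm:CF}) and comparing with the Kloosterman-sum values in Corollaries \ref{cor:CR} and \ref{cor:CR'}. In the hyperspecial case, take $h=N'(1+\varphi_u)$ for $u\in k^{\times}$ with $\Kl^{n+1}_{2^{2(n-1)}u}(\psi;1,2,\ldots,2,1)\neq 0$ (such $u$ exists by Corollary \ref{cor:Kl}). By Lemma \ref{lem:hyperspecial}, the character sum collapses to at most two terms, indexed by $K^{\circ}$ and $K^{\circ}\varphi'_1$. In each, the relevant element reduces modulo $\mfp$ to a nontrivial regular unipotent of $\SO_{2n+1}(k)$, and since every cuspidal representation of a finite reductive group vanishes on nontrivial unipotent elements, one obtains $\Theta_{\pi_H}(h)=0$, contradicting Corollary \ref{cor:CR}. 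For the non-hyperspecial vertex, one runs the analogous argument using $\varphi'_{u^2/2}N'(1+\varphi_{u^2})$ and Corollary \ref{cor:CR'}: establish a conjugation lemma like Lemma \ref{lem:hyperspecial} adapted to this parahoric, and verify that the relevant conjugates reduce into the unipotent radical of a proper parabolic of the corresponding reductive quotient so that the cuspidal trace vanishes. Once both cases are ruled out, $\eta$ must be affine generic, and Lemma \ref{lem:Mackey} then identifies $\pi_H$ with a simple supercuspidal representation.

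The main obstacle is the non-hyperspecial vertex case: one needs a clean analog of Lemma \ref{lem:hyperspecial} for that maximal parahoric, an explicit identification of its reductive quotient, and a verification that the reduction of $\varphi'_{u^2/2}N'(1+\varphi_{u^2})$ (together with every conjugate contributing to the character sum) lies in the locus where any cuspidal representation vanishes. The hyperspecial case is comparatively straightforward thanks to Lemma \ref{lem:hyperspecial} and the regular-unipotent observation; packaging the non-hyperspecial side uniformly is what requires the most care.
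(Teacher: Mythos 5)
Your proposal shares the opening moves with the paper (non-genericity of $\eta$ forces a nonzero $H_{\bold{x}}^+$-fixed vector via Lemma \ref{lem:maxparah}, hence depth zero), but it diverges at two points, and both divergences introduce genuine gaps.

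First, you skip the genericity of $\pi_H$. The paper observes that since $\phi_H$ is a bounded parameter, Proposition 8.3.2 of \cite{MR3135650} gives a generic member of the $L$-packet, which must be $\pi_H$ itself because the packet is a singleton. Genericity is then exactly what feeds into Lemma 6.1.2 of \cite{MR2480618} to conclude that $\pi_H$ is compactly induced from a \emph{hyperspecial} maximal parahoric, not merely from some maximal parahoric. Without this input you are correct that one must also rule out the non-hyperspecial vertex of $\SO_{2n+1}$ with reductive quotient isogenous to $\mathrm{SO}_{2n}(k)\times\mathrm{SO}_1(k)$ (or similar), and you yourself flag this as the hard, unfinished part. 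That is not a minor caveat: it is a missing half of the case analysis. The paper's use of genericity is precisely what makes the argument close.

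Second, even in the hyperspecial case your contradiction rests on the claim that every cuspidal representation of a finite reductive group vanishes on nontrivial unipotent elements. That claim is false. Already for $\mathrm{GL}_2(\mathbb{F}_q)$ (and hence $\mathrm{PGL}_2(\mathbb{F}_q)\cong\mathrm{SO}_3(\mathbb{F}_q)$) the cuspidal characters take the value $-1$ on regular unipotent elements; cuspidality says the average over the unipotent radical of a proper parabolic vanishes, not that individual unipotent values vanish. So you cannot conclude $\Theta_{\pi_H}(N'(1+\varphi_u))=0$ this way. The paper's actual mechanism is different and robust: after Lemma \ref{lem:hyperspecial} reduces the character sum to two terms and conjugation collapses it to $2\tr\bigl(\dot\rho(N'(1+\varphi_u))\bigr)$, one observes that the image of $N'(1+\varphi_u)$ in $H_0/H_0^+$ is \emph{independent of $u$} (the $\varpi u$ entries die mod $\mathfrak{p}$), so $\Theta_{\pi_H}(N'(1+\varphi_u))$ would have to be constant in $u$; this contradicts Corollary \ref{cor:CR} plus the non-constancy of the Kloosterman sum (Corollary \ref{cor:Kl}), with no need for any vanishing of the finite-group character. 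I'd suggest replacing your "cuspidals vanish on unipotents" step with this constancy-in-$u$ argument, and adding the genericity input to kill the non-hyperspecial case, at which point your proof essentially becomes the paper's.
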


\begin{proof}
We suppose that $\eta$ is not affine generic.

Since $\eta$ is not affine generic, there exists a simple affine root $\beta \in \Pi_{\mathbf{H}}$ such that its affine root subgroup $U_\beta$ is contained in $\mathrm{Ker}(\eta)$.
Hence $\pi_H$ has a nonzero $\lan I_H^{++}, U_\beta\ran$-fixed vector.
Then, by Lemma \ref{lem:maxparah}, $\pi_{H}$ also has a nonzero $H_\bold{x}^+$-fixed vector for some point $\bold{x}\in\mathcal{A}(\mathbf{H}, \mathbf{T}_\mathbf{H})$, where $H_\bold{x}^+$ is the pro-unipotent radical of the parahoric subgroup associated to $\bold{x}$.
Therefore the depth of $\pi_H$ is zero.

On the other hand, $\pi_H$ is generic with respect to any Whittaker datum.
Indeed, by Proposition 8.3.2 in \cite{MR3135650}, the $L$-packet $\Pi_{\phi_{H}}$ of $\phi_H$ contains a generic representation.
However $\Pi_{\phi_{H}}$ is a singleton consisting of $\pi_{H}$.
Thus $\pi_{H}$ is generic.

Then, by Lemma 6.1.2 in \cite{MR2480618}, the generic depth-zero supercuspidal representation $\pi_H$ can be obtained by the compact induction of a representation of the reduction of a hyperspecial subgroup of $H$.
Since $\mathbf{H}=\SO_{2n+1}$ is an adjoint group, all hyperspecial subgroups of $H$ are conjugate (see 2.5 in \cite{MR546588}).
Thus we may assume that the hyperspecial subgroup is $H_0=\SO_{2n+1}(\mcO)$.

Let $H_0^+$ be the pro-unipotent radical of $H_0$.
Let $\pi_H \cong \cInd_{H_0}^{H} \dot{\rho}$, where $\dot{\rho}$ is the inflation of a representation $\rho$ of $H_0/H_0^+\cong\SO_{2n+1}(k)$ to $H_0$.
Then, by the character formula (Theorem \ref{thm:CF}), we have
\[
\Theta_{\pi_H}\bigl(\mathfrak{N}(1+\varphi_u)\bigr) = \sum_{\begin{subarray}{c} y\in H_0 \backslash H\\ y\mathfrak{N}(1+\varphi_u)y^{-1} \in H_0 \end{subarray}} \tr \bigl(\dot{\rho}\bigl(y\mathfrak{N}(1+\varphi_u)y^{-1}\bigr)\bigr)
\]
for any $u \in k^{\times}$.
By Lemma \ref{lem:hyperspecial}, we have
\[
\Theta_{\pi_H}\bigl(\mathfrak{N}(1+\varphi_u)\bigr)=\tr \bigl(\dot{\rho}\bigl(\mathfrak{N}(1+\varphi_u)\bigr)\bigr)+\tr \bigl(\dot{\rho}\bigl(\varphi'_{1}\mathfrak{N}(1+\varphi_u)\varphi'^{-1}_{1}\bigr)\bigr).
\]
As 
\[
t\varphi'_{1}\mathfrak{N}(1+\varphi_u)\varphi'^{-1}_{1}t^{-1} = \mathfrak{N}(1+\varphi_u),
\]
where $t=\diag(2/u, 1, \ldots, 1, u/2) \in H_{0}$, 
we have
\[
\Theta_{\pi_H}\bigl(\mathfrak{N}(1+\varphi_u)\bigr)=2\tr\bigl(\dot{\rho}\bigl(\mathfrak{N}(1+\varphi_u)\bigr)\bigr).
\]
The image of $\mathfrak{N}(1+\varphi_u)$ in $H_0/H_0^+$ is independent of $u$, hence we can conclude that $\Theta_{\pi_H}(\mathfrak{N}(1+\varphi_u))$ is independent of $u$.
However we have
\[
\Theta_{\pi_H}\bigl(\mathfrak{N}(1+\varphi_u)\bigr)=\Kl_{2^{2(n-1)}u}^{n+1}(\psi; 1, 2, \ldots, 2, 1)
\]
by Corollary \ref{cor:CR}, 
and this is not constant on $u$ by Corollary \ref{cor:Kl}.
This is a contradiction.
\end{proof}

\subsection{Main theorem}
\begin{prop}\label{prop:main}
Let $a \in k^{\times}, \zeta \in \{\pm1\}$.
Then the representation $\pi_H$ associated to $\pi_{a,\zeta}$ is given by $\pi'_{a/2, \zeta}$.
\end{prop}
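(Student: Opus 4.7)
The plan is to first observe that the arguments of Sections 5.2--5.4, which showed that the associated representation $\pi_H$ of $\pi = \pi_{1, \zeta}$ is simple supercuspidal, go through with only notational changes for any $\pi = \pi_{a, \zeta}$ with $a \in k^{\times}$, $\zeta \in \{\pm 1\}$. Concretely, one replaces $\varphi = \varphi_1$ by $\varphi_{a^{-1}}$ throughout, notes that the element $1 + \varphi_{au}$ still satisfies the conclusion of Proposition \ref{prop:N} (with $N'(1+\varphi_{au}) \in I_H^+$ an affine generic norm), and re-runs the computations of Lemma \ref{lem:charlift}, Corollary \ref{cor:depthbound}, and Proposition \ref{prop:simplity} verbatim. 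Thus $\pi_H$ is simple supercuspidal, so $\pi_H \cong \pi'_{b, \xi}$ for some $(b, \xi) \in k^{\times} \times \{\pm 1\}$, and it remains to show $(b, \xi) = (a/2, \zeta)$.

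To pin down $b$, I would compute $\Theta_{\pi_H}(N'(1+\varphi_{au}))$ for varying $u \in k^{\times}$ in two different ways. On the one hand, the generalization of Corollary \ref{cor:CR} to arbitrary $a$, combined with the twisted character formula of Proposition \ref{prop:charTGL}, expresses this character value up to the constant $c$ as a Kloosterman sum of the form $\Kl^{n+1}_{C(a) u}(\psi; 1, 2, \ldots, 2, 1)$, where $C(a)$ is an explicit nonzero element of $k^{\times}$ depending only on $a$. On the other hand, since $\pi_H \cong \pi'_{b, \xi}$, Proposition \ref{prop:charSO} gives the same value as a Kloosterman sum of identical shape whose index is the product of the affine simple components of $N'(1+\varphi_{au})$ (which one reads off as in Lemma \ref{lem:partialdiag}) multiplied by $b$. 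Equating these two families for all $u \in k^{\times}$ and invoking the Fourier-theoretic uniqueness result for Kloosterman sums from Appendix~A (Corollary \ref{cor:Kl}) then forces $b = a/2$ and $c = 1$.

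Having fixed $b = a/2$, I would determine $\xi$ by an analogous comparison at $\varphi'_{(a/2)^{-1}u^2} N'(1+\varphi_{au^2})$. The generalization of Proposition \ref{prop:N'} shows this is a norm of the strongly $\theta$-regular element $\varphi_{au^2}(1+\varphi_{au^2}) \in G^{\strs}(F)$. The corresponding generalization of Corollary \ref{cor:CR'} yields $\Theta_{\pi_H}$ at this element as $\zeta$ times a sum $\Kl^n_{Du}(\psi) + \Kl^n_{-Du}(\psi)$ for an explicit $D = D(a) \in k^{\times}$, while Proposition \ref{prop:charSO'} (with $b = a/2$) expresses $\Theta_{\pi'_{a/2, \xi}}$ at the same element as $\xi$ times the very same sum. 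Since that sum is nonzero for suitable $u$ (again by Corollary \ref{cor:Kl}), we conclude $\xi = \zeta$.

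The main obstacle is the Fourier-type uniqueness argument which extracts $b$ from the family of Kloosterman-sum values; everything else is bookkeeping from the endoscopic character relation (Corollary \ref{cor:ECR}) and the explicit character formulas of Section 3. This uniqueness is precisely the content of the Kloosterman sum estimates assembled in Appendix~A, and using them one also obtains as a byproduct that the normalization constant $c$ between the Whittaker datum and the fixed isomorphism $A = \mathrm{id}$ equals $1$, completing the proof.
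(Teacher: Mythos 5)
Your overall strategy is the same as the paper's: evaluate the characters of $\pi_H$ and of $\pi$ at the specific test elements $N'(1+\varphi_{u})$ and $\varphi'_{u^2/2}N'(1+\varphi_{u^2})$, compare via the endoscopic character relation (Corollary \ref{cor:ECR}), and extract $b$ and $\xi$ from the resulting Kloosterman-sum identities. One meaningful stylistic difference: the paper avoids your re-run of Sections 5.2--5.4 for general $a$ by simply replacing the fixed uniformizer $\varpi$ to reduce to $a=1$, which is quicker and keeps the earlier lemmas verbatim. More importantly, your Appendix~A citations need repair: the ``Fourier-theoretic uniqueness'' you invoke to force $b=a/2$ and $c=1$ is Proposition \ref{prop:Fourier}, not Corollary \ref{cor:Kl} (which only gives non-constancy and non-vanishing of a single Kloosterman sum, not that $\Kl_{ta}=c\,\Kl_{tb}$ for all $t$ implies $a=b$, $c=1$). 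Likewise, for the $\xi$-step, asserting that $\Kl^n_{Du}(\psi)+\Kl^n_{-Du}(\psi)$ is nonzero for some $u$ ``by Corollary \ref{cor:Kl}'' is not justified as stated, since the two terms could in principle cancel for every $u$ even if each is sometimes nonzero; the paper instead sums over all $u\in k^{\times}$ and uses Proposition \ref{prop:Kl} (parts (1) and (2)) to get the nonzero total $2(-1)^n$, which then forces $\xi=\zeta$. With those two citation fixes, your argument matches the paper's.
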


\begin{proof}
By replacing the fixed uniformizer $\varpi$, we may assume that $a=1$.
By Proposition \ref{prop:simplity}, $\pi_H$ is simple supercuspidal.
Let $\pi_H$ be $\pi'_{b, \xi}$, where $b \in k^{\times}$ and $\xi \in \{\pm1\}$.
Our task is to determine $b$ and $\xi$.

First, we consider $b$.
For $u \in k^{\times}$, we have
\[
\Theta_{\pi_H}\bigl(\mathfrak{N}(1+\varphi_u)\bigr)=\Theta_{\pi_{1,\zeta}, \theta}(1+\varphi_u)=\Kl_{2^{2(n-1)}u}^{n+1}(\psi; 1, 2, \ldots, 2, 1)
\]
by Corollary \ref{cor:CR} (1).
On the other hand, since $\mathfrak{N}(1+\varphi_u)\in I_H^+$ is an affine generic element with its simple affine components $(2, \ldots, 2, u)$, we have
\[
\Theta_{\pi_H}\bigl(\mathfrak{N}(1+\varphi_u)\bigr) = \Kl_{2^{2(n-1)+1}bu}^{n+1}(\psi; 1, 2, \ldots, 2, 1)
\]
by Proposition \ref{prop:charSO}.
Hence $b=1/2$ by Proposition \ref{prop:Fourier}.

Next, we consider $\xi$.
For $u \in k^{\times}$, we have
\[
\Theta_{\pi_H}\bigl(\varphi'_{u^2/2}\mathfrak{N}(1+\varphi_{u^2})\bigr)=\Theta_{\pi_{1,\zeta}, \theta}\bigl(\varphi_{u^2}(1+\varphi_{u^2})\bigr)=\zeta\left(\Kl_{2^nu}^n(\psi)+\Kl_{-2^nu}^n(\psi)\right)
\]
by Corollary \ref{cor:CR} (2).
On the other hand, $\varphi'_{u^2/2}\mathfrak{N}(1+\varphi_{u^2})$ is strongly regular semisimple (by Proposition \ref{prop:N} (2)) and  the simple affine components of $(\varphi'_{u^2/2}\mathfrak{N}(1+\varphi_{u^2}))^2 \in I_H^+$ are given by $(4, \ldots, 4, 2u^2)$.
Hence $\varphi'_{u^2/2}\mathfrak{N}(1+\varphi_{u^2})$ satisfies the assumption of Proposition $\ref{prop:charSO'}$, and we have
\[
\Theta_{\pi_H}\bigl(\varphi'_{u^2/2}\mathfrak{N}(1+\varphi_{u^2})\bigr) = \xi\left(\Kl_{2^nu}^n(\psi)+\Kl_{-2^nu}^n(\psi)\right).
\]
By summing up over $u \in k^{\times}$, we have 
\[
2\zeta\sum_{u \in k^{\times}} \Kl_u^n(\psi)=2\xi\sum_{u \in k^{\times}} \Kl_u^n(\psi).
\]
Since $\sum_{u \in k^{\times}} \Kl_u^n(\psi)=(-1)^n\neq0$ by Proposition \ref{prop:Kl}, we have $\xi=\zeta$.
\end{proof}

In summary, we get the following result.

\begin{thm}[Main theorem]\label{thm:main}
Let $b \in k^{\times}$ and $\xi \in \{\pm1\}$.
Under the parametrizations as in Sections $2.3$ and $2.4$, we have the following:
\begin{enumerate}
\item
The $L$-packet containing the simple supercuspidal representation $\pi'_{b, \xi}$ of $\SO_{2n+1}(F)$ is a singleton.
In particular, the character of $\pi'_{b, \xi}$ is stable.
\item
The lifting of the simple supercuspidal representation $\pi'_{b, \xi}$ of $\SO_{2n+1}(F)$ to $\GL_{2n}(F)$ is again simple supercuspidal, and given by $\pi_{2b, \xi}$.
\end{enumerate}
\end{thm}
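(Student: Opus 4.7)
The plan is to deduce the main theorem as an almost immediate consequence of Proposition~\ref{prop:main}, inverting the direction of correspondence. Given $(b,\xi)\in k^\times\times\{\pm 1\}$, my candidate for the lifting is the self-dual simple supercuspidal representation $\pi:=\pi_{2b,\xi}$ of $\GL_{2n}(F)$ with trivial central character, obtained by setting $a=2b$ and $\zeta=\xi$ in the parametrization of Section~2.3. By Proposition~\ref{prop:Mieda}, the $L$-parameter $\phi$ of $\pi$ factors through ${}^L\!\SO_{2n+1}$ via $\iota$, so it determines an $L$-parameter $\phi_H$ of $H=\SO_{2n+1}$, and the lifting of $\Pi_{\phi_H}$ to $\GL_{2n}(F)$ is $\pi_{2b,\xi}$ by definition.

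For part (1), I would argue as in the paragraph following Proposition~\ref{prop:Mieda}. Since $\pi=\pi_{2b,\xi}$ is supercuspidal, the $L$-parameter $\phi\colon W_F\to\GL_{2n}(\C)$ is irreducible as a representation of $W_F$. Consequently $\Cent_{\widehat{G}}(\mathrm{Im}\,\phi)$ consists of scalars, and the same holds for $\Cent_{\widehat{H}}(\mathrm{Im}\,\phi_H)$. Hence $\mathcal{S}_{\phi_H}$ is trivial and Theorem~\ref{thm:Arthur} implies that the associated packet $\widetilde{\Pi}_{\phi_H}$ is a singleton, say $\{\pi_H\}$. Moreover Theorem~\ref{thm:Arthur} writes the twisted character of $\pi$ as a stable distribution on $H(F)$, which, for a singleton packet, forces the character of $\pi_H$ to be stable.

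For part (2), I would apply Proposition~\ref{prop:main} to $\pi=\pi_{2b,\xi}$: this identifies the unique element $\pi_H$ of $\widetilde{\Pi}_{\phi_H}$ as $\pi'_{(2b)/2,\,\xi}=\pi'_{b,\xi}$. Since the parametrizations of Sections~2.3 and~2.4 are both indexed by $k^\times\times\{\pm 1\}$, the assignment $(a,\zeta)\mapsto(a/2,\zeta)$ is a bijection, so every simple supercuspidal representation $\pi'_{b,\xi}$ is realized as $\pi_H$ for exactly one self-dual simple supercuspidal $\pi_{2b,\xi}$ with trivial central character. Combining this with the fact that $\widetilde{\Pi}_{\phi_H}=\{\pi'_{b,\xi}\}$ is a singleton, the lifting of $\pi'_{b,\xi}$ to $\GL_{2n}(F)$ is precisely $\pi_{2b,\xi}$, as desired.

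In this set-up there is essentially no obstacle beyond what has already been handled in the preceding sections: all of the analytic work (computing characters, verifying the norm correspondence, checking that the transfer factor is trivial, proving supercuspidality and simple-supercuspidality of $\pi_H$, and pinning down the parameters $(b,\xi)$ via Kloosterman-sum comparisons and their Fourier transform in Proposition~\ref{prop:Fourier}) is concentrated in Proposition~\ref{prop:main}. The only point worth double-checking is that the identification $\pi_H=\pi'_{b,\xi}$ uses both the Kloosterman-sum value at $N'(1+\varphi_u)$ (to determine $b=1/2$ when $a=1$, equivalently $b=a/2$ in general by rescaling the uniformizer) and the value at $\varphi'_{u^2/2}N'(1+\varphi_{u^2})$ (to determine $\xi=\zeta$); both are already established, so the main theorem follows.
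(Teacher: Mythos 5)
Your proposal is correct and follows essentially the same route as the paper: the paper presents Theorem~\ref{thm:main} as a summary of Proposition~\ref{prop:main} (read in the inverse direction) together with the observations from Section~5.1 that $\widetilde{\Pi}_{\phi_H}$ is a singleton and that its character is stable by Theorem~\ref{thm:Arthur}. Your explicit remark that $(a,\zeta)\mapsto(a/2,\zeta)$ is a bijection on $k^\times\times\{\pm1\}$, so that every $\pi'_{b,\xi}$ occurs as the $\pi_H$ for a unique self-dual $\pi_{2b,\xi}$, is the right way to make the inversion precise, and is implicit in the paper.
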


\begin{rem}
From this result, we know that the $L$-parameter of $\pi'_{b, \xi}$ is equal to that of $\pi_{2b, \xi}$.
On the other hand, $L$-parameters of simple supercuspidal representations of general linear groups have been determined explicitly by the works of \cite{MR2148193} and \cite{Imai:2015aa}.
Therefore we can get an explicit description of $L$-parameters of simple supercuspidal representations of $\SO_{2n+1}(F)$. 
\end{rem}

\appendix
\section{Gauss and Kloosterman sums}\label{sec:Kl}

\begin{defn}[Gauss sum]
Let $\chi \colon \F_q^{\times} \ra \C^{\times}$ be a multiplicative character and $\psi \colon \F_q \ra \C^{\times}$ a nontrivial additive character.
We define the Gauss sum with respect to the pair $(\chi, \psi)$ by
\[
G(\chi, \psi) := \sum_{t \in \F_q^{\times}} \chi(t)\psi(t).
\]
\end{defn}

The following properties are well-known (see e.g.\ \cite[page 501]{MR0029393}).

\begin{prop}\label{prop:Gauss}
\begin{enumerate}
 \item If $\chi$ is trivial, then $G(\chi, \psi)=-1$.
 \item If $\chi$ is nontrivial, then $|G(\chi, \psi)|=\sqrt{q}$.
\end{enumerate}
In particular, the sum $G(\chi, \psi)$ is nonzero.
\end{prop}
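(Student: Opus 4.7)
The plan is to handle the two parts directly from the definition, exploiting orthogonality of characters. Both facts are entirely classical, so the proof is a short direct computation rather than anything structural.

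For (1), when $\chi$ is trivial, I would rewrite
\[
G(\chi,\psi)=\sum_{t\in\F_q^{\times}}\psi(t)=\sum_{t\in\F_q}\psi(t)-\psi(0),
\]
and use the standard fact that the sum of a nontrivial additive character over the whole additive group $\F_q$ vanishes. Since $\psi(0)=1$, this yields $-1$.

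For (2), I would compute $|G(\chi,\psi)|^2$ by expanding $G(\chi,\psi)\overline{G(\chi,\psi)}$. Noting $\overline{\chi(t)}=\chi(t)^{-1}=\chi(t^{-1})$ and $\overline{\psi(t)}=\psi(-t)$, I get
\[
|G(\chi,\psi)|^2=\sum_{s,t\in\F_q^{\times}}\chi(ts^{-1})\psi(t-s),
\]
and then substitute $t=su$ to separate variables:
\[
|G(\chi,\psi)|^2=\sum_{u\in\F_q^{\times}}\chi(u)\sum_{s\in\F_q^{\times}}\psi\bigl(s(u-1)\bigr).
\]
The inner sum is $q-1$ when $u=1$ and $-1$ otherwise (again by the vanishing of $\psi$-sums over $\F_q$ when the argument is a nontrivial additive character). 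Thus
\[
|G(\chi,\psi)|^2=(q-1)-\sum_{u\neq1}\chi(u)=(q-1)-\bigl(-\chi(1)\bigr)=q,
\]
where the second equality uses orthogonality $\sum_{u\in\F_q^{\times}}\chi(u)=0$ for nontrivial $\chi$. This gives $|G(\chi,\psi)|=\sqrt{q}$.

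There is no real obstacle here; the only mild care needed is keeping track of the distinction between summing over $\F_q$ and over $\F_q^{\times}$ when applying character orthogonality, so that the boundary term $u=1$ (or $t=0$) is correctly isolated. The final clause that $G(\chi,\psi)\neq 0$ is immediate from (1) and (2).
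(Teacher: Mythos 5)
Your computation is correct and is the standard argument. The paper itself gives no proof of this proposition (it is stated as ``well-known''), so there is nothing to compare against; your direct computation of $|G(\chi,\psi)|^2$ by separating variables via $t=su$ and invoking orthogonality for both the additive and multiplicative characters is exactly the classical proof one would supply.
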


\begin{defn}[Kloosterman sum, \cite{MR955052}]\label{defn:Kl}
Let  $\psi \colon \F_q \ra \C^{\times}$ be a nontrivial additive character, $a$ an element of $\F_q$, and $b_1, \ldots, b_n$ positive integers.
We define the Kloosterman sum with respect to $(\psi, a, b_1, \ldots, b_n)$ by 
\[
\Kl_a^n(\psi; b_1, \ldots, b_n) := \sum_{\begin{subarray}{c} x_1, \ldots , x_n \in \F_q\\ x_1^{b_1} \cdots x_n^{b_n} = a \end{subarray}} \psi(x_1 + \cdots + x_n).
\] 
When $b_1 = \cdots = b_n =1$, we denote it simply by $\Kl_a^n(\psi)$.
\end{defn}

\begin{prop}\label{prop:Kl}
For a nontrivial additive character $\psi$, positive integers $(b_1, \ldots, b_n)$, and a multiplicative character $\chi \colon \F_q^{\times} \ra \C^{\times}$, the following properties hold:
\begin{enumerate}
 \item 
 \[
 \Kl_0^n(\psi; b_1, \ldots, b_n) = (-1)^{n-1},
 \]
 \item 
 \[
 \sum_{a \in \F_q} \Kl_a^n(\psi; b_1, \ldots, b_n) = 0 \text{, and}
 \]
 \item 
 \[
 \sum_{a \in \F_q^{\times}} \chi(a) \Kl_a^n(\psi; b_1, \ldots, b_n) = \prod_{i=1}^n G(\chi^{b_i}, \psi).
 \]
\end{enumerate}
\end{prop}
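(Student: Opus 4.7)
The proposition lists three elementary identities about Kloosterman sums, and all three follow from direct manipulation of the defining sum together with a standard orthogonality argument. Here is how I would organize the proof.

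For part (2), I would simply swap the order of summation and unpack the definition:
\[
\sum_{a \in \F_q} \Kl_a^n(\psi; b_1, \ldots, b_n) = \sum_{x_1, \ldots, x_n \in \F_q} \psi(x_1 + \cdots + x_n) = \prod_{i=1}^n \sum_{x_i \in \F_q} \psi(x_i) = 0,
\]
where the last equality uses that $\psi$ is a nontrivial additive character of $\F_q$. This is the easiest of the three.

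For part (3), the trick is that the constraint $x_1^{b_1}\cdots x_n^{b_n} = a \in \F_q^\times$ forces each $x_i$ to lie in $\F_q^\times$, so the sum can be unconstrained on $(\F_q^\times)^n$ after inserting $\chi$:
\[
\sum_{a \in \F_q^\times} \chi(a)\Kl_a^n(\psi; b_1, \ldots, b_n) = \sum_{x_1, \ldots, x_n \in \F_q^\times} \chi(x_1)^{b_1}\cdots\chi(x_n)^{b_n} \psi(x_1 + \cdots + x_n).
\]
This factors as $\prod_{i=1}^n \sum_{x_i \in \F_q^\times} \chi^{b_i}(x_i)\psi(x_i) = \prod_{i=1}^n G(\chi^{b_i}, \psi)$ by definition of the Gauss sum.

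Part (1) is the one requiring a bit more care. Here $\Kl_0^n(\psi; b_1, \ldots, b_n)$ is the sum of $\psi(x_1 + \cdots + x_n)$ over tuples for which $x_1^{b_1}\cdots x_n^{b_n} = 0$, i.e.\ at least one $x_i$ vanishes. My plan is to apply inclusion-exclusion on the subsets $T \subseteq \{1, \ldots, n\}$ of indices with $x_i = 0$: writing $A_i := \{(x_1, \ldots, x_n) \in \F_q^n \mid x_i = 0\}$, we get
\[
\Kl_0^n(\psi; b_1, \ldots, b_n) = \sum_{\emptyset \neq T \subseteq \{1, \ldots, n\}} (-1)^{|T|+1} \sum_{\substack{(x_1, \ldots, x_n)\in\F_q^n \\ x_i = 0\ \forall i \in T}} \psi(x_1 + \cdots + x_n).
\]
For each $T \subsetneq \{1, \ldots, n\}$, the inner sum factors over $j \notin T$ into $\prod_{j \notin T}\sum_{x_j \in \F_q} \psi(x_j) = 0$, so only $T = \{1, \ldots, n\}$ contributes, giving $(-1)^{n+1} \cdot 1 = (-1)^{n-1}$.

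No step looks like a real obstacle; the only subtlety is keeping track of which variables are forced to vanish in part (1), which the inclusion-exclusion dispenses with cleanly. The three identities are independent of each other, so each can be handled on its own by the computations sketched above.
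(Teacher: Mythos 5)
Your proof is correct and follows essentially the same route as the paper: parts (2) and (3) by unfolding the definition and factoring the sum (using nontriviality of $\psi$ resp.\ the definition of the Gauss sum), and part (1) by inclusion--exclusion over the set of vanishing coordinates, where only the full index set survives. The only difference is that you make explicit the vanishing of the terms for proper subsets, which the paper leaves implicit.
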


\begin{proof}
\begin{enumerate}
 \item By the inclusion-exclusion principle, 
 \begin{align*}
 \Kl_0^n(\psi; b_1, \ldots, b_n)
 &= \sum_{\begin{subarray}{c}x_1, \ldots , x_n \in \F_q\\ x_1^{b_1} \cdots x_n^{b_n} = 0 \end{subarray}} \psi(x_1 + \cdots + x_n)\\
 &= \sum_{\begin{subarray}{c}I \subseteq \{1, \ldots, n\}\\ I\neq \emptyset \end{subarray}} \sum_{\begin{subarray}{c}x_1, \ldots , x_n \in \F_q\\ x_i = 0, i \in I\end{subarray}} (-1)^{|I|-1}\psi(x_1 + \cdots + x_n) \\
 &= (-1)^{n-1}.
 \end{align*} 
 \item
 We have 
 \begin{align*}
 \sum_{a \in \F_q} \Kl_a^n(\psi; b_1, \ldots, b_n) 
 &= \sum_{x_1, \ldots , x_n \in \F_q} \psi(x_1 + \cdots + x_n) \\
 &= \Biggl( \sum_{t \in \F_q} \psi(t) \Biggr)^n \\
 &=0.
 \end{align*}
 \item
 We have 
 \begin{align*}
 \sum_{a \in \F_q^{\times}} \chi(a) \Kl_a^n(\psi; b_1, \ldots, b_n)
 &= \sum_{x_1, \ldots , x_n \in \F_q^{\times}} \chi(x_1^{b_1} \cdots x_n^{b_n}) \psi(x_1 + \cdots + x_n)\\
 &= \prod_{i=1}^n \Biggl( \sum_{x_i \in \F_q^{\times}} \chi(x_i^{b_i}) \psi(x_i)\Biggr)\\
 &= \prod_{i=1}^n G(\chi^{b_i}, \psi).
 \end{align*}
\end{enumerate}
\end{proof}

\begin{cor}\label{cor:Kl}
\begin{enumerate}
 \item The sum $\Kl_a^n(\psi; b_1, \ldots, b_n)$ is not constant on $a \in \F_q^{\times}$.
 \item For some $a \in \F_q^{\times}$, $\Kl_a^n(\psi; b_1, \ldots, b_n) \neq 0$． 
\end{enumerate}
\end{cor}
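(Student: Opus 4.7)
The plan is to deduce both assertions from Proposition \ref{prop:Kl}(3) together with the non-vanishing of Gauss sums in Proposition \ref{prop:Gauss}, via a short contradiction argument.

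For (1), I will argue by contradiction: suppose $\Kl_a^n(\psi; b_1, \ldots, b_n) = C$ is constant in $a \in \F_q^{\times}$. Pick any nontrivial multiplicative character $\chi \colon \F_q^{\times} \ra \C^{\times}$; such a $\chi$ exists because $p$ is odd, hence $q \geq 3$. On one hand, pulling $C$ out of the sum gives
\[
\sum_{a \in \F_q^{\times}} \chi(a) \Kl_a^n(\psi; b_1, \ldots, b_n) = C \sum_{a \in \F_q^{\times}} \chi(a) = 0,
\]
since the character sum of a nontrivial $\chi$ over $\F_q^\times$ vanishes. On the other hand, Proposition \ref{prop:Kl}(3) identifies this quantity with $\prod_{i=1}^n G(\chi^{b_i}, \psi)$, and Proposition \ref{prop:Gauss} guarantees each factor is nonzero (equal to $-1$ if $\chi^{b_i}$ is trivial, and of absolute value $\sqrt{q}$ otherwise). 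This contradiction proves (1).

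For (2), note that it is an immediate consequence of (1): if $\Kl_a^n(\psi; b_1, \ldots, b_n) = 0$ for every $a \in \F_q^{\times}$, then the function is the constant $0$, contradicting (1).

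There is no real obstacle here; the only point worth checking is the existence of a nontrivial multiplicative character $\chi$ on $\F_q^{\times}$, which is ensured by the standing hypothesis $p \neq 2$ (so $q \geq 3$ and $\F_q^{\times}$ is nontrivial).
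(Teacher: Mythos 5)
Your proof is correct and takes essentially the same route as the paper: the paper also argues by contradiction, applying Proposition \ref{prop:Kl}(3) with a nontrivial $\chi$ and invoking Proposition \ref{prop:Gauss} to obtain $\prod_{i=1}^n G(\chi^{b_i},\psi)\neq 0$. You simply spell out the intermediate step (that a constant Kloosterman sum makes $\sum_a\chi(a)\Kl_a$ vanish) and the deduction of (2) from (1), both of which the paper leaves implicit.
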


\begin{proof}
Assume that the sum $\Kl_a^n(\psi; b_1, \ldots, b_n)$ is constant on $a \in \F_q^{\times}$.
If we take a nontrivial multiplicative character $\chi$, then by Proposition \ref{prop:Kl} (3) we have 
\[
\prod_{i=1}^n G(\chi^{b_i}, \psi)=0.
\]
However this contradicts Proposition \ref{prop:Gauss}.
Thus we get (1), and (2) immediately follows from (1).
\end{proof}

The following proposition is a slight generalization of {\cite[Lemma 3.4]{MR3843393}}:
\begin{prop}\label{prop:Fourier}
Let $a, b \in \F_q^{\times}$.
If \[
\Kl_{ta}^n(\psi; b_1, \ldots, b_n) = \Kl_{tb}^n(\psi; b_1, \ldots, b_n)
\] for every $t \in \F_q^{\times}$, 
then we have $a=b$.
\end{prop}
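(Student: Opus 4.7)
The plan is to apply the Mellin transform (summation against multiplicative characters) to both sides of the given identity and use the explicit formula in Proposition \ref{prop:Kl} (3) relating such sums to products of Gauss sums. The nonvanishing of Gauss sums (Proposition \ref{prop:Gauss}) will then let us conclude by orthogonality of characters of $\F_q^{\times}$.

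More precisely, for any multiplicative character $\chi \colon \F_q^{\times} \to \C^{\times}$, I would multiply the hypothesized equality $\Kl_{ta}^n(\psi; b_1, \ldots, b_n) = c \cdot \Kl_{tb}^n(\psi; b_1, \ldots, b_n)$ by $\chi(t)$ and sum over $t \in \F_q^{\times}$. After the substitution $s = ta$ on the left-hand side and $s = tb$ on the right-hand side, Proposition \ref{prop:Kl} (3) gives
\[
\chi(a)^{-1} \prod_{i=1}^{n} G(\chi^{b_i}, \psi) = c \cdot \chi(b)^{-1} \prod_{i=1}^{n} G(\chi^{b_i}, \psi).
\]
Since each Gauss sum is nonzero by Proposition \ref{prop:Gauss}, the product may be cancelled, yielding
\[
\chi(a)^{-1} = c \cdot \chi(b)^{-1} \quad \text{for every character } \chi \text{ of } \F_q^{\times}.
\]

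Taking $\chi$ to be the trivial character immediately gives $c = 1$. With $c = 1$ in hand, the relation $\chi(a) = \chi(b)$ for every $\chi \in \widehat{\F_q^{\times}}$, combined with the fact that the characters separate points of $\F_q^{\times}$, forces $a = b$. No step is truly hard here; the only thing to be careful about is that the Mellin transform manipulation is legitimate, which is ensured by the nonvanishing of Gauss sums. Thus Proposition \ref{prop:Fourier} follows directly from Propositions \ref{prop:Gauss} and \ref{prop:Kl}.
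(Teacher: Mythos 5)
Your proof is correct and follows essentially the same Mellin-transform (Fourier analysis on $\F_q^{\times}$) strategy as the paper, using Proposition~\ref{prop:Kl}(3) together with the nonvanishing of Gauss sums; the only cosmetic difference is that you invoke Proposition~\ref{prop:Kl}(3) uniformly for all characters including the trivial one, while the paper handles the trivial-character step (to get $c=1$) via parts (1) and (2) of that proposition instead.
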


\begin{proof}
%
We may assume $b=1$.
It suffices to show that if $a \neq 1$, then there exists $t \in \F_q^{\times}$ such that $\Kl_{ta}^n(\psi; b_1, \ldots, b_n) \neq \Kl_{t}^n(\psi; b_1, \ldots, b_n)$.
Let us assume $a\neq1$, then we can take a multiplicative character $\chi$ satisfying $\chi(a) \neq 1$.
Then we have 
\begin{align*}
&\sum_{t \in \F_q^{\times}} \chi(t) \bigl( \Kl_{ta}^n(\psi; b_1, \ldots, b_n) - \Kl_{t}^n(\psi; b_1, \ldots, b_n) \bigr) \\
&= \bigl(\chi(a)^{-1} - 1\bigr) \sum_{t \in \F_q^{\times}} \chi(t) \Kl_t^n(\psi; b_1, \ldots, b_n) \\
&= \bigl(\chi(a)^{-1} - 1\bigr) \prod_{i=1}^n G(\chi^{b_i}, \psi) \\
&\neq 0.
\end{align*}
Hence $\Kl_{ta}^n(\psi; b_1, \ldots, b_n) \neq \Kl_{t}^n(\psi; b_1, \ldots, b_n)$ for some $t \in \F_q^{\times}$.
\end{proof}
%
%
%


\begin{thebibliography}{M{\oe}g14}

\bibitem[Adr16]{MR3518182}
M.~Adrian, \emph{The {L}anglands parameter of a simple supercuspidal
  representation: odd orthogonal groups}, J. Ramanujan Math. Soc. \textbf{31}
  (2016), no.~2, 195--214.

\bibitem[Art13]{MR3135650}
J.~Arthur, \emph{The endoscopic classification of representations: Orthogonal
  and symplectic groups}, American Mathematical Society Colloquium
  Publications, vol.~61, American Mathematical Society, Providence, RI, 2013.

\bibitem[BH05]{MR2148193}
C.~J. Bushnell and G.~Henniart, \emph{The essentially tame local {L}anglands
  correspondence. {II}. {T}otally ramified representations}, Compos. Math.
  \textbf{141} (2005), no.~4, 979--1011.

\bibitem[BT72]{MR0327923}
F.~Bruhat and J.~Tits, \emph{Groupes r\'eductifs sur un corps local}, Inst.
  Hautes \'Etudes Sci. Publ. Math. (1972), no.~41, 5--251.

\bibitem[BT84]{MR756316}
\bysame, \emph{Groupes r\'eductifs sur un corps local. {II}. {S}ch\'emas en
  groupes. {E}xistence d'une donn\'ee radicielle valu\'ee}, Inst. Hautes
  \'Etudes Sci. Publ. Math. (1984), no.~60, 197--376.

\bibitem[Clo87]{MR889110}
L.~Clozel, \emph{Characters of nonconnected, reductive {$p$}-adic groups},
  Canad. J. Math. \textbf{39} (1987), no.~1, 149--167.

\bibitem[DR09]{MR2480618}
S.~DeBacker and M.~Reeder, \emph{Depth-zero supercuspidal {$L$}-packets and
  their stability}, Ann. of Math. (2) \textbf{169} (2009), no.~3, 795--901.

\bibitem[GR10]{MR2730575}
B.~H. Gross and M.~Reeder, \emph{Arithmetic invariants of discrete {L}anglands
  parameters}, Duke Math. J. \textbf{154} (2010), no.~3, 431--508.

\bibitem[HC70]{MR0414797}
Harish-Chandra, \emph{Harmonic analysis on reductive {$p$}-adic groups},
  Lecture Notes in Mathematics, Vol. 162, Springer-Verlag, Berlin-New York,
  1970, Notes by G. van Dijk.

\bibitem[Hen00]{MR1738446}
G.~Henniart, \emph{Une preuve simple des conjectures de {L}anglands pour {${\rm
  GL}(n)$} sur un corps {$p$}-adique}, Invent. Math. \textbf{139} (2000),
  no.~2, 439--455.

\bibitem[HR08]{MR2435422}
T.~Haines and M.~Rapoport, \emph{On parahoric subgroups, {A}ppendix to: Twisted
  loop groups and their affine flag varieties by {G}. {P}appas and {M}.
  {R}apoport}, Adv. Math. \textbf{219} (2008), no.~1, 118--198.

\bibitem[HT01]{MR1876802}
M.~Harris and R.~Taylor, \emph{The geometry and cohomology of some simple
  {S}himura varieties}, Annals of Mathematics Studies, vol. 151, Princeton
  University Press, Princeton, NJ, 2001, With an appendix by Vladimir G.
  Berkovich.

\bibitem[IT15]{Imai:2015aa}
N.~Imai and T.~Tsushima, \emph{Local {G}alois representations of {S}wan
  conductor one}, preprint, \url{arXiv:1509.02960}, 2015.

\bibitem[IT18]{MR3843393}
\bysame, \emph{Local {J}acquet--{L}anglands correspondences for simple
  supercuspidal representations}, Kyoto J. Math. \textbf{58} (2018), no.~3,
  623--638.

\bibitem[Kal15]{MR3402796}
T.~Kaletha, \emph{Epipelagic {$L$}-packets and rectifying characters}, Invent.
  Math. \textbf{202} (2015), no.~1, 1--89.

\bibitem[Kat88]{MR955052}
N.~M. Katz, \emph{Gauss sums, {K}loosterman sums, and monodromy groups}, Annals
  of Mathematics Studies, vol. 116, Princeton University Press, Princeton, NJ,
  1988.

\bibitem[Kot97]{MR1485921}
R.~E. Kottwitz, \emph{Isocrystals with additional structure. {II}}, Compositio
  Math. \textbf{109} (1997), no.~3, 255--339.

\bibitem[KS99]{MR1687096}
R.~E. Kottwitz and D.~Shelstad, \emph{Foundations of twisted endoscopy},
  Ast\'erisque (1999), no.~255, vi+190.

\bibitem[LH17]{MR3632513}
B.~Lemaire and G.~Henniart, \emph{Repr\'esentations des espaces tordus sur un
  groupe r\'eductif connexe {$\mathfrak{p}$}-adique}, Ast\'erisque (2017),
  no.~386, ix+366.

\bibitem[Li13]{MR3067291}
W.-W. Li, \emph{On a pairing of {G}oldberg-{S}hahidi for even orthogonal
  groups}, Represent. Theory \textbf{17} (2013), 337--381.

\bibitem[Mie16]{Mieda:2016}
Y.~Mieda, \emph{Parity of the {L}anglands parameters of conjugate self-dual
  representations of $\rm {GL}$$(n)$ and the local {J}acquet-{L}anglands
  correspondence}, preprint, \url{arXiv:1603.04691}, 2016.

\bibitem[M{\oe}g14]{MR3220932}
C.~M{\oe}glin, \emph{Paquets stables des s\'eries discr\`etes accessibles par
  endoscopie tordue; leur param\`etre de {L}anglands}, Automorphic forms and
  related geometry: assessing the legacy of {I}. {I}. {P}iatetski-{S}hapiro,
  Contemp. Math., vol. 614, Amer. Math. Soc., Providence, RI, 2014,
  pp.~295--336.

\bibitem[Mok15]{MR3338302}
C.~P. Mok, \emph{Endoscopic classification of representations of quasi-split
  unitary groups}, Mem. Amer. Math. Soc. \textbf{235} (2015), no.~1108, vi+248.

\bibitem[Oi16]{Oi:2016aa}
M.~Oi, \emph{Endoscopic lifting of simple supercuspidal representations of
  unramified $\rm {U}$$({N})$ to $\rm {GL}$$({N})$}, preprint,
  \url{arXiv:1603.08316v2}, 2016.

\bibitem[RY14]{MR3164986}
M.~Reeder and J.-K. Yu, \emph{Epipelagic representations and invariant theory},
  J. Amer. Math. Soc. \textbf{27} (2014), no.~2, 437--477.

\bibitem[Sal88]{MR1039842}
P.~J. Sally, Jr., \emph{Some remarks on discrete series characters for
  reductive {$p$}-adic groups}, Representations of {L}ie groups, {K}yoto,
  {H}iroshima, 1986, Adv. Stud. Pure Math., vol.~14, Academic Press, Boston,
  MA, 1988, pp.~337--348.

\bibitem[SS70]{MR0268192}
T.~A. Springer and R.~Steinberg, \emph{Conjugacy classes}, Seminar on
  {A}lgebraic {G}roups and {R}elated {F}inite {G}roups ({T}he {I}nstitute for
  {A}dvanced {S}tudy, {P}rinceton, {N}.{J}., 1968/69), Lecture Notes in
  Mathematics, Vol. 131, Springer, Berlin, 1970, pp.~167--266.

\bibitem[Tit79]{MR546588}
J.~Tits, \emph{Reductive groups over local fields}, Automorphic forms,
  representations and {$L$}-functions ({P}roc. {S}ympos. {P}ure {M}ath.,
  {O}regon {S}tate {U}niv., {C}orvallis, {O}re., 1977), {P}art 1, Proc. Sympos.
  Pure Math., XXXIII, Amer. Math. Soc., Providence, R.I., 1979, pp.~29--69.

\bibitem[Wal10]{MR2672539}
J.-L. Waldspurger, \emph{Les facteurs de transfert pour les groupes classiques:
  un formulaire}, Manuscripta Math. \textbf{133} (2010), no.~1-2, 41--82.

\bibitem[Wei49]{MR0029393}
A.~Weil, \emph{Numbers of solutions of equations in finite fields}, Bull. Amer.
  Math. Soc. \textbf{55} (1949), 497--508.

\bibitem[Xu17]{MR3713922}
B.~Xu, \emph{On the cuspidal support of discrete series for {$p$}-adic
  quasisplit {$Sp(N)$} and {$SO(N)$}}, Manuscripta Math. \textbf{154} (2017),
  no.~3-4, 441--502.

\end{thebibliography}

\end{document}